\documentclass{article}
\usepackage[T1]{fontenc}
\usepackage{amsmath}
\usepackage{amsthm}
\usepackage{amsfonts}
\usepackage{microtype}
\usepackage{tikz}
\usepackage{tabularx}
\usepackage[normalem]{ulem}
\usepackage{paralist}
\usepackage{caption}
\usepackage{hyperref}
\newcommand\dv{\hspace{1pt}{:}\hspace{1pt}}

\newtheorem{theorem}{Theorem}[section]
\newtheorem{conjecture}[theorem]{Conjecture}
\newtheorem{lemma}[theorem]{Lemma}
\newtheorem{corollary}[theorem]{Corollary}
\newtheorem{proposition}[theorem]{Proposition}
\newtheorem{question}{Question}

\theoremstyle{definition}
\newtheorem{example}[theorem]{Example}

\newcommand{\defn}[1]{\emph{#1}}

\title{Relating tournaments and permutations with xrays}
\author{Matthew Davis\thanks{Muskingum University, 260 Stadium Dr., New Concord, OH 43762, USA} \and Michael W. Schroeder\thanks{Stetson University, 421 N. Woodland Dr., DeLand, FL 32723, USA}~\thanks{Corresponding author:  \tt\href{mailto:mschroeder1@stetson.edu}{mschroeder1@stetson.edu}}}
\date{}

\begin{document}
\maketitle

\begin{abstract}
In a 2005 paper, Bebeacua et al.~investigated the xrays of permutations, and conjectured a correspondence between binary xrays and score sequences of tournaments.
In 2014, Brualdi and Fritscher conjectured a possible correspondence between score sequences of $2$-tournaments and (not necessarily binary) xrays of permutations.
In this paper, we first introduce the concept of a transitive tournament decomposition of $k$-tournaments, then present a construction by which a permutation is used to build $1$- and $2$-tournaments whose score sequences agree with the xray of the permutation in the manner outlined by Bebeacua et al.~and Brualdi and Fritscher.
We close with an investigation of xrays with restricted terms, including binary xrays, and show that the recent conjectures by Bebeacua et al.~and Brualdi and Fritscher are special cases of a more general statement, which we conjecture and for which we provide supporting evidence.
\end{abstract}

\section{Introduction}
\label{sec:intro}

In this paper, we investigate the relationship between tournaments and permutations, as well as their related structures.
Let $T$ be a tournament graph on a vertex set $V$ with $n$ vertices for some positive integer $n$.
For each $v\in V$, let $s(v)$ be the outdegree of $v$; then $S(T) = \{ s(v): v\in V\}$ is the \defn{score set} of $T$ and the \defn{score sequence} of $T$, denoted $s(T)$, is the nondecreasing sequence formed from the elements of $S(T)$.
Let $Z_n = \{0,\dots,n-1\}$, and for all tournaments on $n$ vertices in this work, we assume $Z_n$ is the vertex set, unless noted otherwise.
Let $\mathcal{T}_n$ be the set of tournaments on $Z_n$ and $\mathfrak{S}_n$ be the set of score sequences of tournaments in $\mathcal{T}_n$.
Recall that Landau's theorem, a classic result in constructive combinatorics, provides a classification for when an integer sequence is a score sequence of a tournament, and thus characterizes the elements of $\mathfrak{S}_{n}$.


Now, let $\phi\in S_n$ be a permutation on $Z_n$. 
The \defn{xray} of $\phi$ is the sequence of the sums of the entries along each antidiagonal of the associated permutation matrix to $\phi$.
Define the \defn{characteristic set} of $\phi$, denoted $C(\phi)$, as $\{i+\phi(i):i\in Z_n\}$\footnote{In previous works, the characteristic set is defined as a translation of this set, but using this definition allows for the simplification of several results.}, which has $n$ elements and may be a multiset.
Let $c(\phi)$ be a nondecreasing sequence formed by the elements of $C(\phi)$; call this the \defn{characteristic sequence} of $\phi$.
The characteristic sequences of permutations on $Z_n$ are in one-to-one correspondence with the xrays of permutations on $Z_n$; if $C(\phi)$ is simple (not a multiset), that is, if $c(\phi)$ is increasing, then the xray of $\phi$ is \defn{binary}.
Let $\mathfrak{C}_n$ be the set of characteristic sequences of permutations on $Z_n$, and let $\overline{\mathfrak{C}}_n$ be the subset of $\mathfrak{C}_n$ which contains increasing sequences.

A set of necessary numerical conditions exists for an increasing integer sequence to be a possible characteristic sequence of a permutation on $Z_n$. 
These conditions are essentially equivalent to those in Landau's Theorem, so that the number of integer sequences which satisfy these conditions equals $|\mathfrak{S}_n|$. 
This led Bebeacua et al.~to the following theorem:

\begin{theorem}
\label{thm:bebeacua}
For all positive integers $n$, $|\overline{\mathfrak{C}}_n| \leq |\mathfrak{S}_n|$.
\end{theorem}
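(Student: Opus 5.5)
We inject $\overline{\mathfrak{C}}_n$ into the set of integer sequences satisfying Landau's conditions, which by Landau's theorem is exactly $\mathfrak{S}_n$.

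\textbf{Step 1: the necessary conditions.} Let $\phi\in S_n$ with $c(\phi)=(c_0<c_1<\dots<c_{n-1})$. We first record the analogue of Landau's inequalities for $c(\phi)$. Summing over all $i\in Z_n$ gives
\[
\sum_{j=0}^{n-1} c_j=\sum_i i+\sum_i\phi(i)=2\binom{n}{2}.
\]
For any $k$-element set $I\subseteq Z_n$, both $\sum_{i\in I} i$ and $\sum_{i\in I}\phi(i)$ are sums of $k$ distinct elements of $Z_n$, so each is at least $\binom{k}{2}$. Applying this to the $k$ indices realizing the $k$ smallest values of $C(\phi)$ gives
\[
\sum_{j=0}^{k-1} c_j\ \ge\ 2\binom{k}{2}\qquad\text{for } 1\le k\le n.
\]

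\textbf{Step 2: the map to score sequences.} Define $t_j=c_j-j$. Since the $c_j$ are strictly increasing integers, $t$ is nondecreasing, and the map $c\mapsto t$ is clearly injective. Using $\sum_{j<k} j=\binom{k}{2}$, Step 1 becomes
\[
\sum_{j<k} t_j\ge \binom{k}{2}\ \text{ for all } k,\qquad \sum_{j<n}t_j=\binom{n}{2}.
\]
The first condition at $k=1$ gives $t_0\ge 0$. These are exactly Landau's conditions, so $t$ is the score sequence of some tournament on $n$ vertices, i.e.\ $t\in\mathfrak{S}_n$. Hence $c\mapsto t$ is an injection $\overline{\mathfrak{C}}_n\to\mathfrak{S}_n$, and $|\overline{\mathfrak{C}}_n|\le|\mathfrak{S}_n|$.

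\textbf{Main obstacle.} The only delicate point is the partial-sum inequality in Step 1. It must be argued via the index set $I$ of the $k$ smallest characteristic values, rather than termwise. Once that is in place, the translation $c_j\mapsto c_j-j$ matching the strictness of $c$ with the nondecreasing property of scores is routine.
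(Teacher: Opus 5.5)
Your proof is correct and is essentially the paper's argument: Step 1 is Lemma~\ref{lem:xrayconditions}, proved by the same index-set reasoning, and your shift $c_j\mapsto c_j-j$ is exactly the paper's map $\alpha$. The paper applies $\alpha$ only after placing $\overline{\mathfrak{C}}_n$ inside $\overline{\mathfrak{S}_n^2}$ via the $k=2$ Landau theorem, whereas you compose the two steps and invoke the ordinary Landau theorem directly, a slightly leaner presentation of the same idea.
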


With numerical supporting evidence (up through at least $n=12$), this allowed the following conjecture:
\begin{conjecture}
\label{conj:bebeacua}
For all positive integers $n$, $|\overline{\mathfrak{C}}_n| = |\mathfrak{S}_n|$.
\end{conjecture}

Since their paper in 2005, there has been modest headway in finding a proof of Conjecture \ref{conj:bebeacua}.
In 2014, Brualdi and Fritscher~\cite{brualdi} explored several aspects of xrays of permutations, including a correlation with score sequences of $2$-tournaments.
For a positive integer $k$, let $\mathcal{T}_n^k$ denote the set of $k$-tournaments on $Z_n$ and $\mathfrak{S}_n^k$ denote the set of score sequences of $k$-tournaments in $\mathcal{T}_n^k$.
In their paper, Brualdi and Fritscher prove the following theorem.

\begin{theorem}
\label{thm:brualdi}
For all positive integers $n$, $\mathfrak{C}_n \subseteq \mathfrak{S}^2_n$.
\end{theorem}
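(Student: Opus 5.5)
The plan is a direct construction: realize each entry $i+\phi(i)$ of $C(\phi)$ as the score of vertex $i$ in a $2$-tournament obtained by superimposing two transitive $1$-tournaments on $Z_n$. No Landau-type characterization is needed. Throughout I take a $2$-tournament on $Z_n$ to be a multidigraph with exactly two arcs between each pair of distinct vertices, each arc oriented independently, where the score of a vertex is its outdegree.

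First, let $T_1$ be the transitive tournament on $Z_n$ in which $i\to j$ exactly when $i>j$. Vertex $i$ then beats precisely the $i$ vertices $0,\dots,i-1$, so its score is $s_{T_1}(i)=i$.

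Second, let $T_2$ be the transitive tournament on $Z_n$ ordered by $\phi$: $i\to j$ exactly when $\phi(i)>\phi(j)$. This is a tournament because $\phi$ is injective, so exactly one of $\phi(i)>\phi(j)$ or $\phi(j)>\phi(i)$ holds for each pair. Since $\phi$ is a bijection onto $Z_n$, the vertices beaten by $i$ are those with $\phi$-values in $\{0,\dots,\phi(i)-1\}$, so $s_{T_2}(i)=\phi(i)$.

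Third, let $T$ be the multidigraph on $Z_n$ whose arc multiset is the union of the arcs of $T_1$ and $T_2$. Each pair of distinct vertices receives exactly one arc from each tournament, so $T$ has exactly two arcs per pair and is a $2$-tournament. Outdegrees add, so $s_T(i)=i+\phi(i)$. The multiset of scores of $T$ is therefore exactly $C(\phi)$, and $s(T)=c(\phi)$. Hence $c(\phi)\in\mathfrak{S}^2_n$ for every $\phi\in S_n$, which gives $\mathfrak{C}_n\subseteq\mathfrak{S}^2_n$.

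I do not expect a real obstacle. The only point needing care is the convention for $2$-tournaments: the union must count multiplicities and allow both arcs of a pair to point the same way. This is exactly what the $2$-tournament definition permits, and it is the source of repeated scores, i.e.\ non-binary xrays.

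As a fallback, one could instead verify the Landau-type conditions for $2$-tournaments. The total is $\sum_i (i+\phi(i)) = n(n-1)$. For any $k$ indices, the values $i$ are distinct and the values $\phi(i)$ are distinct, so the sum of any $k$ entries of $C(\phi)$ is at least $2\binom{k}{2}=k(k-1)$. The construction above avoids relying on that theorem and matches the transitive tournament decomposition viewpoint the paper introduces.
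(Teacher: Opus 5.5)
Your proposal is correct and is essentially the paper's own argument: the $2$-tournament $T(\epsilon)+T(\phi)$ built in Lemma~\ref{lem:2tourn} (denoted $T^2(\phi)$ there) has $s_T(i)=i+\phi(i)$, so $s(T)=c(\phi)$. Your fallback is the paper's other route, namely Lemma~\ref{lem:xrayconditions} combined with Theorem~\ref{thm:klandau} for $k=2$.
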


Given its similarity to the correspondence presented by Bebeacua et al., this led Brualdi and Fritscher to conjecture that $\mathfrak{C}_n = \mathfrak{S}^2_n$ as well.
However, a 2008 result by Nordh~\cite{nordh1} shows this conjecture to be false by finding a parallel between $\mathfrak{C}_n$ and perfect extremal Skolem sets.
Furthermore, Yu~\cite{Yu} showed that the problem of producing a permutation which yields a potential xray is NP-complete, so finding a constructive argument may be difficult.
However, the complexity of certain variations of this problem are not known.
It was noted by Brunetti et al.~\cite{Brunetti} that it is not known whether producing a permutation which yields a potential binary xray is NP-complete.
Del Lungo~\cite{Lungo} showed that a polynomial-time algorithm exists if one requires the associated permutation matrix be wrapped around a cylinder.
This parallels a similar process given by Marshall Hall Jr.~\cite{HallJr} for solving a related problem involving Abelian groups.

Theorems \ref{thm:bebeacua} and \ref{thm:brualdi} involve showing correspondences between the elements of $\overline{\mathfrak{C}}_n$ and $\mathfrak{S}_n$, and between the elements of $\mathfrak{C}_n$ and $\mathfrak{S}^2_n$, respectively.
None of the arguments focus on finding relationships between the elements in $S_n$ and $\mathcal{T}_n$ or $\mathcal{T}_n^2$.
We first prove there is a correspondence between permutations and $2$-tournaments that respects the correspondence between xrays and characteristic sequences with score sequences.

\begin{theorem}
\label{thm:new_thm}
Let $n$ be a positive integer.
There exists a function $\gamma: S_n \to \mathcal{T}_n^2$ such that $c = s\circ \gamma$.
\end{theorem}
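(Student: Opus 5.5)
The plan is to build $\gamma(\phi)$ as the superposition of two transitive $1$-tournaments on $Z_n$: one that realizes the scores $i$ and one that realizes the scores $\phi(i)$. This works because a transitive tournament on $n$ vertices has score sequence $(0,1,\dots,n-1)$, and the characteristic set is exactly $\{i+\phi(i) : i\in Z_n\}$. Throughout, a $2$-tournament on $Z_n$ is a multidigraph in which every unordered pair $\{u,v\}$ of distinct vertices is joined by exactly two arcs, each oriented one way or the other. The score of a vertex is its total outdegree. The union (as a multiset of arcs) of any two tournaments on $Z_n$ is therefore a $2$-tournament, and the score of a vertex in the union is the sum of its scores in the two tournaments.

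Concretely, fix $\phi\in S_n$ and define two tournaments on $Z_n$.
\begin{itemize}
\item $T_1$ is the transitive tournament in which $i\to j$ if and only if $i>j$.
\item $T_\phi$ is the transitive tournament in which $i\to j$ if and only if $\phi(i)>\phi(j)$.
\end{itemize}
Both are well defined tournaments because $\phi$ is a bijection, so for $i\neq j$ exactly one of $\phi(i)>\phi(j)$ and $\phi(j)>\phi(i)$ holds. In $T_1$, vertex $i$ beats precisely the $i$ vertices $0,\dots,i-1$, so its score is $i$. In $T_\phi$, vertex $i$ beats precisely the vertices $j$ with $\phi(j)<\phi(i)$. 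Since $\phi$ is a bijection onto $Z_n$, there are exactly $\phi(i)$ such vertices, so its score is $\phi(i)$.

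Set $\gamma(\phi)=T_1\cup T_\phi$, taking all arcs of both with multiplicity. Then every pair of vertices carries exactly two arcs, so $\gamma(\phi)\in\mathcal{T}^2_n$. The score of vertex $i$ in $\gamma(\phi)$ is $i+\phi(i)$. Hence the score multiset of $\gamma(\phi)$ equals $C(\phi)$, and sorting gives $s(\gamma(\phi))=c(\phi)$, which is the required identity $c=s\circ\gamma$.

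The only points needing care are definitional rather than technical. First, one must confirm that the paper's notion of a $2$-tournament permits the two games between a pair to go in the same direction, or in opposite directions, independently. Second, one must confirm that the score is the plain outdegree count, not a normalized version. If instead $2$-tournaments were defined with half-point scoring, one would rescale accordingly. As a consistency check, both sides have total $n(n-1)$, since $\sum_i (i+\phi(i)) = 2\binom{n}{2}$, and all values lie in $\{0,\dots,2n-2\}$. This construction also gives a second proof of Theorem~\ref{thm:brualdi}, and it is exactly a decomposition of $\gamma(\phi)$ into two transitive tournaments, which motivates the notion of transitive tournament decomposition.
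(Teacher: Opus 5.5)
Your construction is exactly the paper's: the proof of Lemma~\ref{lem:2tourn} takes $\gamma(\phi)=T^2(\phi)=T(\epsilon)+T(\phi)$, the sum of the transitive tournaments with score functions $i\mapsto i$ and $i\mapsto\phi(i)$. This gives $s_{\gamma(\phi)}(i)=i+\phi(i)$ and hence $S(\gamma(\phi))=C(\phi)$. Both of your definitional concerns come out as you assumed: the paper defines a $2$-tournament by weights with $\omega(x,y)+\omega(y,x)=2$ and the score as the plain row sum of the adjacency matrix, so your multiset union of arcs is the paper's sum $A_{T_1}+A_{T_\phi}$.
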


We then look at several consequences of this construction.
In Section \ref{sec:prelim}, we present necessary definitions and notation for our objects of study.
In Section \ref{sec:tournaments}, we give a construction for building a $2$-tournament from a permutation which demonstrates Theorem \ref{thm:new_thm}.
We introduce the concept of decomposing $k$-tournaments into the union of transitive tournaments, and show that the existence of a decomposition of a $2$-tournament into transitive tournaments is required for a correspondence between 
elements of $S_n$ and $\mathcal{T}_n^2$, and similarly between $\mathfrak{C}_n$ and $\mathfrak{S}_n^2$. 
We also consider the implications of this correspondence on the relationship between $\overline{\mathfrak{C}}_n$ and $\mathfrak{S}_n$, as highlighted in Conjecture \ref{conj:bebeacua}.

In Section \ref{sec:restricted}, we investigate score sequences of $2$-tournaments and characteristic sequences of permutations with restrictions on the multiplicity of their terms.
We conclude showing that  Conjecture \ref{conj:bebeacua} and another conjecture by Bebeacua et al.~are special cases of a more comprehensive statement.
We formulate this more general conjecture and provide supporting evidence for it.

\section{Preliminaries}
\label{sec:prelim}

In this section, we review the results and notation related to tournaments, permutations, and xrays.
Unless stated otherwise, a permutation in $S_n$ is assumed to act on $Z_n$.

\subsection{Tournaments}

Let $n$ be a positive integer, and let $T$ be a digraph on $n$ vertices.
Then $T$ is a \defn{tournament} if for each distinct pair $x,y\in V(T)$, either the directed edge $(x,y)\in E(T)$ or $(y,x)\in E(T)$, but not both.
This language aligns with recording the results of a round-robin tournament -- a tournament in which each pair of players compete and each contest results in a winner and loser; there are no ties.
We can produce a tournament by including the edge $(x,y)\in E(T)$ if and only if $x$ beats $y$ in their game.
We use the notation $x >_T y$ to mean $(x,y)\in E(T)$, or equivalently, $x$ beats $y$ in $T$.

Let $A_T$ be the adjacency matrix of $T$, where the rows and columns of $A_T$ are indexed by $Z_n$.
Observe that $A_T+A_T' = J_n-I_n$, where $A_T'$ is the transpose of $A_T$, $I_n$ is the identity matrix of order $n$, and $J_n$ is the $n\times n$ matrix of all 1s.
\begin{figure}
\centering
\begin{tabular}{@{}cc@{\qquad\qquad}cc@{}}
\begin{tikzpicture}[line width=1pt,scale=1.6]
\coordinate (0) at (0,0);
\coordinate (3) at (0,1);
\coordinate (1) at (1,0);
\coordinate (2) at (1,1);
\node[circle,draw,minimum width=13pt] (a0) at (0) {};
\node[circle,draw,minimum width=13pt] (a1) at (1) {};
\node[circle,draw,minimum width=13pt] (a2) at (2) {};
\node[circle,draw,minimum width=13pt] (a3) at (3) {};
\node at (0) {0};
\node at (1) {1};
\node at (2) {2};
\node at (3) {3};
\draw[->] (a3) -- (a0);
\draw[->] (a3) -- (a2);
\draw[->] (a3) -- (a1);
\draw[->] (a0) -- (a2);
\draw[->] (a0) -- (a1);
\draw[->] (a2) -- (a1);
\end{tikzpicture}
&
\begin{tikzpicture}
\node at (0,0) {\begin{tabular}{|c|c|c|c|}
\hline
0 & 1 & 1 & 0\\\hline
0 & 0 & 0 & 0\\\hline
0 & 1 & 0 & 0\\\hline
1 & 1 & 1 & 0\\\hline
\end{tabular}};
\end{tikzpicture}
&
\begin{tikzpicture}[line width=1pt,scale=1.6]
\coordinate (0) at (0,0);
\coordinate (3) at (0,1);
\coordinate (1) at (1,0);
\coordinate (2) at (1,1);
\node[circle,draw,minimum width=13pt] (a0) at (0) {};
\node[circle,draw,minimum width=13pt] (a1) at (1) {};
\node[circle,draw,minimum width=13pt] (a2) at (2) {};
\node[circle,draw,minimum width=13pt] (a3) at (3) {};
\node at (0) {0};
\node at (1) {1};
\node at (2) {2};
\node at (3) {3};
\draw[->] (a0) -- (a1);
\draw[->] (a0) -- (a3);
\draw[->] (a1) -- (a2);
\draw[->] (a2) -- (a0);
\draw[->] (a3) -- (a1);
\draw[->] (a3) -- (a2);
\end{tikzpicture}
&
\begin{tikzpicture}
\node at (0,0) {\begin{tabular}{|c|c|c|c|}
\hline
0 & 1 & 0 & 1\\\hline
0 & 0 & 1 & 0\\\hline
1 & 0 & 0 & 0\\\hline
0 & 1 & 1 & 0\\\hline
\end{tabular}};
\end{tikzpicture}
\\
\multicolumn{2}{c}{(a)} &
\multicolumn{2}{c}{(b)} 
\end{tabular}
\caption[fragile]{\begin{tabularx}{.85\linewidth}[t]{@{}l@{\ }X@{}}
(a) & A transitive tournament and its adjacency matrix. \\
(b) & A non-transitive tournament and its adjacency matrix.
\end{tabularx}
}
\label{fig:tourn}
\end{figure}
See Figure \ref{fig:tourn} for two tournaments in $\mathcal{T}_4$, as well as their adjacency matrices.

Now, let $k$ and $n$ be positive integers.
Let $T$ be a complete digraph on $n$ vertices with an associated weight function $\omega_T:E(T)\to\mathbb{Z}^{\geq 0}$.
Then $T$ is a \defn{$k$-tournament} if $\omega(x,y)+\omega(y,x) = k$ for each pair $x,y\in V(T)$.
This is analogous to a method for recording the results of a round-robin tournament in which each pair of teams play $k$ games with no ties, and $\omega_T(x,y)$ is the number of games in which $x$ beats $y$ in the tournament.
Note that a $1$-tournament is equivalent to a tournament.
Similarly, we assume that $V(T)=Z_n$ unless stated otherwise, and let $A_T$ be the associated adjacency matrix for $T$ whose rows and columns are indexed by $Z_n$; that is $A_T(i,j) = \omega_T(i,j)$ for each distinct pair $i,j\in Z_n$.
Note that $A_T + A_T' = k(J_n-I_n)$.
An example of a $2$-tournament in $\mathcal{T}_4^2$ and its adjacency matrix is given in Figure \ref{fig:ktourn}(a).
\begin{figure}
\centering
\begin{tabular}{@{}ccc@{}}
\begin{tikzpicture}[line width=1pt,scale=1.2]
\coordinate (0) at (0,0);
\coordinate (3) at (0,1.3);
\coordinate (1) at (1,0);
\coordinate (2) at (1,1.3);
\node[circle,draw,minimum width=13pt] (a0) at (0) {};
\node[circle,draw,minimum width=13pt] (a1) at (1) {};
\node[circle,draw,minimum width=13pt] (a2) at (2) {};
\node[circle,draw,minimum width=13pt] (a3) at (3) {};
\node at (0) {0};
\node at (1) {1};
\node at (2) {2};
\node at (3) {3};
\draw[->,lightgray] (a1) to[bend right=10] (a0);
\draw[->,lightgray] (a1) to[bend  left=10] (a2);
\draw[->,lightgray] (a1) to[bend right=10] (a3);
\draw[->,lightgray] (a0) to[bend  left=10] (a2);
\draw[->,lightgray] (a0) to[bend right=10] (a3);
\draw[->,lightgray] (a3) to[bend right=10] (a2);
\draw[->] (a2) to[bend  left=10] (a0);
\draw[->] (a2) to[bend  left=10] (a1);
\draw[->] (a2) to[bend right=10] (a3);
\draw[->] (a0) to[bend right=10] (a1);
\draw[->] (a0) to[bend  left=10] (a3);
\draw[->] (a1) to[bend  left=10] (a3);
\end{tikzpicture}
\begin{tikzpicture}
\node at (0,0) {\begin{tabular}{|@{\ }c@{\ }|@{\ }c@{\ }|@{\ }c@{\ }|@{\ }c@{\ }|}
\hline
0 & 1 & 1 & 2\\\hline
1 & 0 & 1 & 2\\\hline
1 & 1 & 0 & 1\\\hline
0 & 0 & 1 & 0\\\hline
\end{tabular}};
\end{tikzpicture}
&
\begin{tikzpicture}[line width=1pt,scale=1.2]
\coordinate (0) at (0,0);
\coordinate (3) at (0,1.3);
\coordinate (1) at (1,0);
\coordinate (2) at (1,1.3);
\node[circle,draw,minimum width=13pt] (a0) at (0) {};
\node[circle,draw,minimum width=13pt] (a1) at (1) {};
\node[circle,draw,minimum width=13pt] (a2) at (2) {};
\node[circle,draw,minimum width=13pt] (a3) at (3) {};
\node at (0) {0};
\node at (1) {1};
\node at (2) {2};
\node at (3) {3};
\draw[->] (a2) to[bend left=10] (a0);
\draw[->] (a2) to[bend  left=10] (a1);
\draw[->] (a2) to[bend right=10] (a3);
\draw[->] (a0) to[bend right=10] (a1);
\draw[->] (a0) to[bend  left=10] (a3);
\draw[->] (a1) to[bend left=10] (a3);
\end{tikzpicture}
\begin{tikzpicture}
\node at (0,0) {\begin{tabular}{|@{\ }c@{\ }|@{\ }c@{\ }|@{\ }c@{\ }|@{\ }c@{\ }|}
\hline
0 & 1 & 0 & 1\\\hline
0 & 0 & 0 & 1\\\hline
1 & 1 & 0 & 1\\\hline
0 & 0 & 0 & 0\\\hline
\end{tabular}};
\end{tikzpicture}
&
\begin{tikzpicture}[line width=1pt,scale=1.2]
\coordinate (0) at (0,0);
\coordinate (3) at (0,1.3);
\coordinate (1) at (1,0);
\coordinate (2) at (1,1.3);
\node[circle,draw,minimum width=13pt] (a0) at (0) {};
\node[circle,draw,minimum width=13pt] (a1) at (1) {};
\node[circle,draw,minimum width=13pt] (a2) at (2) {};
\node[circle,draw,minimum width=13pt] (a3) at (3) {};
\node at (0) {0};
\node at (1) {1};
\node at (2) {2};
\node at (3) {3};
\draw[->,lightgray] (a1) to[bend right=10] (a0);
\draw[->,lightgray] (a1) to[bend  left=10] (a2);
\draw[->,lightgray] (a1) to[bend right=10] (a3);
\draw[->,lightgray] (a0) to[bend  left=10] (a2);
\draw[->,lightgray] (a0) to[bend right=10] (a3);
\draw[->,lightgray] (a3) to[bend right=10] (a2);
\end{tikzpicture}
\begin{tikzpicture}
\node at (0,0) {\begin{tabular}{|@{\ }c@{\ }|@{\ }c@{\ }|@{\ }c@{\ }|@{\ }c@{\ }|}
\hline
0 & 0 & 1 & 1\\\hline
1 & 0 & 1 & 1\\\hline
0 & 0 & 0 & 0\\\hline
0 & 0 & 1 & 0\\\hline
\end{tabular}};
\end{tikzpicture}
\\
(a) & (b) & (c)\end{tabular}
\caption{A $2$-tournament with its adjacency matrix (a), and two transitive tournaments with their adjacency matrices (b and c) which comprise a transitive tournament decomposition of the $2$-tournament.}
\label{fig:ktourn}
\end{figure}
We define the score set of $T$, denoted as $S(T)$, as the (multi)set of row sums of $A_T$; equivalently $S(T)$ is the (multi)set of outdegrees for the vertices in $T$.
Finally, we define the score sequence of $T$, denoted as $s(T)$, as the nondecreasing sequence formed from the elements of $S(T)$.
The score sequences of the tournaments given in Figures \ref{fig:tourn}(a), \ref{fig:tourn}(b), and \ref{fig:ktourn}(a), are  $(0,1,2,3)$, $(1,1,2,2)$, and $(1,3,4,4)$, respectively.

The following result by Landau characterizes the sequences in $\mathfrak{S}_n$.

\begin{theorem}
[Theorem 2.2.2, \cite{brualdicmc}]
\label{thm:landau}
Let $n$ be a positive integer and $s=(s_0,s_1,\dots,s_{n-1})$ be a sequence of length $n$ of nondecreasing integers.
Then $s\in \mathfrak{S}_n$ if and only if 
\begin{equation}
\sum_{i=1}^t s_i \geq \binom t2
\label{eq:landaucondition}
\end{equation}
for each $t\in\{1,\dots,n\}$, and achieves equality when $t=n$.
\end{theorem}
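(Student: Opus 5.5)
The proof has two directions. Necessity is a counting argument. Sufficiency I will obtain from Hall's marriage theorem, by recasting "choose a winner for every game" as a bipartite matching problem. Throughout I read the sum in \eqref{eq:landaucondition} as the sum of the $t$ smallest terms of $s$, i.e.\ $s_0+\dots+s_{t-1}$, which is the evident intent given the $0$-indexing.

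\emph{Necessity.} Suppose $s=s(T)$ for some $T\in\mathcal{T}_n$, and fix any set $X$ of $t$ vertices. The $\binom t2$ games played among vertices of $X$ each contribute exactly one win to some vertex of $X$. Hence $\sum_{v\in X}s(v)\ge\binom t2$. Taking $X$ to be the $t$ vertices with the smallest scores gives \eqref{eq:landaucondition}. Taking $X=Z_n$, every game is counted exactly once, so equality holds at $t=n$.

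\emph{Sufficiency.} Let $s$ be nondecreasing and satisfy the conditions; note they force $s_0\ge 0$. Build a bipartite graph $B$ as follows.
\begin{itemize}
\item One side $G$ consists of the $\binom n2$ unordered pairs $\{i,j\}$ of $Z_n$ (the games).
\item The other side $W$ consists of $s_i$ labelled copies of each vertex $i$ (the win slots).
\item A game $\{i,j\}$ is adjacent to every copy of $i$ and every copy of $j$.
\end{itemize}
A matching saturating $G$ assigns each game to a slot of one of its two players; declare that player the winner. Because $|W|=\sum s_i=\binom n2=|G|$, such a matching is perfect, so vertex $i$ wins exactly $s_i$ games and the resulting tournament has score sequence $s$.

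It remains to verify Hall's condition for every $H\subseteq G$. Let $X$ be the set of vertices appearing in some game of $H$. Then $N(H)$ is the set of all copies of vertices in $X$, so $|N(H)|=\sum_{v\in X}s_v$. Also $H$ is contained in the set of games among $X$, so $|H|\le\binom{|X|}2$. Since $s$ is nondecreasing, $\sum_{v\in X}s_v$ is at least the sum of the $|X|$ smallest terms, which by \eqref{eq:landaucondition} is at least $\binom{|X|}2\ge|H|$. So Hall's condition holds and the matching exists.

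The main point needing care is the reduction in the Hall step, from arbitrary game sets $H$ to the "complete" game set on $X$ and then to the $|X|$ smallest scores. Once that reduction is made, the hypothesis applies verbatim. If one wanted to avoid Hall's theorem, the fallback would be an induction removing the vertex of largest score, letting it lose to suitably chosen high-score vertices, and re-checking the inequalities. That route is more delicate, so I would use the matching argument.
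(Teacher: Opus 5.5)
The paper gives no proof of this theorem; it is quoted from \cite{brualdicmc}, so there is no in-paper argument to compare yours against. Your proof is correct and complete.

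\begin{itemize}
\item Reading the sum as $s_0+\cdots+s_{t-1}$ is the right interpretation. As printed, the case $t=n$ would refer to a nonexistent $s_n$.
\item Necessity is the standard count of the $\binom t2$ wins generated inside any $t$-set of vertices.
\item In the sufficiency direction, you correctly extract $s_i\ge 0$ from the case $t=1$ together with monotonicity. This is needed before you can form $s_i$ slots.
\item Since $|G|=|W|$, a matching saturating $G$ is perfect, so vertex $i$ wins exactly $s_i$ games.
\item The Hall chain $|H|\le\binom{|X|}2\le s_0+\cdots+s_{|X|-1}\le\sum_{v\in X}s_v=|N(H)|$ is valid for every nonempty $H$, because $1\le |X|\le n$.
\item Constraints such as $s_{n-1}\le n-1$ never need a separate check, since they are absorbed by Hall's condition.
\end{itemize}

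One by-product of your route is relevant to this paper. Replace each game $\{i,j\}$ by $k$ parallel game-vertices, each adjacent to all copies of $i$ and of $j$. The Hall bound becomes $|H|\le k\binom{|X|}2$, and the argument goes through verbatim. This yields Theorem~\ref{thm:klandau}, which is the form the paper actually relies on: in the remark after Lemma~\ref{lem:2tourn}, in Lemma~\ref{lem:ktok-1}, and in deducing $\overline{\mathfrak{C}}_n\subseteq\overline{\mathfrak{S}_n^2}$.

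What the matching argument does not provide is structure. The tournament it produces has no prescribed relation to transitive tournaments. The paper's own constructions (Lemma~\ref{lem:2tourn}, Theorem~\ref{thm:factoralpha}) are designed precisely to avoid invoking Landau's theorem and to retain that structure.
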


The previous theorem can be generalized to characterize the elements of $\mathfrak{S}_n^k$.

\begin{theorem}[Theorem 2.2.4, \cite{brualdicmc}]
\label{thm:klandau}
Let $n$ and $k$ be a positive integers and $s=(s_0,s_1,\dots,s_{n-1})$ be a sequence of length $n$ of nondecreasing integers.
Then $s\in\mathfrak{S}_n^k$ if and only if 
\begin{equation}
\sum_{i=1}^t s_i \geq k\cdot\binom t2
\label{eq:klandaucondition}
\end{equation}
for each $t\in\{1,\dots,n\}$, and achieves equality when $t=n$.
\end{theorem}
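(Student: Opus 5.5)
We prove the two directions separately. Necessity is a counting argument, and sufficiency is a network-flow (Hall-type) argument that realizes the prescribed scores by distributing the games. Throughout, we read the sum in \eqref{eq:klandaucondition} as the sum of the $t$ smallest terms of $s$.

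\emph{Necessity.} Suppose $s=s(T)$ for some $T\in\mathcal{T}_n^k$. Take any set $W$ of $t$ vertices. The $k\binom t2$ games played between pairs of vertices of $W$ each contribute one win to a vertex of $W$, so $\sum_{v\in W}s(v)\ge k\binom t2$. Choose $W$ to be the $t$ vertices with the smallest scores; this gives \eqref{eq:klandaucondition}. For $t=n$ every game is counted exactly once, so equality holds.

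\emph{Sufficiency.} Let $s$ be nondecreasing and satisfy \eqref{eq:klandaucondition}, with equality at $t=n$. Assign the score $s_i$ to vertex $i\in Z_n$ and build the following flow network:
\begin{itemize}
\item a source $\sigma$;
\item one node $p_{\{x,y\}}$ for each unordered pair of distinct vertices, joined from $\sigma$ by an arc of capacity $k$;
\item arcs of unbounded capacity from $p_{\{x,y\}}$ to the vertex nodes $x$ and $y$;
\item an arc from each vertex node $v$ to a sink $\tau$ of capacity $s_v$.
\end{itemize}
An integral flow of value $k\binom n2$ saturates every source arc and, since $\sum_v s_v=k\binom n2$, every sink arc. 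Setting $\omega(x,y)$ equal to the flow from $p_{\{x,y\}}$ to $x$ then gives nonnegative integers with $\omega(x,y)+\omega(y,x)=k$. The outdegree of each $v$ is exactly $s_v$, so this yields a $k$-tournament with score sequence $s$. By the integral max-flow/min-cut theorem it therefore suffices to show that every $\sigma$–$\tau$ cut has capacity at least $k\binom n2$.

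\emph{The cut analysis (the main step).} Let the source side of a cut contain the vertex set $W$ and a set $P$ of pair nodes. If some $p_{\{x,y\}}\in P$ has an endpoint outside $W$, then an infinite arc crosses the cut, so we may assume every pair in $P$ lies inside $W$. Moving into $P$ all pairs contained in $W$ only lowers the capacity. The cut capacity is then
\[
k\Bigl(\tbinom n2-\tbinom{|W|}2\Bigr)+\sum_{v\in W}s_v,
\]
counting the source arcs of the pairs not inside $W$ plus the sink arcs of the vertices in $W$. This is at least $k\binom n2$ exactly when $\sum_{v\in W}s_v\ge k\binom{|W|}2$. Since $s$ is nondecreasing, the left side is minimized over $|W|=t$ by the $t$ smallest scores, which is precisely \eqref{eq:klandaucondition}. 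The delicate part is getting the cut bookkeeping right: reducing to cuts where $P$ is exactly the set of pairs inside $W$, and noting that the case $W=\emptyset$ gives the trivial cut of capacity $k\binom n2$.

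Alternatively, one could induct on $k$ by peeling off an ordinary tournament whose scores are below $s$ componentwise. The flow argument avoids that more delicate decomposition.
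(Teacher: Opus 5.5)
The paper gives no proof of this statement. It is quoted from Brualdi's book and used as a black box (for instance in the remark after Lemma~\ref{lem:2tourn} and in Lemma~\ref{lem:ktok-1}), so there is no internal argument to compare yours with, and your proof has to stand on its own. It does.

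The necessity count is the standard one. In the sufficiency part every step checks out:
\begin{itemize}
\item integrality of the maximum flow gives integral weights;
\item saturation of the source arcs gives $\omega(x,y)+\omega(y,x)=k$;
\item since $\sum_v s_v=k\binom n2$, every sink arc is saturated, so the outdegrees are exactly the $s_v$;
\item your cut reduction is right: a finite cut forces $P$ to consist of pairs inside $W$, and enlarging $P$ to all such pairs lowers the capacity by $k$ per pair. This turns the min-cut condition into $\sum_{v\in W}s_v\ge k\binom{|W|}2$ for all $W$, and the nondecreasing order reduces this to the $t$ smallest scores.
\end{itemize}

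Two small points should be made explicit. First, the case $t=1$ of the hypothesis gives $s_0\ge 0$, so all sink capacities are nonnegative and the network is well defined. Second, reading the sum as $\sum_{i=0}^{t-1}s_i$ is the right repair of the index slip in the statement; as printed, $t=n$ would involve a nonexistent $s_n$.

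Your route also yields a more general result. The value $k$ enters the cut analysis only as the capacity of each pair node. So the same network proves the following criterion. Let $P$ be symmetric, integral, with zero diagonal. A nonnegative integral matrix $A$ with $A+A'=P$ and row sums $s_v$ exists if and only if $\sum_{v\in W}s_v\ge\sum_{\{x,y\}\subseteq W}p_{xy}$ for every $W\subseteq Z_n$, with equality for $W=Z_n$.

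The constancy of $P$ off the diagonal is used only in your last step, to replace ``all subsets $W$'' by ``the $t$ smallest scores''. The induction-on-$k$ alternative you mention would need a separate argument that a suitable tournament score vector can always be subtracted while keeping the $(k-1)$-conditions. The flow proof avoids that step entirely.
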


Let $n$ be a positive integer and $T\in \mathcal{T}_n$.
We say $T$ is \defn{transitive} if for all $x,y,z\in Z_n$, if $x >_T y$ and $y>_T z$, then $x>_T z$ as well.
Equivalently, $T$ is transitive if and only if $S(T)=Z_n$, and $T$ is transitive if and only if there exists an ordering $s_0,\dots,s_{n-1}$ of the elements of $Z_n$ such that $s_i=s_T(i)$ for all $i\in Z_n$. 
We denote such a tournament as $T(s_0\cdots s_{n-1})$, and we denote the adjacency matrix of such a tournament as $A(s_0\cdots s_{n-1})$.
Equivalently, if we let $\phi$ represent the permutation $s_0\cdots s_{n-1}$, in single word notation, then we abbreviate these objects as $T(\phi)$ and $A(\phi)$, respectively.
For example, let $T$ be the tournament given in Figure \ref{fig:tourn}(a).
Then $s_T(0)=2$, $s_T(1)=0$, $s_T(2)=1$, and $s_T(3)=3$, so $S(T)=Z_4$.
Then $T=T(2013)$.
Observe that $A(2013)$ is also given in Figure \ref{fig:tourn}(a).

Let $k$ and $n$ be positive integers and $T\in\mathcal{T}_n^k$.
Suppose $T_1,\dots,T_k\in \mathcal{T}_n$ such that $A_{T} = A_{T_1}+\cdots+A_{T_k}$.
Then we say that $T$ has a \defn{tournament decomposition}. 
In this case, we will abuse notation slightly to write $T = T_1 + T_2 + \ldots + T_k$.
Every $k$-tournament has a tournament decomposition, some $k$-tournaments have unique decompositions, and others have many distinct decompositions.
For example, for all $T\in\mathcal{T}_n$, $kT = T + \cdots + T$ has a unique tournament decomposition.
However, for all tournaments $T\in\mathcal{T}_n$, the complete digraph $D_n$ on $n$ vertices may be decomposed as $T + \overline{T}$, where $\overline{T}$ is the complement of $T$ in $D_n$.

If $T\in\mathcal{T}^k_n$ and $T_1,T_2,\dots,T_k\in \mathcal{T}_n$ such that $T=T_1+\cdots+T_k$, and $T_i$ is transitive for each $i\in\{1,\dots,k\}$, we say $T$ has a \defn{transitive tournament decomposition}, abbreviated \defn{TTD}.
For example, let $T\in \mathcal{T}_4^2$ as given in Figure \ref{fig:ktourn}(a). Then $T = T(2130) + T(2301)$, and hence $T$ has a TTD.
The adjacency matrices for the transitive tournaments in this decomposition are given in Figures \ref{fig:ktourn}(b) and \ref{fig:ktourn}(c).
Also note that $S(T)$ is easily produced by entry-wise addition of the integers in the sequences $2130$ and $2301$, giving that $S(T) = \{4,4,3,1\}$.
Finally, observe there are tournaments in $\mathcal{T}_n^k$ with no TTD. 
For example, if $T\in \mathcal{T}_n$ is not transitive, then $kT \in \mathcal{T}_n^k$ and $kT$ has a unique tournament decomposition into copies of $T$, each of which is not transitive.

\subsection{Permutations}

Let $n$ be a positive integer and $\phi\in S_n$.
Denote the permutation matrix associated with $\phi$ as $P_\phi$.
The \defn{xray} of $\phi$, denoted as $x(\phi)$, is the sequence $(x_0,\dots,x_{2n-2})$ of length $2n-1$ such that for each $i\in Z_{2n-1}$, $x_i(\phi)$ is the sum of the terms in the $i$th antidiagonal of $P_\phi$; that is,
\begin{equation*}
x_i(\phi) = P_\phi(0,i) + P_\phi(1,i-1) + \cdots + P_\phi(i,0),
\end{equation*}
with the convention that $P_\phi(x,y)=0$ if $x\geq n$ or $y\geq n$.\footnote{In previous works, the indexing of the xray begins at 1, where we use 0 in this paper.
This adjustment for the indexing simplifies the statements and arguments later in the paper.}
If $x_i(\phi)\in\{0,1\}$ for each $i\in Z_{2n-1}$, we say $\phi$ has a \defn{binary xray}.
For example, the permutation $120\in S_3$ has the binary xray $x(120) = (0,1,1,1,0)$, and $021\in S_3$ has the xray $x(021)=(1,0,0,2,0)$, which is not binary.

Let $(i_1,\dots,i_t)$ be the increasing sequence of indices such that $x_{i_j}(\phi)$ is positive, for each $j\in\{1,\dots,t\}$.
Define the \defn{characteristic set} of $\phi$, denoted $C(\phi)$, as the (multi)set given by the union of $x_{i_j}$ copies of $i_j$, for each $j\in\{1,\dots,t\}$, or equivalently, $C(\phi)=\{i+\phi(i):i\in Z_n\}$.
That is, $C(\phi)$ is the (multi)set of indices for the antidiagonals for the 1s in $P_\phi$.
Let the \defn{characteristic sequence} of $\phi$, denoted $c(\phi)$, be the nondecreasing sequence of the terms in $c(\phi)$.
For example, $C(120)=\{1,3,2\}$, $c(120)=(1,2,3)$, $C(021)=\{0,3,3\}$ and $c(021)=(0,3,3)$.
Note that $x(\phi)$ is binary if and only if $C(\phi)$ is simple (not a multiset). 
For a given permutation $\phi$, we have that $x(\phi)$, $c(\phi)$ and $C(\phi)$ all contain structurally equivalent information about $\phi$.
Bebeacua et al.~use $x(\phi)$ for a majority of their arguments, while Brualdi and Fritscher~\cite{brualdi} use $c(\phi)$ for most of their work.
In our arguments, we primarily use $C(\phi)$ and $c(\phi)$, as they align better with score sequences of tournaments, in practice.

As established by previous works~\cite{bebeacua,brualdi}, the entries in a characteristic sequence satisfy a family of necessary conditions.

\begin{lemma}
\label{lem:xrayconditions}
Let $n$ be a positive integer and $\phi\in S_n$.
Let $c(\phi) = (c_0,\dots,c_{n-1})$.
If $1\leq t \leq n$,
\begin{equation}
\sum_{i=0}^{t-1} c_i \geq 2\binom t2,
\label{eq:xraycondition}
\end{equation}
with equality achieved when $t=n$.
\end{lemma}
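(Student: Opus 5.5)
The plan is to prove the bound by a direct counting argument on the multiset $C(\phi)=\{i+\phi(i): i\in Z_n\}$. Fix $t$ with $1\le t\le n$. The sum $\sum_{i=0}^{t-1} c_i$ of the $t$ smallest elements of $C(\phi)$ is at most the sum of \emph{any} $t$ elements of $C(\phi)$. The goal is the reverse direction: show that \emph{every} choice of $t$ elements of $C(\phi)$ has sum at least $2\binom t2 = t(t-1)$. Applying this to the $t$ smallest elements then gives the inequality.

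To do this, take any $t$-subset $I\subseteq Z_n$ of positions; every $t$ elements of the multiset arise this way. Then
\begin{equation*}
\sum_{i\in I} \bigl(i+\phi(i)\bigr) = \sum_{i\in I} i + \sum_{j\in\phi(I)} j .
\end{equation*}
Both $I$ and $\phi(I)$ are $t$-element subsets of $Z_n$, since $\phi$ is a bijection. The sum of $t$ distinct nonnegative integers is at least $0+1+\cdots+(t-1)=\binom t2$. Hence the total is at least $2\binom t2$.

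Applying this with $I$ chosen as the set of positions realizing the $t$ smallest entries $c_0,\dots,c_{t-1}$ (ties broken arbitrarily) gives \eqref{eq:xraycondition}. The only care needed is with multiplicities in the multiset: choosing positions rather than values handles this cleanly.

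For equality at $t=n$, take $I=Z_n$, so $\phi(I)=Z_n$ as well. Then $\sum_{i=0}^{n-1} c_i = 2\sum_{i=0}^{n-1} i = 2\binom n2$ exactly.

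There is no serious obstacle. The one point to state carefully is that the sequence's smallest $t$ terms correspond to some $t$ distinct indices $i$. This holds because $c(\phi)$ is just a sorted listing of the values $i+\phi(i)$, with one value per index.
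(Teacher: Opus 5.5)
Your proposal is correct and is essentially the paper's argument. The paper likewise writes each $c_i = x_i + y_i$ with $\phi(x_i) = y_i$, observes that the $x_i$ and the $y_i$ each run over distinct elements of $Z_n$, and bounds each of the two partial sums below by $\binom t2$, with equality when $t=n$.
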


\begin{proof}
Suppose $1\leq t \leq n$.
For each $i\in Z_n$, define $x_i$ and $y_i$ so that $x_i+y_i = c_i$ and $\phi(x_i)=y_i$.
Then $\{x_i:i\in Z_n\}=\{y_i:i\in Z_n\} = Z_n$.
So
\begin{align*}
\sum_{i=0}^{t-1} c_i 
&= \sum_{i=0}^{t-1} (x_i+y_i)
= \left(\sum_{i=0}^{t-1} x_i\right)+\left(\sum_{i=0}^{t-1} y_i\right)
\geq \binom{t}2+\binom t2
= 2\binom t2,
\end{align*}
and we have equality when $t=n$.
\end{proof}

For each positive integer $n$, define $\overline{\mathfrak{S}_n^2}$ as the subset of $\mathfrak{S}_n^2$ which contains increasing sequences.
Let $\alpha:\overline{\mathfrak{S}_n^2}\to \mathfrak{S}_n$ be defined such that $\alpha(s_0,\dots,s_{n-1}) = (s'_0,\dots,s'_{n-1})$, where $s'_i = s_i-i$ for each $i\in Z_n$.
Bebeacua et al.~showed that $\alpha$ is a bijection, and hence $|\overline{\mathfrak{S}_n^2}|=|\mathfrak{S}_n|$.
Observe that the inequalities given in \eqref{eq:xraycondition} are identical to those found in \eqref{eq:klandaucondition} presented in Theorem \ref{thm:klandau} with $k=2$.
So $\overline{\mathfrak{C}}_n \subseteq \overline{\mathfrak{S}_n^2}$, which proves Theorem \ref{thm:bebeacua} and led to Conjecture \ref{conj:bebeacua}.


Finally, we state without proof the following property of xrays when equality in~\eqref{eq:xraycondition} is met for some value $t < n$ by the characteristic sequence of a permutation.

\begin{lemma}\label{lem:PermReduc}
Let $n$ be a positive integer, $\phi\in S_n$, and $c(\phi)=(c_0,\dots,c_{n-1})$.
Suppose that, for an integer $t$ with $1\leq t \leq n-1$,
\begin{equation*}
\sum_{i=0}^{t-1} c_i = 2 \binom t2.
\end{equation*}
Then $\phi$ is a reducible permutation, and $\phi$ may be written as $\phi = \phi_1\phi_2$, where $\phi_1\in S_t$ and $\phi_2\in S_{n-t}$ acting on $Z_n\backslash Z_t$.
Equivalently, $P_\phi=P_{\phi_1}\oplus P_{\phi_2}$, which is the diagonal block matrix with $P_{\phi_1}$ and $P_{\phi_2}$ appearing on the main diagonal.
\end{lemma}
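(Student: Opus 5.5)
We argue by the block structure of $\phi$ that equality forces, comparing rows and columns.

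Write $c_i = x_i + y_i$ with $\phi(x_i)=y_i$, as in the proof of Lemma~\ref{lem:xrayconditions}. Set $X=\{x_0,\dots,x_{t-1}\}$ and $Y=\{y_0,\dots,y_{t-1}\}$, the rows and columns of the $1$s of $P_\phi$ on the $t$ smallest antidiagonals. Both are $t$-element subsets of $Z_n$: the $x_i$ are distinct because $\phi$ is a bijection, and so are the $y_i$.

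The key observation is that a $t$-element subset of $Z_n$ has sum at least $\binom t2$, with equality if and only if the subset is $Z_t$. The hypothesis gives $\sum_{i<t} x_i + \sum_{i<t} y_i = 2\binom t2$, and each of the two sums is at least $\binom t2$. Hence both sums equal $\binom t2$, and therefore $X = Y = Z_t$.

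Since $\phi(X)=Y$, we get $\phi(Z_t)=Z_t$. Because $\phi$ is a bijection, it then also maps $Z_n\setminus Z_t$ onto itself. Let $\phi_1$ be the restriction of $\phi$ to $Z_t$, which lies in $S_t$, and let $\phi_2$ be the restriction to $Z_n\setminus Z_t$. Then $\phi$ is their concatenation in one-line notation, and $P_\phi = P_{\phi_1}\oplus P_{\phi_2}$.

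One subtlety needs care. When $C(\phi)$ is a multiset, the choice of the first $t$ pairs $(x_i,y_i)$ may not be unique if $c_{t-1}=c_t$. This does not affect the argument: for any valid choice of indexing, the equality argument forces $X=Y=Z_t$, so the conclusion holds.

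The main (mild) obstacle is the equality case of the minimal subset sum, which I would justify as follows. Sort the subset increasingly as $a_0<\dots<a_{t-1}$. Then $a_j\ge j$ for each $j$, so the sum is at least $\binom t2$, and equality forces $a_j=j$ for every $j$.
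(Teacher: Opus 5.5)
Your proof is correct. The paper states this lemma without proof, and your argument is exactly the equality case of the paper's proof of Lemma~\ref{lem:xrayconditions}: equality in $\sum_{i<t}x_i+\sum_{i<t}y_i\ge 2\binom t2$ forces both sums to equal $\binom t2$, so $X=Y=Z_t$ and $\phi(Z_t)=Z_t$. Your concern about ties $c_{t-1}=c_t$ is harmless, and in fact such a tie cannot occur, since $c_{t-1}\le 2t-2$ while every remaining antidiagonal index is at least $2t$.
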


\section{Permutations and transitive tournaments}
\label{sec:tournaments}

In this section, we explore a more structural relationship between permutations and tournaments.
We first highlight the correspondence between $S_n$ and $\mathcal{T}_n^2$
by giving a method that, for a permutation $\phi\in S_n$, produces a $2$-tournament $T\in \mathcal{T}_n^2$ such that $c(\phi) = s(T)$.
We show that TTDs of $2$-tournaments are necessary for a score sequence to give rise to an xray of a permutation.
We conclude this section with an exposition on a generalization of binary xrays.
Note that, in what follows, we use parenthetical notation to abbreviate sequences.
For example, $(5:1)(3:2)(5:2)(4:3)$ represents the sequence $5,3,3,5,5,4,4,4$.

\subsection[Permutations and 2-tournaments]{Permutations and $2$-tournaments}

Let $n$ be a positive integer 
and $\phi\in S_n$.
Recall that $T(\phi)\in \mathcal{T}_n$ is the tournament for which $i$ has $\phi(i)$ wins for each $i\in Z_n$, or equivalently the team with $i$ wins is $\phi^{-1}(i)$, for each $i\in Z_n$.
The map $\phi \leftrightarrow T(\phi)$ is a natural bijection between permutations and transitive tournaments, and we denote $\phi_T$ as the permutation corresponding to $T$ in this way.
Note that $\phi_T(i)=s_T(i)$ for each $i\in Z_n$.
With this in mind, we can also find a $2$-tournament that naturally corresponds to a permutation with respect to its xray.

\begin{lemma}
\label{lem:2tourn}
Let $n$ be a positive integer and $\phi\in S_n$.
Then there exists a $2$-tournament $T\in\mathcal{T}_n^2$ with a TTD such that $s(T)=c(\phi)$.
\end{lemma}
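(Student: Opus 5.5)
The construction is direct: take $T = T(\iota) + T(\phi)$, where $\iota$ is the identity permutation on $Z_n$ and $T(\iota)$, $T(\phi)$ are the transitive tournaments attached to these permutations as in Section~\ref{sec:prelim}. In $T(\iota)$ vertex $i$ has exactly $i$ wins, and in $T(\phi)$ vertex $i$ has exactly $\phi(i)$ wins. We set $A_T = A(\iota) + A(\phi)$.

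The first step is to check that $T$ is a $2$-tournament. Since $A(\iota)$ and $A(\phi)$ are adjacency matrices of $1$-tournaments, each satisfies $A + A' = J_n - I_n$. Summing gives $A_T + A_T' = 2(J_n - I_n)$, and every entry of $A_T$ is a nonnegative integer. So $T \in \mathcal{T}_n^2$, and by construction $T = T(\iota) + T(\phi)$ is a TTD, since both summands are transitive.

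The second step is to compute the scores. Row sums add under matrix addition, so the outdegree of vertex $i$ in $T$ is
\[
s_T(i) = s_{T(\iota)}(i) + s_{T(\phi)}(i) = i + \phi(i).
\]
Hence the multiset $S(T) = \{i + \phi(i) : i \in Z_n\}$ is exactly $C(\phi)$, and sorting both into nondecreasing order gives $s(T) = c(\phi)$. This is the same entrywise-addition observation the paper makes for the example $T(2130) + T(2301)$.

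I see no real obstacle here. The only point needing care is the convention that $T(\phi)$ is the transitive tournament in which vertex $i$ scores $\phi(i)$, which requires every permutation to arise as a score assignment of some transitive tournament. That holds: let $x$ beat $y$ exactly when $\phi(x) > \phi(y)$. This tournament is transitive, and vertex $x$ beats precisely the $\phi(x)$ vertices with smaller $\phi$-value.
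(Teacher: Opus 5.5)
Your proof is correct and is essentially the paper's argument: the paper also sets $T = T(\epsilon) + T(\phi)$ with $\epsilon$ the identity, reads off $s_T(i) = i + \phi(i)$, and concludes $S(T) = C(\phi)$. Your extra checks, that $A_T + A_T' = 2(J_n - I_n)$ and that the transitive tournament $T(\phi)$ exists, are correct details the paper leaves implicit.
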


It should be noted that this lemma follows directly from Theorem \ref{thm:klandau}.
However, the following proof gives a more structural relationship between elements of $\mathcal{T}_n^2$ and $S_n$.
Furthermore, this lemma directly implies Theorem \ref{thm:new_thm}.

\begin{proof}
Let $\epsilon\in S_n$ be the identity permutation, and define $T=T(\epsilon)+T(\phi)$.
Then $s_{T(\epsilon)}(i) = \epsilon(i) = i$ and $s_{T(\phi)}(i) = \phi(i)$ for each $i\in Z_n$, so $s_T(i) = i+\phi(i)$.
Hence, as multisets,
$S(T) = \{ i+\phi(i): i\in Z_n\} = C(\phi)$, and so $s(T)=c(\phi)$.
\end{proof}

For $\phi\in S_n$, we denote  $T(\epsilon)+T(\phi)$ as $T^2(\phi)$.
So $c(\phi) = s(T^2(\phi))$.
For example, let $\phi = 2310$.
Then $C(\phi)=\{2,4,3,3\}$.
So, as outlined in Lemma \ref{lem:2tourn}, $T^2(\phi)=T(0123)+T(2310)$, and $S(T^2(\phi)) = \{2,4,3,3\}$ as well.
The tournament $T^2(\phi)$, its decomposition, and associated adjacency matrices are given in Figure \ref{fig:phi2tourn}.
Note that, in general, any $2$ in $A_{T^2(\phi)}$ must appear below the main diagonal.
\begin{figure}
\centering
\begin{tabular}{@{}ccc@{}}
\begin{tikzpicture}[line width=1pt,scale=1.2]
\coordinate (0) at (0,0);
\coordinate (3) at (0,1.3);
\coordinate (1) at (1,0);
\coordinate (2) at (1,1.3);
\node[circle,draw,minimum width=13pt] (a0) at (0) {};
\node[circle,draw,minimum width=13pt] (a1) at (1) {};
\node[circle,draw,minimum width=13pt] (a2) at (2) {};
\node[circle,draw,minimum width=13pt] (a3) at (3) {};
\node at (0) {0};
\node at (1) {1};
\node at (2) {2};
\node at (3) {3};
\draw[->,lightgray] (a1) to[bend right=10] (a0);
\draw[->,lightgray] (a1) to[bend left=10] (a2);
\draw[->,lightgray] (a1) to[bend right=10] (a3);
\draw[->,lightgray] (a0) to[bend left=10] (a2);
\draw[->,lightgray] (a0) to[bend right=10] (a3);
\draw[->,lightgray] (a2) to[bend left=10] (a3);
\end{tikzpicture}
\begin{tikzpicture}
\node at (0,0) {\begin{tabular}{|@{\ }c@{\ }|@{\ }c@{\ }|@{\ }c@{\ }|@{\ }c@{\ }|}
\hline
0 & 0 & 1 & 1\\\hline
1 & 0 & 1 & 1\\\hline
0 & 0 & 0 & 1\\\hline
0 & 0 & 0 & 0\\\hline
\end{tabular}};
\end{tikzpicture}
&
\begin{tikzpicture}[line width=1pt,scale=1.2]
\coordinate (0) at (0,0);
\coordinate (3) at (0,1.3);
\coordinate (1) at (1,0);
\coordinate (2) at (1,1.3);
\node[circle,draw,minimum width=13pt] (a0) at (0) {};
\node[circle,draw,minimum width=13pt] (a1) at (1) {};
\node[circle,draw,minimum width=13pt] (a2) at (2) {};
\node[circle,draw,minimum width=13pt] (a3) at (3) {};
\node at (0) {0};
\node at (1) {1};
\node at (2) {2};
\node at (3) {3};
\draw[->] (a3) to[bend left=10] (a2);
\draw[->] (a3) to[bend right=10] (a1);
\draw[->] (a3) to[bend right=10] (a0);
\draw[->] (a2) to[bend left=10] (a1);
\draw[->] (a2) to[bend left=10] (a0);
\draw[->] (a1) to[bend left=10] (a0);
\end{tikzpicture}
\begin{tikzpicture}
\node at (0,0) {\begin{tabular}{|@{\ }c@{\ }|@{\ }c@{\ }|@{\ }c@{\ }|@{\ }c@{\ }|}
\hline
0 & 0 & 0 & 0\\\hline
1 & 0 & 0 & 0\\\hline
1 & 1 & 0 & 0\\\hline
1 & 1 & 1 & 0\\\hline
\end{tabular}};
\end{tikzpicture}
&
\begin{tikzpicture}[line width=1pt,scale=1.2]
\coordinate (0) at (0,0);
\coordinate (3) at (0,1.3);
\coordinate (1) at (1,0);
\coordinate (2) at (1,1.3);
\node[circle,draw,minimum width=13pt] (a0) at (0) {};
\node[circle,draw,minimum width=13pt] (a1) at (1) {};
\node[circle,draw,minimum width=13pt] (a2) at (2) {};
\node[circle,draw,minimum width=13pt] (a3) at (3) {};
\node at (0) {0};
\node at (1) {1};
\node at (2) {2};
\node at (3) {3};
\draw[->,lightgray] (a1) to[bend right=10] (a0);
\draw[->,lightgray] (a1) to[bend left=10] (a2);
\draw[->,lightgray] (a1) to[bend right=10] (a3);
\draw[->,lightgray] (a0) to[bend left=10] (a2);
\draw[->,lightgray] (a0) to[bend right=10] (a3);
\draw[->,lightgray] (a2) to[bend left=10] (a3);
\draw[->] (a3) to[bend left=10] (a2);
\draw[->] (a3) to[bend right=10] (a1);
\draw[->] (a3) to[bend right=10] (a0);
\draw[->] (a2) to[bend left=10] (a1);
\draw[->] (a2) to[bend left=10] (a0);
\draw[->] (a1) to[bend left=10] (a0);
\end{tikzpicture}
\begin{tikzpicture}
\node at (0,0) {\begin{tabular}{|@{\ }c@{\ }|@{\ }c@{\ }|@{\ }c@{\ }|@{\ }c@{\ }|}
\hline
0 & 0 & 1 & 1\\\hline
2 & 0 & 1 & 1\\\hline
1 & 1 & 0 & 1\\\hline
1 & 1 & 1 & 0\\\hline
\end{tabular}};
\end{tikzpicture}
\\
(a) & (b) & (c)\end{tabular}
\caption{The transitive tournaments $T(2310)$ (a) and $T(0123)$ (b) with their adjacency matrices, and their sum $T^2(2310)$ (c), which is $T(2310)+T(0123)$.}
\label{fig:phi2tourn}
\end{figure}

The following is another way to visualize the corresponding $2$-tournament from a permutation arising in Lemma \ref{lem:2tourn}.
Let $\phi\in S_n$.
Define $T$ as the $2$-tournament such that for each $x,y\in Z_n$, 
\begin{equation}
\omega_T(x,y)=\left\{\begin{array}{rl}
0 &\text{if $x < y$ and $\phi(x) < \phi(y)$}, \\
2 &\text{if $x > y$ and $\phi(x) > \phi(y)$, and}\\
1 &\text{otherwise.}\\
\end{array}\right.
\label{eq:otherway}
\end{equation}
That is, two teams $x$ and $y$ split their games if and only if $\phi$ has an inversion between $x$ and $y$; otherwise the team with the greater index wins both games in $T$. 
In the previous example, $2310$ has five inversions; the only non-inversion occurs at indices $0$ and $1$, so $\omega_T(1,0)=2$ and $\omega_T(0,1)=0$.

The methods used in the proof of Lemma \ref{lem:2tourn} are also applicable to Skolem sequences.
Nordh \cite{nordh1} showed a relationship between Skolem sequences and binary xrays, which in our context, is equivalent to the following.
Let $\phi\in S_n$ and suppose $c(\phi)\in \overline{\mathfrak{C}}_n$.
Construct a matrix by replacing each $1$ in $P_\phi$ with the xray index of the $1$.
Project the nonzero entries of this matrix up and to the left. Concatenating these projections, then adding 1 to each term produces a Skolem sequence.
For example, let $\phi=0312$.
Since $C(\phi)=\{0,3,4,5\}$, we appropriately replace the $1$s in $P_\phi$ with the values in $C(\phi)$, then project and add to get the Skolem sequence $64511465$.
See Figure \ref{fig:skolem}.
\begin{figure}
\centering
$\begin{array}[b]{|c|c|c|c|}
\hline
1 & 0 & 0 & 0\\\hline
0 & 0 & 0 & 1\\\hline
0 & 1 & 0 & 0\\\hline
0 & 0 & 1 & 0\\\hline
\end{array}
\qquad
\begin{array}[b]{|c|c|c|c|}
\hline
\mathbf{0} & 0 & 0 & 0\\\hline
0 & 0 & 0 & \mathbf{4}\\\hline
0 & \mathbf{3} & 0 & 0\\\hline
0 & 0 & \mathbf{5} & 0\\\hline
\end{array}
\qquad\begin{array}[b]{|c||c|c|c|c|}
\cline{2-5}
\multicolumn{1}{c|}{} &
0 & 3 & 5 & 4 \\\cline{2-5}
\multicolumn{1}{c}{}\\[-2.2ex]\hline
0 & \mathbf{0} & 0 & 0 & 0\\\hline
4 & 0 & 0 & 0 & \mathbf{4}\\\hline
3 & 0 & \mathbf{3} & 0 & 0\\\hline
5 & 0 & 0 & \mathbf{5} & 0\\\hline
\end{array}
\qquad
\begin{array}[b]{@{}c@{}}
53400354\\
\downarrow \\
64511465\\\\
\end{array}$
\caption{Producing a Skolem sequence from a permutation with a binary xray.}
\label{fig:skolem}
\end{figure}

Skolem sequences can similarly be produced from a $2$-tournament with a TTD.
Let $T\in \mathcal{T}^2_n$ have TTD $T_1+T_2$ for some transitive $T_1,T_2\in \mathcal{T}_n$. 
Let $v_0\cdots v_{n-1}$ and $w_0\cdots w_{n-1}$ be the teams in $Z_n$, ranked from lowest to highest score in $T_1$ and $T_2$, respectively.
Then adding one to each term of the sequence $(s_T(v_{n-1}),\dots,s_T(v_0),s_T(w_0),\dots,s_T(w_{n-1}))$ produces a Skolem sequence.
For example, Let $T\in \mathcal{T}^2_n$ as given in Figure \ref{fig:ktourn}(a), and $T_1$ and $T_2$ be the transitive tournaments
whose adjacency matrices are given in Figures \ref{fig:ktourn}(b) and~\ref{fig:ktourn}(c).
Then $s_T(0)=4$, $s_T(1)=4$, $s_T(2)=3$, $s_T(3)=1$, $v_0v_1v_2v_3=3102$ and $w_0w_1w_2w_3=2301$,
so the resulting Skolem sequence is 
\begin{align*}
s_T(2)s_T(0)s_T(1)s_T(3)s_T(2)s_T(3)s_T(0)s_T(1)\quad+\quad1111111\quad=\quad45524255.
\end{align*}

The previous lemma gives a construction of a $2$-tournament from a permutation, while the next lemma seeks to build a permutation from a $2$-tournament.

\begin{lemma}
\label{lem:ttop}
Let $n$ be a positive integer and $T\in \mathcal{T}_n^2$.
If $T$ has a TTD, then there exists a permutation $\phi\in S_n$ such that $c(\phi)=s(T)$.
\end{lemma}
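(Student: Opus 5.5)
The plan is to reverse the construction in the proof of Lemma \ref{lem:2tourn}. Suppose $T=T_1+T_2$ is a TTD, with $T_1,T_2\in\mathcal{T}_n$ transitive. Let $\sigma=\phi_{T_1}$ and $\tau=\phi_{T_2}$ be the permutations corresponding to these transitive tournaments, so $s_{T_1}(v)=\sigma(v)$ and $s_{T_2}(v)=\tau(v)$ for every $v\in Z_n$. Since $A_T=A_{T_1}+A_{T_2}$, row sums add, and hence $s_T(v)=\sigma(v)+\tau(v)$ for every $v$. This gives the multiset identity
\[
S(T)=\{\sigma(v)+\tau(v): v\in Z_n\}.
\]

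We then reindex by the scores in $T_1$. Set $\phi=\tau\circ\sigma^{-1}\in S_n$ and substitute $i=\sigma(v)$. As $v$ runs over $Z_n$, so does $i$, because $\sigma$ is a bijection. Therefore
\[
S(T)=\{i+\tau(\sigma^{-1}(i)) : i\in Z_n\}=\{i+\phi(i): i\in Z_n\}=C(\phi).
\]
Sorting both multisets gives $s(T)=c(\phi)$.

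Put differently, the only structural point is that relabelling vertices does not change the score multiset. Relabel the vertices via $\sigma$ so that $T_1$ becomes $T(\epsilon)$; then $T_2$ becomes $T(\phi)$, and the relabelled $T$ is exactly $T^2(\phi)$ from Lemma \ref{lem:2tourn}. There is no real obstacle. The only care needed is to check the direction of the composition ($\tau\sigma^{-1}$ rather than $\sigma^{-1}\tau$), and to note that multiplicities are preserved because the reindexing is a bijection on $Z_n$.
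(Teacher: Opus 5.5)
Your proposal is correct and follows the paper's proof: you take $\phi=\phi_{T_2}\phi_{T_1}^{-1}$ and reindex by $i=\phi_{T_1}(j)$ to get $C(\phi)=\{s_{T_1}(j)+s_{T_2}(j): j\in Z_n\}=S(T)$. Your added remark that relabelling by $\phi_{T_1}$ turns $T$ into $T^2(\phi)$ is also correct.
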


\begin{proof}
Let $T_1,T_2\in\mathcal{T}_n$ be transitive tournaments such that $T=T_1+T_2$.
Define $\phi = \phi_{T_2}\phi_{T_1}^{-1}$.
So, as multisets,
\begin{align*}
C(\phi) 
&= \{ i + \phi(i): i\in Z_n\} = \{ i + \phi_{T_2}\phi_{T_1}^{-1}(i): i\in Z_n\} \\
&= \{ \phi_{T_1}(j) + \phi_{T_2}\phi_{T_1}^{-1}\phi_{T_1}(j): j\in Z_n\} = \{ \phi_{T_1}(j) + \phi_{T_2}(j): j\in Z_n\}\\
&= \{ s_{T_1}(j) + s_{T_2}(j): j\in Z_n\}= \{ s_{T}(j): j\in Z_n\}
= S(T).\qedhere
\end{align*}
\end{proof}

Note that the converse of Lemma \ref{lem:ttop} is not true.
For example, let $n$ be a positive, odd integer, $n\geq 3$, and $T\in\mathcal{T}_n$ be a regular tournament.
Then $2T\in \mathcal{T}_n^2$ and $s(2T)=(n-1:n)$.
Let $\phi\in S_n$ such that $\phi(i)=n-1-i$ for each $i\in Z_n$.
Then $c(\phi) = (n-1:n)=s(2T)$, but as discussed earlier, since $T$ is not transitive, $2T$ does not have a TTD.

We have thus established that, given a score sequence $s\in\mathfrak{S}_n^2$, there may be tournaments $T\in\mathcal{T}_n^2$ for which $s(T)=s$ and $T$ does not have a TTD.
This leads to the following question:

\begin{question}
Let $n$ be a positive integer and $s\in\mathfrak{S}_n^2$. 
Does there exist $T\in\mathcal{T}_n^2$ such that $T$ has a TTD and $s(T) =s$?
\end{question}

In general, the answer to this question is no.

\begin{example}\label{ex:333777ex}
Let $n=2k$ for an integer $k\geq 3$.
Consider the sequence $s=(k:k)(3k-2:k)$.
\begin{figure}
\centering
$\begin{array}{|c|c|c|c|c|c|}
\hline
0 & 1 & 1 & 1 & 0 & 0\\\hline
1 & 0 & 1 & 0 & 1 & 0\\\hline
1 & 1 & 0 & 0 & 0 & 1\\\hline
1 & 2 & 2 & 0 & 1 & 1\\\hline
2 & 1 & 2 & 1 & 0 & 1\\\hline
2 & 2 & 1 & 1 & 1 & 0\\\hline
\end{array}$
\caption{A $2$-tournament with score sequence $(3,3,3,7,7,7)$.}
\label{fig:333777}
\end{figure}
See Figure \ref{fig:333777} for the adjacency matrix of a $2$-tournament $T_0\in\mathcal{T}_6^2$ such that $s(T_0)=s$ with $k=3$.
It is a straightforward computation to see that, for arbitrary $k\geq 3$, $s$ meets the conditions outlined in \eqref{eq:xraycondition}, and hence $s\in \mathfrak{S}_n^2$.
By way of contradiction, suppose there exists $T\in\mathcal{T}_{n}^2$ with a TTD $T_1+T_2$ for which $s(T)=s$.
We may assume that $T_1=T(\epsilon)$, after an appropriate simultaneous permutation of rows and columns.
So 
\begin{equation*}
S(T_2) = \{s_{T_2}(i):i\in Z_n\} = \{s_T(i)-s_{T(\epsilon)}(i): i\in Z_n\} = \{s_T(i)-i:i\in Z_n\}.
\end{equation*}
Since $T_2$ is transitive, $S(T_2)=Z_n$, so in particular, there are distinct $i,j\in Z_n$ such that $s_T(i)-i = 1$ and $s_T(j)-j = 2k-1$.
Since $s_T(i),s_T(j)\in\{k,3k-2\}$, we have that $i\in\{k-1,3k-3\}$ and $j\in \{-(k-1),k-1\}$.
Since $3k-3,-(k-1)\notin Z_n$ when $k\geq 3$, it follows that $i=j=k-1$, which is a contradiction.
Hence no tournament $T\in \mathcal{T}_n^2$ for which $s(T)=s$ has a TTD.

\end{example}

\subsection{Permutations and 1-tournaments}

We now relate this line of questioning to binary xrays.
Recall that $\alpha:\overline{\mathfrak{C}}_n\to\mathfrak{S}_n$ is an injection, and observe that $c^{-1}(\overline{\mathfrak{C}}_n)$ is the set of all permutations in $S_n$ whose xray is binary.

One way to prove Conjecture \ref{conj:bebeacua} would be to find a subset $\mathcal{T} \subseteq \mathcal{T}_n$ and a function $\delta:\mathcal{T} \to c^{-1}(\overline{\mathfrak{C}}_n)$ such that both $S(\mathcal{T})=\mathfrak{S}_n$ and $s = c\circ \delta$.
While we were not successful in this, we now present a  function $\gamma:c^{-1}(\overline{\mathfrak{C}}_n)\to\mathcal{T}_n$ so that $s\circ \gamma = \alpha \circ c$; that is we factor $c$ through $\mathcal{T}_n$.
The existence of such a function is an immediate corollary of Theorem \ref{thm:landau}; given a permutation $\phi\in c^{-1}(\overline{\mathfrak{C}}_n)$, simply construct a tournament $T$ for which $s(T) = (\alpha\circ c)(\phi)$ using Theorem \ref{thm:landau}.
However, what we present is a function which provides a direct construction of a tournament from a permutation that does not require the use of Theorem \ref{thm:landau}, and imbues some structural properties of $\phi$ in the resulting tournament. 

\begin{theorem}
\label{thm:factoralpha}
Let $n$ be a positive integer.
There exists $\gamma:c^{-1}(\overline{\mathfrak{C}}_n)\to \mathcal{T}_n$ such that for all $\phi\in c^{-1}(\overline{\mathfrak{C}}_n)$, $(s\circ\gamma)(\phi) = (\alpha\circ c)(\phi)$.
\end{theorem}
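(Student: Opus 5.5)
The plan is to build $\gamma(\phi)$ by subtracting a suitable transitive tournament from the $2$-tournament $T^2(\phi)=T(\epsilon)+T(\phi)$ of Lemma \ref{lem:2tourn}. Let $\phi\in c^{-1}(\overline{\mathfrak{C}}_n)$. Because the xray is binary, the values $i+\phi(i)$ for $i\in Z_n$ are pairwise distinct. So there is a unique permutation $\rho\in S_n$ with the property that $\rho(i)$ is the rank of $i+\phi(i)$ in $C(\phi)$. Equivalently, if $c(\phi)=(c_0,\dots,c_{n-1})$, then $i+\phi(i)=c_{\rho(i)}$. I then define $\gamma(\phi)$ as the digraph with weights $\omega_{T^2(\phi)}-\omega_{T(\rho)}$. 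The proof consists of showing that this is a genuine $1$-tournament and then computing its scores.

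\emph{Step 1: the weights lie in $\{0,1\}$ and sum to $1$ on each pair.} I use the description \eqref{eq:otherway} of $T^2(\phi)$ and check the three cases.
\begin{itemize}
\item If $x>y$ and $\phi(x)>\phi(y)$, then $\omega_{T^2(\phi)}(x,y)=2$. Moreover $x+\phi(x)>y+\phi(y)$, so $\rho(x)>\rho(y)$. In the transitive tournament $T(\rho)$, vertex $x$ has more wins, so $x$ beats $y$ and $\omega_{T(\rho)}(x,y)=1$. The difference is $1$.
\item Symmetrically, if $x<y$ and $\phi(x)<\phi(y)$, then $\omega_{T^2(\phi)}(x,y)=0$ and $x$ loses to $y$ in $T(\rho)$. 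The difference is $0$.
\item For an inversion pair, $\omega_{T^2(\phi)}(x,y)=1$ and $\omega_{T(\rho)}(x,y)\in\{0,1\}$, so the difference is again in $\{0,1\}$.
\end{itemize}
In every case the weights on the pair $\{x,y\}$ sum to $2-1=1$, since $T^2(\phi)$ is a $2$-tournament and $T(\rho)$ is a tournament. Hence $\gamma(\phi)\in\mathcal{T}_n$. Concretely, two teams forming a non-inversion of $\phi$ are won by the larger index, and an inversion pair is won by the team with the smaller antidiagonal index.

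\emph{Step 2: the score sequence.} By Lemma \ref{lem:2tourn}, $s_{T^2(\phi)}(i)=i+\phi(i)=c_{\rho(i)}$, and by definition $s_{T(\rho)}(i)=\rho(i)$. Scores are additive over the decomposition, so
\[
s_{\gamma(\phi)}(i)=c_{\rho(i)}-\rho(i).
\]
As $i$ ranges over $Z_n$, $\rho(i)$ ranges over $Z_n$, so the multiset of scores is $\{c_j-j : j\in Z_n\}$. Since $c$ is strictly increasing, $c_{j+1}-(j+1)\geq c_j-j$, so this listing is already nondecreasing. Therefore $s(\gamma(\phi))=(c_0-0,\dots,c_{n-1}-(n-1))=\alpha(c(\phi))$.

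The only delicate point is Step 1, the verification that subtracting $T(\rho)$ never produces a weight of $-1$ or $2$. This is exactly where the binary hypothesis enters: it makes $\rho$ well defined, and it is used together with the observation that entries of weight $2$ (respectively $0$) in $T^2(\phi)$ occur only between pairs whose antidiagonal indices are ordered compatibly with the weight. Once that is checked, the score computation is routine. The construction also visibly carries the inversion structure of $\phi$ into the resulting tournament.
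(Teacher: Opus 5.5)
Your proposal is correct and follows the paper's proof: both define $\rho$ by ranking the distinct antidiagonal indices $i+\phi(i)$, subtract the transitive tournament $T(\rho)$ from $T^2(\phi)$, and read off the scores $c_{\rho(i)}-\rho(i)$. Your case check via \eqref{eq:otherway} spells out the paper's brief claim that $T(\rho)$ is a subtournament of $T^2(\phi)$. One small refinement to your closing remark: the strict inequalities in the weight-$0$ and weight-$2$ cases hold for any $\phi$, so binarity is actually used only to make $\rho$ canonical and to make $c(\phi)$ strictly increasing, so that $\alpha$ applies.
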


\begin{proof}
Let $\phi\in c^{-1}(\overline{\mathfrak{C}}_n)$.
Let $T = T^2(\phi)$, and let $s(T) = (s_0,\dots,s_{n-1})$.
Since $s(T)= c(\phi)\in \overline{\mathfrak{C}}_n$, $s_0 <\cdots < s_{n-1}$.
Note that $s(T)$ is presented in increasing order, and does not necessarily reflect the scores in numerical order of teams; let $\rho\in S_n$ be defined so that $s_T(i) = s_{\rho(i)}$, that is, the $i$th team scored the $\rho(i)$th lowest score in $T$.
Recall that, as outlined in \eqref{eq:otherway}, 
if $\rho(i) < \rho(j)$, then $s_{\rho(i)} < s_{\rho(j)}$, and hence $\omega_T(\rho(j),\rho(i)) \geq 1$.
So $T(\rho)$ is a subtournament of $T$.
Define $\gamma(\phi)\in \mathcal{T}_n$ such that $T$ decomposes as $T(\rho)+\gamma(\phi)$.
Therefore
\begin{align*}
S(\gamma(T)) 
&= \{ s_{\gamma(\phi)}(i): i\in Z_n\}
\\&= \{ s_T(i)-s_{T(\rho)}(i): i\in Z_n\}
\\&= \{ s_T(i)-\rho(i): i\in Z_n\}
\\&= \{ s_{\rho(i)}-\rho(i): i\in Z_n\}
\\&= \{ s_{i}-i: i\in Z_n\}.
\end{align*}
So $s(\gamma(\phi)) = \alpha( s(T) ) = \alpha(c(\phi))$.
\end{proof}

An analogous question in the other direction is the following.

\begin{question}
\label{qu:bebeacua}
Let $n$ be a positive integer.
Does there exist a subset $\mathcal{T}\subseteq \mathcal{T}_n$ and a function $\delta:\mathcal{T}\to S_n$ such that 
$s(\mathcal{T})=\mathfrak{S}_n$ and for all $T\in\mathcal{T}$, $(\alpha\circ c\circ\delta)(T) = s(T)$?
\end{question}

If the answer to Question \ref{qu:bebeacua} is yes, then Conjecture \ref{conj:bebeacua} follows.
Indeed, suppose that for every $s\in \mathfrak{S}_n$, there exists $T \in \mathcal{T}_n$ such that $s(T)=s$
and $\alpha^{-1}(s) = (c\circ\delta)(T) \in \overline{\mathfrak{C}}_n$.
Since $\alpha$ is a bijection between $\mathfrak{S}_n$ and $\overline{\mathfrak{S}_n^2}$, it follows that $\overline{\mathfrak{S}_n^2} \subseteq \overline{\mathfrak{C}}_n$.
However $\overline{\mathfrak{C}}_n \subseteq \overline{\mathfrak{S}_n^2}$, so $\overline{\mathfrak{C}}_n = \overline{\mathfrak{S}_n^2}$, and hence $\overline{\mathfrak{C}}_n = \overline{\mathfrak{S}_n^2}$, which proves the conjecture.

In an effort to find $\delta$, the next lemma gives some sufficient conditions for the existence of such a function which aligns with the earlier constructions.

\begin{lemma}
\label{lem:otherway}
Let $n$ be a positive integer.
Suppose there exists $\mathcal{T}\subseteq \mathcal{T}_n$ such that $s(\mathcal{T}) = \mathfrak{S}_n$, and for each $T\in \mathcal{T}$, there exists a transitive $T'\in \mathcal{T}_n$ such that 
$(\alpha\circ s)(T+T') = s(T)$,
$T+T'$ has a TTD, and 
if $x <_{T'} y$ for some $x,y\in Z_n$, then $s_T(x) \leq s_T(y)$.
Then there exists $\delta:\mathcal{T}_n\to S_n$ such that $(\alpha\circ c \circ \delta)(T)=s(T)$.
\end{lemma}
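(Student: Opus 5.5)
The plan is to define $\delta$ on $\mathcal{T}$ directly from the transitive decomposition of $T+T'$ and then transport scores through Lemma \ref{lem:ttop}. (The statement writes the domain of $\delta$ as $\mathcal{T}_n$; the content is on $\mathcal{T}$, and off $\mathcal{T}$ we may set $\delta$ to any fixed permutation.) Fix $T\in\mathcal{T}$ and the promised transitive $T'$. By hypothesis $T+T'\in\mathcal{T}_n^2$ has a TTD, say $T+T'=T_1+T_2$ with $T_1,T_2$ transitive. Set
\[
\delta(T)=\phi_{T_2}\phi_{T_1}^{-1},
\]
exactly as in the proof of Lemma \ref{lem:ttop}. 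That lemma gives $C(\delta(T))=S(T+T')$ as multisets, and hence $c(\delta(T))=s(T+T')$.

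It remains to check that $c(\delta(T))$ lies in the domain of $\alpha$, that is, that it is increasing. Then $(\alpha\circ c\circ\delta)(T)=(\alpha\circ s)(T+T')=s(T)$ by the first hypothesis. The point is that $\alpha$ was only defined on increasing sequences, so the hypothesis $(\alpha\circ s)(T+T')=s(T)$ already implicitly asserts that $s(T+T')$ is increasing. The third hypothesis is what justifies this, and I would verify it explicitly as follows.

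Since $T'$ is transitive, we have $S(T')=Z_n$, and $s_{T+T'}(x)=s_T(x)+s_{T'}(x)$. Order the vertices so that $s_{T'}(x_0)<\cdots<s_{T'}(x_{n-1})$, i.e.\ $s_{T'}(x_i)=i$. In a transitive tournament, $x_i<_{T'}x_j$ exactly when $i<j$. The compatibility hypothesis then gives $s_T(x_0)\le\cdots\le s_T(x_{n-1})$. Hence
\[
s_{T+T'}(x_i)=s_T(x_i)+i,
\]
which is strictly increasing in $i$. So $S(T+T')$ is simple, and $s(T+T')$ is the increasing sequence $(s_T(x_i)+i)_i$. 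Moreover $(s_T(x_i))_i$ is the nondecreasing rearrangement of $S(T)$, so $\alpha(s(T+T'))=s(T)$ directly; this confirms the first hypothesis is consistent and could even be derived. Consequently $c(\delta(T))$ is increasing, so $\delta(T)$ has a binary xray, $c(\delta(T))\in\overline{\mathfrak{C}}_n$, and $\alpha(c(\delta(T)))=s(T)$.

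The main obstacle is this step: confirming that the ordering hypothesis on $T'$ forces the scores of $T+T'$ to be distinct, so that the multiset $C(\delta(T))$ is a genuine set and $\alpha$ applies. Everything else is a direct invocation of Lemma \ref{lem:ttop}. One should also note that $\delta$ depends on choices of $T'$ and of the TTD; we simply fix one choice for each $T\in\mathcal{T}$.
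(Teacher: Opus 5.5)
Your proposal is correct and follows essentially the same route as the paper. The paper likewise relabels so that the $T'$-scores are $0,\dots,n-1$ and uses the compatibility condition to get $S(T+T')=\{s_j+j : j\in Z_n\}$, i.e.\ $s(T+T')=\alpha^{-1}(s(T))$; it then takes $\delta(T)$ to be the permutation $\phi_{T_2}\phi_{T_1}^{-1}$ from Lemma~\ref{lem:ttop}, so, as you note, the hypothesis $(\alpha\circ s)(T+T')=s(T)$ is never actually used.

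One small point concerns the domain. If you want $(\alpha\circ c\circ\delta)(T)=s(T)$ to hold on all of $\mathcal{T}_n$ as literally stated, do not fill in an arbitrary fixed permutation off $\mathcal{T}$, since that breaks the identity there. Instead set $\delta(T)=\delta(\tilde T)$ for some $\tilde T\in\mathcal{T}$ with $s(\tilde T)=s(T)$, which exists because $s(\mathcal{T})=\mathfrak{S}_n$.
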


Note that the latter condition relating $T$ to $T'$ is analogous to embedding a partial ordering of $Z_n$ in a total ordering.
For a tournament $W\in\mathcal{T}_n$, let $(Z_n,\prec_W)$ be the partial ordering of $Z_n$ such that for each $x,y\in Z_n$, $x \prec_W y$ if and only if $s_W(x) < s_W(y)$.
Then the latter condition on $T'$ in Lemma \ref{lem:otherway} is that $(Z_n,\prec_{T'})$ is a total ordering in which $(Z_n,\prec_T)$ is embedded.

\begin{proof}
Let $T \in \mathcal{T}$, and let $T'\in \mathcal{T}_n$ be transitive and satisfy the given hypotheses.
Without loss of generality, we relabel the vertices of $T$ and $T'$ so that $T' = T(\epsilon)$; that is $s_{T'}(i)=i$ for each $i\in Z_n$.
We may simply revert to the old labeling at the end of the proof, otherwise.

Let $s(T) = (s_0,\dots,s_{n-1})$ with $s_0 \leq  \cdots \leq s_{n-1}$.
Let $\rho\in S_n$ such that $T' = T(\rho)$.
So if $\rho(x) < \rho(y)$ for some $x,y\in Z_n$, then $s_T(x) \leq s_T(y)$.
Hence $s_{\rho(i)} = s_T(i)$ for each $i\in Z_n$.
Define $T'' = T + T'$.
Then $T''\in \mathcal{T}_n^2$, and 
\begin{align*}
S(T'') 
  &= \{ s_{T}(i) + s_{T'}(i): i\in Z_n\}
\\&= \{ s_{\rho(i)} + \rho(i): i\in Z_n\}
\\&= \{ s_j + j: j\in Z_n\}.
\end{align*}
So $s(T'') = (\alpha^{-1}\circ s)(T)$.
Since $T''$ has a TTD from the hypotheses, by Lemma~\ref{lem:ttop}, there exists $\phi\in S_n$ such that $c(\phi) = s(T'')$.
Define $\delta(T) = \phi$.
Then
\begin{equation*}
  (\alpha\circ c\circ \delta)(T)
= (\alpha\circ c)(\phi)
= (\alpha\circ s)(T'')
= (\alpha\circ \alpha^{-1}\circ s)(T)
= s(T).\qedhere
\end{equation*}
\end{proof}

\begin{example}
\label{ex:tourn2phi}
Let $T\in \mathcal{T}_4$ as given by the matrix in Figure \ref{fig:tourn}(b).
Then $s(T) = (1,1,2,2)$.
Let $T'\in \mathcal{T}_4$ be the transitive tournament given by the matrix in Figure \ref{fig:tourn}(a).
Then $T' = T(2013)$.
Observe that $(Z_n,\prec_T)$ embeds in $(Z_n,\prec_{T'})$.
Then $T+T'\in \mathcal{T}_4^2$, as given in Figure \ref{fig:tourn2phi}(a), and $T+T'$ has a TTD given by $T(1023) + T(3102)$, as given in Figures \ref{fig:tourn2phi}(b) and (c).
Using the method outlined in the proof of Lemma \ref{lem:ttop}, the permutation associated to this TTD is $\phi = 3102 \circ 1023^{-1} = 1302$.
\end{example}
\begin{figure}
\centering
\begin{tabular}{@{}ccc@{}}
\begin{tikzpicture}[line width=1pt,scale=1.2]
\coordinate (0) at (0,0);
\coordinate (3) at (0,1.3);
\coordinate (1) at (1,0);
\coordinate (2) at (1,1.3);
\node[circle,draw,minimum width=13pt] (a0) at (0) {};
\node[circle,draw,minimum width=13pt] (a1) at (1) {};
\node[circle,draw,minimum width=13pt] (a2) at (2) {};
\node[circle,draw,minimum width=13pt] (a3) at (3) {};
\node at (0) {0};
\node at (1) {1};
\node at (2) {2};
\node at (3) {3};
\draw[->,lightgray] (a0) to[bend right=10] (a3);
\draw[->,lightgray] (a0) to[bend left=10] (a1);
\draw[->,lightgray] (a0) to[bend left=10] (a2);
\draw[->,lightgray] (a3) to[bend left=10] (a1);
\draw[->,lightgray] (a3) to[bend right=10] (a2);
\draw[->,lightgray] (a1) to[bend left=10] (a2);
\draw[->] (a3) to[bend left=10] (a2);
\draw[->] (a3) to[bend right=10] (a0);
\draw[->] (a3) to[bend right=10] (a1);
\draw[->] (a2) to[bend left=10] (a0);
\draw[->] (a2) to[bend left=10] (a1);
\draw[->] (a0) to[bend right=10] (a1);
\end{tikzpicture}
\begin{tikzpicture}
\node at (0,0) {\begin{tabular}{|@{\ }c@{\ }|@{\ }c@{\ }|@{\ }c@{\ }|@{\ }c@{\ }|}
\hline
0 & 2 & 1 & 1\\\hline
0 & 0 & 1 & 0\\\hline
1 & 1 & 0 & 0\\\hline
1 & 2 & 2 & 0\\\hline
\end{tabular}};
\end{tikzpicture}
&
\begin{tikzpicture}[line width=1pt,scale=1.2]
\coordinate (0) at (0,0);
\coordinate (3) at (0,1.3);
\coordinate (1) at (1,0);
\coordinate (2) at (1,1.3);
\node[circle,draw,minimum width=13pt] (a0) at (0) {};
\node[circle,draw,minimum width=13pt] (a1) at (1) {};
\node[circle,draw,minimum width=13pt] (a2) at (2) {};
\node[circle,draw,minimum width=13pt] (a3) at (3) {};
\node at (0) {0};
\node at (1) {1};
\node at (2) {2};
\node at (3) {3};
\draw[->] (a3) to[bend left=10] (a2);
\draw[->] (a3) to[bend right=10] (a0);
\draw[->] (a3) to[bend right=10] (a1);
\draw[->] (a2) to[bend left=10] (a0);
\draw[->] (a2) to[bend left=10] (a1);
\draw[->] (a0) to[bend right=10] (a1);
\end{tikzpicture}
\begin{tikzpicture}
\node at (0,0) {\begin{tabular}{|@{\ }c@{\ }|@{\ }c@{\ }|@{\ }c@{\ }|@{\ }c@{\ }|}
\hline
0 & 1 & 0 & 0\\\hline
0 & 0 & 0 & 0\\\hline
1 & 1 & 0 & 0\\\hline
1 & 1 & 1 & 0\\\hline
\end{tabular}};
\end{tikzpicture}
&
\begin{tikzpicture}[line width=1pt,scale=1.2]
\coordinate (0) at (0,0);
\coordinate (3) at (0,1.3);
\coordinate (1) at (1,0);
\coordinate (2) at (1,1.3);
\node[circle,draw,minimum width=13pt] (a0) at (0) {};
\node[circle,draw,minimum width=13pt] (a1) at (1) {};
\node[circle,draw,minimum width=13pt] (a2) at (2) {};
\node[circle,draw,minimum width=13pt] (a3) at (3) {};
\node at (0) {0};
\node at (1) {1};
\node at (2) {2};
\node at (3) {3};
\draw[->,lightgray] (a0) to[bend right=10] (a3);
\draw[->,lightgray] (a0) to[bend left=10] (a1);
\draw[->,lightgray] (a0) to[bend left=10] (a2);
\draw[->,lightgray] (a3) to[bend left=10] (a1);
\draw[->,lightgray] (a3) to[bend right=10] (a2);
\draw[->,lightgray] (a1) to[bend left=10] (a2);
\end{tikzpicture}
\begin{tikzpicture}
\node at (0,0) {\begin{tabular}{|@{\ }c@{\ }|@{\ }c@{\ }|@{\ }c@{\ }|@{\ }c@{\ }|}
\hline
0 & 1 & 1 & 1\\\hline
0 & 0 & 1 & 0\\\hline
0 & 0 & 0 & 0\\\hline
0 & 1 & 1 & 0\\\hline
\end{tabular}};
\end{tikzpicture}
\\
(a) & (b) & (c)\end{tabular}
\caption{The $2$-tournament (a) constructed in Example \ref{ex:tourn2phi}, and two transitive tournaments $T(1023)$ (b) and $T(3102)$ (c) which comprise a TTD of the $2$-tournament.}
\label{fig:tourn2phi}
\end{figure}

As the next example shows, the choice of $\mathcal{T}$ in Lemma \ref{lem:otherway} must exclude certain tournaments in $\mathcal{T}_n$.

\begin{example}
\label{ex:badfive}
Let $T_1\in\mathcal{T}_5$ be the tournament given in Figure \ref{fig:11233}.
\begin{figure}
\centering
\begin{tabular}{@{}c@{\qquad}c@{}}
\begin{tikzpicture}[line width=1pt,scale=1.2]
\coordinate (0) at ({90-0*72}:1.125);
\coordinate (1) at ({90-1*72}:1.125);
\coordinate (2) at ({90-2*72}:1.125);
\coordinate (3) at ({90-3*72}:1.125);
\coordinate (4) at ({90-4*72}:1.125);
\node[circle,draw,minimum width=13pt] (a0) at (0) {};
\node[circle,draw,minimum width=13pt] (a1) at (1) {};
\node[circle,draw,minimum width=13pt] (a2) at (2) {};
\node[circle,draw,minimum width=13pt] (a3) at (3) {};
\node[circle,draw,minimum width=13pt] (a4) at (4) {};
\node at (0) {0};
\node at (1) {1};
\node at (2) {2};
\node at (3) {3};
\node at (4) {4};
\draw[->] (a4) to (a3);
\draw[->] (a4) to (a2);
\draw[->] (a4) to (a1);
\draw[<-,double] (a4) to (a0);
\draw[->] (a3) to (a2);
\draw[->] (a3) to (a1);
\draw[->] (a3) to (a0);
\draw[->] (a2) to (a1);
\draw[->] (a2) to (a0);
\draw[->] (a1) to (a0);
\end{tikzpicture}
&
\begin{tikzpicture}[line width=1pt,scale=1.2]
\coordinate (0) at ({90-0*72}:1.125);
\coordinate (1) at ({90-1*72}:1.125);
\coordinate (2) at ({90-2*72}:1.125);
\coordinate (3) at ({90-3*72}:1.125);
\coordinate (4) at ({90-4*72}:1.125);
\node[circle,draw,minimum width=13pt] (a0) at (0) {};
\node[circle,draw,minimum width=13pt] (a1) at (1) {};
\node[circle,draw,minimum width=13pt] (a2) at (2) {};
\node[circle,draw,minimum width=13pt] (a3) at (3) {};
\node[circle,draw,minimum width=13pt] (a4) at (4) {};
\node at (0) {0};
\node at (1) {1};
\node at (2) {2};
\node at (3) {3};
\node at (4) {4};
\draw[<-,double] (a4) to (a3);
\draw[->] (a4) to (a2);
\draw[->] (a4) to (a1);
\draw[->] (a4) to (a0);
\draw[<-,double] (a3) to (a2);
\draw[->] (a3) to (a1);
\draw[->] (a3) to (a0);
\draw[<-,double] (a2) to (a1);
\draw[->] (a2) to (a0);
\draw[<-,double] (a1) to (a0);
\end{tikzpicture}
\\
$T_1$ & $T_2$
\end{tabular}
\caption{The two nonisomorphic tournaments with score sequence $(1,1,2,3,3)$.
The edges in bold highlight the edges necessary to be reversed to yield the  transitive tournament $T_{01234}$.}
\label{fig:11233}
\end{figure}
Note that $0 >_{T_1} 4$.
Let $T'\in\mathcal{T}_5$ be a transitive tournament such that, if $s_{T_1}(x)< s_{T_1}(y)$, then $x <_{T'} y$.
Then $T'=T(\phi)$ for some $\phi\in\{01234,10234,01243,10243\}$.
Suppose that $T_1+T'=R_1+R_2$ for transitive $R_1,R_2\in\mathcal{T}_5$, and without loss of generality, assume $0 >_{R_1} 4$.
Observe that regardless of $T'$, $s_{R_1+R_2}(0) \leq 2$ and $s_{R_1+R_2}(4) \geq 6$.
Then $s_{R_1}(0)\leq 2$, and hence $s_{R_1}(4) \leq 1$.
However, this implies $s_{R_2}(4)\geq 5$, which contradicts that $s_{R_2}(4) \leq 4$.
So $T_1+T'$ does not have a TTD, and hence $T_1$ would not belong to the domain of $\delta$.
\end{example}

In the previous example, observe that $s(T_1)=(1,1,2,3,3)$.
In fact, there are two non-isomorphic tournaments $T_1,T_2\in\mathcal{T}_5$ such that $s(T_1)=s(T_2)=(1,1,2,3,3)$.
See Figure \ref{fig:11233}.
Observe that $T_2+T(01234)$ is decomposable as $T(02143)+T(10324)$, and hence could belong to the domain of $\delta$.
So the choice of $\mathcal{T}$ in Lemma \ref{lem:otherway} is critical in this method of proof of Conjecture \ref{conj:bebeacua}.

\subsection[Permutations and k-tournaments]{Permutations and $k$-tournaments}
\label{sec:ktourn}

Recall that for all positive integers $n$, $\mathfrak{S}_n$ is in bijection with $\overline{\mathfrak{S}_n^2}$; this has a natural generalization, which we prove below for completion.
To that end, for positive integers $n$ and $k$, let $\overline{\mathfrak{S}^k_n}$ denote the subset of increasing sequences in~$\mathfrak{S}_n^k$.

\begin{lemma}
\label{lem:ktok-1}
Let $n$ and $k$ be positive integers.
Then $\alpha:\overline{\mathfrak{S}_n^k} \to \mathfrak{S}_n^{k-1}$, where $\alpha(s_0,\dots,s_{n-1}) = (s'_0,\dots,s'_{n-1})$ with $s'_i = s_i - i$ for each $i\in Z_n$, is a bijection.
\end{lemma}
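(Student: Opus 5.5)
Since Theorem \ref{thm:klandau} characterizes $\mathfrak{S}_n^k$ and $\mathfrak{S}_n^{k-1}$ purely numerically, I will argue at the level of sequences: check that $\alpha$ maps into $\mathfrak{S}_n^{k-1}$, that it is injective, and that it is surjective. (For $k=1$ one reads $\mathfrak{S}_n^0$ as the single all-zero sequence, which is consistent since the only increasing sequence in $\mathfrak{S}_n^1$ is $(0,1,\dots,n-1)$; I would dispose of this degenerate case first, or just assume $k\ge 2$.) Throughout I index the Landau conditions as partial sums of the first $t$ terms $s_0,\dots,s_{t-1}$, matching \eqref{eq:xraycondition}.

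\emph{Well-defined.} Let $s=(s_0,\dots,s_{n-1})\in\overline{\mathfrak{S}_n^k}$. Since $s$ is strictly increasing and integer-valued, $s_{i+1}\ge s_i+1$, so $s'_{i+1}=s_{i+1}-(i+1)\ge s_i-i=s'_i$; thus $s'$ is nondecreasing. For each $t$,
\begin{equation*}
\sum_{i=0}^{t-1} s'_i=\sum_{i=0}^{t-1}s_i-\binom t2\ge k\binom t2-\binom t2=(k-1)\binom t2,
\end{equation*}
with equality at $t=n$ because equality holds for $s$. By Theorem \ref{thm:klandau}, $s'\in\mathfrak{S}_n^{k-1}$.

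\emph{Bijective.} Injectivity is immediate since $s_i=s'_i+i$. For surjectivity, take any $s'\in\mathfrak{S}_n^{k-1}$ (nondecreasing) and define $s_i=s'_i+i$. Then $s_{i+1}-s_i=s'_{i+1}-s'_i+1\ge1$, so $s$ is increasing, and the same computation run backwards gives $\sum_{i<t}s_i=\sum_{i<t}s'_i+\binom t2\ge k\binom t2$, with equality at $t=n$. Hence $s\in\overline{\mathfrak{S}_n^k}$ and $\alpha(s)=s'$.

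There is no real obstacle; the only point needing care is the monotonicity transfer, which uses strictness of $s$ in one direction and nondecreasingness of $s'$ in the other. This is exactly why the domain must be restricted to \emph{increasing} sequences. An alternative structural proof would mimic Theorem \ref{thm:factoralpha}: peel off a transitive tournament $T(\rho)$ from a $k$-tournament with increasing scores. However, that requires showing $\omega_T(j,i)\ge1$ whenever $s_T(j)>s_T(i)$, which fails in general, so the numerical route via Theorem \ref{thm:klandau} is the one I would follow.
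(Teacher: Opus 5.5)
Your proposal is correct and follows the paper's proof: subtract $i$ termwise, check the inequalities of Theorem~\ref{thm:klandau} with $\binom t2$ moved across, note injectivity, and get surjectivity by adding $i$ back. You are in fact slightly more careful than the paper. You explicitly verify that $s'$ is nondecreasing, where the paper only remarks that it is nonnegative, and you address the degenerate case $k=1$.
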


\begin{proof}
Let $s=(s_0,\dots,s_{n-1})\in \overline{\mathfrak{S}_n^k}$.
Then $s$ is nonnegative and increasing, meaning for each $i\in Z_n$, $s_i\geq i$, and hence $\alpha(s)$ is also a nonnegative sequence.
Let $t\in \{1,\dots,n\}$.
Then 
\begin{equation*}
\sum_{i=0}^{t-1} s_i \geq k\cdot\binom t2.
\end{equation*}
So
\begin{equation*}
\sum_{i=0}^{t-1} s'_i 
= \sum_{i=0}^{t-1} (s_i - i) 
= \sum_{i=0}^{t-1} s_i - \binom t2
\geq k\cdot\binom t2 - \binom t2
= (k-1)\cdot\binom t2,
\end{equation*}
with equality when $t=n$.
Hence $\alpha(s)\in \mathfrak{S}_n^{k-1}$, and clearly $\alpha$ is injective.

Now let $s'=(s'_0,\dots,s'_{n-1})\in \mathfrak{S}_n^{k-1}$.
Define $s = (s_0,\dots,s_{n-1})$, where $s_i = s'_i + i$ for each $i\in Z_n$.
Then $s$ is strictly increasing, and for each $t\in \{1,\dots, n\}$,
\begin{equation*}
\sum_{i=0}^{t-1} s_i 
= \sum_{i=0}^{t-1} s'_i + \sum_{i=0}^{t-1} i
= \sum_{i=0}^{t-1} s'_i + \binom t2
\geq (k-1)\cdot\binom t2 + \binom t2
= k\cdot\binom t2,
\end{equation*}
with equality when $t=n$.
Hence $s\in \overline{\mathfrak{S}_n^k}$, and clearly $\alpha(s) = s'$.
So $\alpha$ is a bijection.
\end{proof}

Let $n$ and $k$ be positive integers and $s\in \mathfrak{S}_n^k$.
Let $\{a_1,\dots,a_t\}$ be the largest set of unique terms in $s$.
We define the \emph{shape} of $s$ as the partition $\{p_1,\dots,p_t\}$ of $n$ such that $a_i$ has $p_i$ occurrences in $s$ for each $i\in\{1,\dots,t\}$.
For a partition $\lambda$ of $n$, define $\mathfrak{S}^k(\lambda)$ as the subset of $\mathfrak{S}_n^k$ which contains all sequences with shape $\lambda$, and for set $\Gamma$ of partitions, let $\mathfrak{S}^k(\Gamma)$ as the union of the sets $\mathfrak{S}^k(\lambda)$ for each $\lambda\in \Gamma$.
We similarly define $\mathfrak{C}(\lambda)$ and $\mathfrak{C}(\Gamma)$.
In particular, observe that $\overline{\mathfrak{C}}_n = \mathfrak{C}(1^n)$ and $\overline{\mathfrak{S}_n^k} = \mathfrak{S}^k(1^n)$.

Bebeacua et al.~\cite{bebeacua} made the following conjecture, based solely on the sequences having matching values for small $n$.\footnote{There is a small typo in the original statement of the conjecture by Bebeacua et al.~in that the bijection involves permutations of $S_{2n}$ rather than $S_n$.}
In what follows, when given a multiset $X$ and a positive integer $k$, we denote the multiset union of $k$ copies of $X$ as $\cup_kX$, and let $kX$ denote the multiset obtained by multiplying each element of $X$ by $k$.

\begin{conjecture}
\label{conj:S3Conj}
For every positive integer $n$, $|\mathfrak{C}(2^n)| = |\mathfrak{S}_n^3|$.
\end{conjecture}

An equivalent statement of this conjecture, using the correspondence given in Lemma \ref{lem:ktok-1}, is the following.

\begin{conjecture}\label{conj:D4Conj}
For every positive integer $n$, $|\mathfrak{C}(2^n)| = |\mathfrak{S}^4(1^n)|$.
\end{conjecture}

Conjecture \ref{conj:D4Conj} seems to be the more natural formulation when compared to Conjecture \ref{conj:S3Conj}.
Indeed, suppose $\phi\in S_{2n}$ and $c(\phi)\in\mathfrak{C}(2^n)$.
Then $c(\phi)$ has the form 
$(c_0:2)(c_1:2)\cdots(c_{n-1}:2)$.
Consider the necessarily increasing sequence $(c_0-1,c_1-1,\dots,c_{n-1}-1)$.
For each $t\in\{1,\dots,n\}$,
\begin{equation*}
\sum_{i=0}^{t-1}(c_i-1) = \sum_{i=0}^{t-1}c_i - t 
=\frac12\sum_{i=0}^{t-1}2c_i - t \geq \binom{2t}2 - t = 4\binom t2,
\end{equation*}
with equality when $t=n$.
Hence $(c_0-1,c_1-1,\dots,c_{n-1}-1)\in \mathfrak{S}^4(1^n)$, giving a direct interpretation of these conjectures by relating xrays to score sequences.
This is, in fact, a special case of an application of the following proposition.

\begin{proposition}
\label{prop:RtokR}
Let $n$ be a positive integer and $\lambda$ be a partition of $n$.
Then $|\mathfrak{S}^2(k\lambda)| = |\mathfrak{S}^{2k}(\lambda)|$.
\end{proposition}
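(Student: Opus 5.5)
The plan is an explicit bijection $\beta:\mathfrak{S}^2(k\lambda)\to\mathfrak{S}^{2k}(\lambda)$ that generalizes the computation just given for Conjecture~\ref{conj:D4Conj}, which is the case $k=2$, $\lambda=1^n$. Throughout I read $k\lambda$ as the partition of $kn$ obtained by multiplying each part of $\lambda$ by $k$. Both sides are then characterized by Theorem~\ref{thm:klandau}, so the whole argument is a comparison of the two families of Landau-type inequalities.

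\textbf{The map.} Let $s\in\mathfrak{S}^2(k\lambda)$. Every value of $s$ occurs a multiple of $k$ times, so $s$ splits into $n$ consecutive groups of $k$ equal terms:
\[
s=(b_0:k)(b_1:k)\cdots(b_{n-1}:k), \qquad b_0\le\cdots\le b_{n-1}.
\]
Define $\beta(s)=(b_0-(k-1),\dots,b_{n-1}-(k-1))$. This sequence is nondecreasing. Since a value occurring $kp$ times in $s$ gives a value occurring $p$ times in $\beta(s)$, its shape is $\lambda$. The inverse is clear: add $k-1$ to each term and repeat each term $k$ times. So $\beta$ is injective, and it only remains to check that $\beta$ and its inverse land in the right sets.

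\textbf{Equivalence of the inequalities.} For a partial sum of $s$ ending at position $m=kt$, we have
\[
\sum_{i=0}^{kt-1}s_i = k\sum_{i=0}^{t-1}b_i .
\]
The condition of Theorem~\ref{thm:klandau} with $k=2$ at $m=kt$ reads $k\sum_{i<t}b_i\ge kt(kt-1)$. Using
\[
t(kt-1)=2k\binom t2+(k-1)t,
\]
this becomes
\[
\sum_{i=0}^{t-1}\bigl(b_i-(k-1)\bigr)\ \ge\ 2k\binom t2 .
\]
This is exactly the condition for $\beta(s)$ in $\mathfrak{S}^{2k}_n$ at index $t$. Equality at $m=kn$ corresponds to equality at $t=n$. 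Nonnegativity is not an issue, since the $t=1$ inequality forces the first term to be nonnegative.

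\textbf{Main obstacle.} Theorem~\ref{thm:klandau} for $s$ requires the inequality at every $m\in\{1,\dots,kn\}$, not only at multiples of $k$. So in the direction $\beta^{-1}$, where we start from a sequence in $\mathfrak{S}^{2k}(\lambda)$, I must show that the intermediate inequalities follow from those at $m=kt$. The plan is a concavity argument. Set
\[
f(m)=\sum_{i<m}s_i-m(m-1).
\]
Within a group of constant value $b$, the increments are $f(m+1)-f(m)=b-2m$, which decrease as $m$ increases. Hence $f$ is concave on each group $[kt,k(t+1)]$, and its minimum there is attained at an endpoint. Therefore $f\ge 0$ at all multiples of $k$ implies $f\ge 0$ everywhere. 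The forward direction needs no such argument, since it only uses the inequalities at $m=kt$. With both directions in hand, $\beta$ is a bijection, and $|\mathfrak{S}^2(k\lambda)|=|\mathfrak{S}^{2k}(\lambda)|$.
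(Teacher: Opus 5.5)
Your proposal is correct and takes essentially the paper's route: the same bijection (add $k-1$ to each term and repeat it $k$ times, with the obvious inverse), and the direction out of $\mathfrak{S}^2(k\lambda)$ uses only the partial sums at multiples of $k$. For the intermediate indices $t=qk+r$, the paper interpolates explicitly between the inequalities at $q$ and $q+1$ and obtains the slack $r(k-r)$, which is a concrete form of your concavity argument on each block.
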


\begin{proof}
Let $s\in\mathfrak{S}^{2k}(\lambda)$, and let $s=(s_0,\dots,s_{n-1})$.
Define $s'$ as the sequence with shape $k\lambda$ given by $(s_0+k-1:k)\cdots(s_{n-1}+k-1:k)$.
Let $t\in Z_{kn}$, and let $q$ and $r$ be integers such that $t = qk + r$ and $0\leq r\leq k-1$.
Since $s_0 \leq \cdots \leq s_{q}$ and $s_q(q+1)\geq s_0 + \cdots + s_q \geq k(q+1)q$, we have that $s_q \geq kq$.
Then
\begin{align*}
\sum_{i=0}^{t-1} s'_i 
&= \sum_{i=0}^{qk-1} s'_i + \sum_{i=0}^{r-1} s'_{qk+i}
= k\sum_{i=0}^{q-1}(s_i+(k-1)) + r(s_q+(k-1))
\\&= r \sum_{i=0}^{q} s_{i} + (k-r) \sum_{i=0}^{q-1} s_{i} + qk(k-1) + r(k-1)
\\&\geq rkq(q+1) + (k-r)(kq)(q-1) + qk^{2} - qk + rk -r
\\&= t^{2}-t + r(k-r)\\
&\geq t^2-t,
\end{align*}
with equality when $t=kn$.
Hence $s'\in \mathfrak{S}^2(k\lambda)$, and therefore the map $s\to s'$ is an injection from $\mathfrak{S}^{2k}(\lambda)$ into $\mathfrak{S}^2(k\lambda)$.

Now suppose $s'\in \mathfrak{S}^{2}(k\lambda)$.
Let $S$ be the (multi)set of $n$ elements such that $s'$ is a nondecreasing sequence of the terms in $\cup_kS$; let $\tilde{s} = (\tilde{s}_0,\dots,\tilde{s}_{n-1})$ be the nondecreasing sequence comprised of the elements of $S$.
Then $\tilde{s}$ has shape $\lambda$.
Since $s'\in \mathfrak{S}_{kn}^2$, we have that, for each $t\in\{1,\dots,n\}$, the sum of the first $kt$ terms of $s'$ is at least $(kt)^2-(kt)$ with equality when $t=n$.
Therefore, for each $t\in Z_n$,
\begin{equation*}
\sum_{i=0}^{t-1} \tilde{s}_i = \dfrac1k\sum_{i=0}^{kt-1}s'_i \geq \dfrac1k\left[(kt)^2-kt\right] = kt^2-t.
\end{equation*}
In particular, when $t=1$, we have that $\tilde{s}_0 \geq k-1$, and hence $\tilde{s}_i\geq k-1$ for each $i\in Z_n$.
Define $s$ as the sequence of length $n$ such that $s_i = \tilde{s}_i-k+1$ for each $i\in Z_n$.
Since $s$ is a translation of $\tilde{s}$, it follows that $s$ also has shape $\lambda$.
Furthermore, $s$ is a sequence of nondecreasing integers such that, for each $t\in\{1,\dots,n\}$,
\begin{equation*}
\sum_{i=0}^{t-1} s_i = \left(\sum_{i=0}^{t-1} \tilde{s}_i\right) - t(k-1)
\geq kt^2 - t - t(k-1) 
= 2k\cdot \binom t2,
\end{equation*}
with equality when $t=n$.
Hence $s\in \mathfrak{S}^{2k}(\lambda)$, which implies that the map $s'\to s$ is an injection from $\mathfrak{S}^2(k\lambda)$ into $\mathfrak{S}^{2k}(\lambda)$.
Therefore $|\mathfrak{S}^2(k\lambda)|=|\mathfrak{S}^{2k}(\lambda)|$.
\end{proof}

Observe that for each positive integer $n$, since the sequences in $\mathfrak{C}_n$ satisfy the same conditions as those satisfied by $\mathfrak{S}_n^2$, then necessarily $\mathfrak{C}_n \subseteq \mathfrak{S}_n^2$.
This observation, coupled with the previous proposition, admits the following corollary.

\begin{corollary}
Let $n$ and $k$ be positive integers and $\lambda$ be a partition of $n$.
Then $\mathfrak{C}(\lambda) \subseteq \mathfrak{S}^2(\lambda)$, and hence $\mathfrak{C}(k\lambda) \subseteq \mathfrak{S}^{2}(k\lambda)$, so $|\mathfrak{C}(k\lambda)|\leq |\mathfrak{S}^{2k}(\lambda)|$.
In particular, $|\mathfrak{C}(2^n)| \leq |\mathfrak{S}^4(1^n)|$.
\end{corollary}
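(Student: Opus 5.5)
The corollary is a chain of three facts, and I would prove it in that order.

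\emph{Step 1: $\mathfrak{C}(\lambda)\subseteq\mathfrak{S}^2(\lambda)$.} Take $c\in\mathfrak{C}(\lambda)$, so $c=c(\phi)$ for some $\phi\in S_n$ and $c$ has shape $\lambda$. There are two routes to membership in $\mathfrak{S}_n^2$.
\begin{itemize}
\item Lemma~\ref{lem:xrayconditions} shows that $c$ satisfies the inequalities \eqref{eq:xraycondition}. These are exactly the conditions \eqref{eq:klandaucondition} of Theorem~\ref{thm:klandau} with $k=2$, so $c\in\mathfrak{S}_n^2$.
\item Alternatively, Lemma~\ref{lem:2tourn} supplies $T^2(\phi)\in\mathcal{T}_n^2$ with $s(T^2(\phi))=c(\phi)$.
\end{itemize}
In either case, being a member of $\mathfrak{S}^2(\lambda)$ or $\mathfrak{C}(\lambda)$ depends only on the sequence's shape. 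So $c$ has shape $\lambda$ and lies in $\mathfrak{S}_n^2$, giving $c\in\mathfrak{S}^2(\lambda)$.

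\emph{Step 2: the $k\lambda$ case.} Here $\lambda$ is a partition of $n$ and $k\lambda$ is a partition of $kn$. I would apply Step 1 with $kn$ in place of $n$ and the partition $k\lambda$ in place of $\lambda$. This gives $\mathfrak{C}(k\lambda)\subseteq\mathfrak{S}^2(k\lambda)$, where both sets sit inside sequences of length $kn$.

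\emph{Step 3: the cardinality bound.} Taking cardinalities gives $|\mathfrak{C}(k\lambda)|\le|\mathfrak{S}^2(k\lambda)|$. Proposition~\ref{prop:RtokR} gives $|\mathfrak{S}^2(k\lambda)|=|\mathfrak{S}^{2k}(\lambda)|$, so $|\mathfrak{C}(k\lambda)|\le|\mathfrak{S}^{2k}(\lambda)|$. For the final claim, set $k=2$ and $\lambda=1^n$. Then $k\lambda=2^n$, and we get $|\mathfrak{C}(2^n)|\le|\mathfrak{S}^4(1^n)|$.

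The only real care needed is bookkeeping. We must track that $\mathfrak{C}(k\lambda)$ refers to permutations in $S_{kn}$, matching the convention in the discussion before Proposition~\ref{prop:RtokR}. We must also check that the proposition's proof covers the full range of $t$ for sequences of length $kn$. Beyond this there is no substantive obstacle, since every ingredient has already been established.
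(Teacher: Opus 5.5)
Your proposal is correct and matches the paper's argument. The paper also obtains $\mathfrak{C}_n\subseteq\mathfrak{S}_n^2$ from Lemma~\ref{lem:xrayconditions} and Theorem~\ref{thm:klandau} (your $T^2(\phi)$ route via Lemma~\ref{lem:2tourn} works equally well), notes that shape is preserved, applies this to the partition $k\lambda$ of $kn$, and then invokes Proposition~\ref{prop:RtokR}. The bookkeeping point you flag about the case $t=kn$ in that proposition's proof is harmless, since taking $q=n$ and $r=0$ gives equality directly.
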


The previous corollary makes some inroads on explaining why Conjecture \ref{conj:D4Conj} might be true.
We note that a modest generalization of this conjecture is not true, which we highlight below. 

\begin{corollary}
\label{cor:0k}
Let $n$ and $k$ be positive integers with $n\geq 2$ and $k\geq 3$.
Then $\mathfrak{C}(k^n)$ is a proper subset of $\mathfrak{S}^{2}(k^n)$, and hence $|\mathfrak{C}(k^n)| < |\mathfrak{S}^{2k}(1^n)|$.
\end{corollary}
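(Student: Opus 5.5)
The plan is to exhibit, for every $n\geq 2$ and $k\geq 3$, one sequence in $\mathfrak{S}^2(k^n)$ that is not the characteristic sequence of any permutation in $S_{kn}$. The properness $\mathfrak{C}(k^n)\subsetneq \mathfrak{S}^2(k^n)$ then follows from the inclusion in the preceding corollary. The strict inequality $|\mathfrak{C}(k^n)| < |\mathfrak{S}^{2k}(1^n)|$ follows from Proposition~\ref{prop:RtokR} with $\lambda=1^n$, since $k\cdot 1^n = k^n$.

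The key observation is that Example~\ref{ex:333777ex}, with its parameter equal to our $k$, already gives the case $n=2$. There the sequence $(k:k)(3k-2:k)$ lies in $\mathfrak{S}^2_{2k}$ and has shape $k^2$. No $2$-tournament with this score sequence has a TTD. On the other hand, if $c(\phi)=(k:k)(3k-2:k)$ for some $\phi\in S_{2k}$, then by Lemma~\ref{lem:2tourn} the $2$-tournament $T^2(\phi)=T(\epsilon)+T(\phi)$ would have a TTD and score sequence $c(\phi)$, which is a contradiction. So $(k:k)(3k-2:k)\in \mathfrak{S}^2(k^2)\setminus\mathfrak{C}(k^2)$.

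For general $n\geq 2$ I would extend this by appending blocks. Take $s=(s_0,\dots,s_{n-1})$ with $s_0=1$, $s_1=2k-1$, and $s_i=2ki$ for $2\leq i\leq n-1$. This sequence is strictly increasing because $k\geq 3$. Its prefix sums are $2k+2k\sum_{i=2}^{t-1} i = 2k\binom t2$ for every $t\geq 2$, and the first term is $1\geq 0$. Hence $s\in\mathfrak{S}^{2k}(1^n)$ by Theorem~\ref{thm:klandau}. Its image under the map in the proof of Proposition~\ref{prop:RtokR} is
\[
s'=(k:k)(3k-2:k)(2k\cdot 2+k-1:k)\cdots(2k(n-1)+k-1:k)\in\mathfrak{S}^2(k^n).
\]
Suppose $c(\phi)=s'$ for some $\phi\in S_{kn}$. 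The sum of the first $2k$ terms of $s'$ is $k\cdot k+k(3k-2)=4k^2-2k=2\binom{2k}{2}$. So by Lemma~\ref{lem:PermReduc} with $t=2k$, $\phi=\phi_1\phi_2$ with $\phi_1\in S_{2k}$. The antidiagonal indices of $\phi_1$ are exactly the first $2k$ entries of $c(\phi)$, because the block structure puts those $1$s in the top-left $2k\times 2k$ block and all others strictly later. Hence $c(\phi_1)=(k:k)(3k-2:k)$, which contradicts the $n=2$ case.

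The main point needing care is this last identification: that the multiset of indices from the $\phi_1$ block is precisely the first $2k$ smallest terms. Entries of the lower block have indices at least $2\cdot 2k$, while every $\phi_1$ index is at most $4k-2$, so the two multisets separate cleanly. Everything else is a routine check of Landau-type inequalities.
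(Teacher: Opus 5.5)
Your proposal is correct and follows essentially the same route as the paper, which uses the identical sequence $(k:k)(3k-2:k)(5k-1:k)\cdots(2nk-k-1:k)$, splits off the first $2k$ positions via Lemma~\ref{lem:PermReduc}, and appeals to Example~\ref{ex:333777ex}. Your additions are cosmetic but welcome: you obtain membership in $\mathfrak{S}^2(k^n)$ as the image of an explicit element of $\mathfrak{S}^{2k}(1^n)$ under the map of Proposition~\ref{prop:RtokR}, and you make explicit, via Lemma~\ref{lem:2tourn}, why the TTD obstruction in the example rules out a permutation.
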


\begin{proof}
Let $s=(k:k)(3k-2:k)(5k-1:k)(7k-1:k)\cdots(2nk-k-1:k)$.
A direct computation shows that $s\in\mathfrak{S}^2(k^n)$.
Assume that $s\in\mathfrak{C}(k^n)$; then there exists a permutation $\phi\in S_{kn}$ such that $C(\phi)=s$.
Then necessarily, by Lemma~\ref{lem:PermReduc}, $P_\phi$ is a block diagonal matrix $P_{\phi'}\oplus B_k \oplus\cdots\oplus B_k$,
where $\phi'\in S_{2k}$ such that $c(\phi')=(k:k)(3k-2:k)$, and where $B_{k}$ denotes the $k \times k$ backward identity matrix.
However, we showed in Example \ref{ex:333777ex} that no such permutation $\phi'$ exists.
Therefore $s\notin\mathfrak{C}(k^n)$, and hence our assumption is false.
So $\mathfrak{C}(k^n)$ is a proper subset of  $\mathfrak{S}^2(k^n)$.
\end{proof}

It would be ideal to have an understanding of Proposition \ref{prop:RtokR} on the level of tournaments.
That is, we would like to illustrate the bijection given in the proof of Proposition \ref{prop:RtokR} by finding a correspondence between $2k$-tournaments on $Z_n$ and $2$-tournaments on $Z_{kn}$.

Let $n$ be a positive integer, $\lambda$ be a partition of $n$, $T\in \mathcal{T}_{kn}^2$ such that $s(T)\in \mathfrak{S}^2(k\lambda)$.
Then  $S(T) = \cup_kS$ for an integer subset $S=\{s_0,\dots,s_{n-1}\}$ such that $s_0 \leq \cdots \leq s_{n-1}$, and let $s$ be the corresponding nondecreasing sequence associated to $S$.
Then $s$ has shape $\lambda$.
Furthermore, there exists a partition $\{V_i:i\in Z_n\}$ of $Z_{kn}$ such that $|V_i| = k$ and $s_T(a)=s_i$ for each $i\in Z_n$ and $a\in V_i$.
Define $T'$ as the weighted digraph with $V(T') = Z_n$ such that
\begin{equation*}
\omega_{T'}(i,j) = \dfrac1k\sum_{v\in V_i}\sum_{w\in V_j}\omega_T(v,w),
\end{equation*}
with the convention that $\omega_{T'}(i,i)=0$ for all $i\in Z_n$.
Then for all $i,j\in Z_n$,
\begin{align*}
\omega_{T'}(i,j) + \omega_{T'}(j,i)
&= \dfrac1k\sum_{v\in V_i}\sum_{w\in V_j}\omega_T(v,w) + \dfrac1k\sum_{w\in V_j}\sum_{v\in V_i}\omega_T(w,v)
\\&= \dfrac1k\sum_{v\in V_i}\sum_{w\in V_j}\left(\omega_T(v,w)+\omega_T(w,v)\right)
= \dfrac1k(k^2)(2) = 2k.
\end{align*}
We may loosely interpret $T'$ as a $2k$-tournament in the following way.
The vertices of $T'$ correspond to teams of players from $T$ with the same score in $T$, and $\omega_{T'}(i,j)$ is the average number of wins each player in team $i$ has against the players in team $j$.
However, this average may not always yield integral numbers of wins.

Recall that if $s_T(v)=s_T(v')$ for some $v,v'\in Z_{kn}$, then $\omega_T(v,v')=\omega_T(v',v)=1$; that is, the players split their games.
Hence for each $i\in Z_n$ and $v\in V_i$,
\begin{equation*}
\sum_{w\in V_i}\omega_T(v,w) = k-1, \text{ and hence }
\sum_{w\notin V_i}\omega_T(v,w) = s_i-(k-1).
\end{equation*}
We may similarly define the score set of $T'$ as $\{ s_{T'}(i):i\in Z_n\}$, where 
\begin{align*}
s_{T'}(i) 
&= \sum_{\substack{j\in Z_n\\ j\neq i}} \omega_{T'}(i,j)
= \sum_{\substack{j\in Z_n\\ j\neq i}} \dfrac1k\sum_{v\in V_i}\sum_{w\in V_j}\omega_T(v,w)
\\&= \dfrac1k\sum_{v\in V_i}\sum_{w\notin V_i}\omega_T(v,w)
= \dfrac1k\sum_{v\in V_i}(s_i-k+1)
= s_i-k+1.
\end{align*}
Hence $S(T') = \{ s_i-k+1: i\in Z_n\}$, and therefore $s(T')\in \mathfrak{S}^{2k}(\lambda)$.

One might hope that, given $s\in\mathfrak{S}^2(k\lambda)$, there exists a permutation $\phi\in S_{kn}$ such that $c(\phi) = s$ and its corresponding $2$-tournament $T^2(\phi)$ can be used to produce a tournament in $\mathcal{T}_n^{2k}$ with integer scores, but this is not always possible.

\begin{example}
\label{ex:bad02}
Let $s = (2,2,6,6,8,8,12,12)$ and $s'=(1,5,7,11)$; observe that $s\in \mathfrak{C}_{8}(2^4)$ and $s'\in\mathfrak{S}_4^4$.
There exist tournaments $T\in \mathcal{T}_8^2$ and $T'\in\mathcal{T}_4^4$ for which $s(T)=s$ and $s(T')=s'$, and $T$ and $T'$ are related through the previously outlined correspondence; see Figure \ref{fig:t8ex}.
\begin{figure}
\centering
$\begin{array}{|c|c|c|c|c|c|c|c|}\hline
0 & 1 & 0 & 1 & 0 & 0 & 0 & 0\\\hline
1 & 0 & 1 & 0 & 0 & 0 & 0 & 0\\\hline
2 & 1 & 0 & 1 & 0 & 1 & 0 & 1\\\hline
1 & 2 & 1 & 0 & 1 & 0 & 1 & 0\\\hline
2 & 2 & 2 & 1 & 0 & 1 & 0 & 0\\\hline
2 & 2 & 1 & 2 & 1 & 0 & 0 & 0\\\hline
2 & 2 & 2 & 1 & 2 & 2 & 0 & 1\\\hline
2 & 2 & 1 & 2 & 2 & 2 & 1 & 0\\\hline
\end{array}
\qquad
\begin{array}{|c|c|c|c|}\hline
0 & 1 & 0 & 0\\\hline
3 & 0 & 1 & 1\\\hline
4 & 3 & 0 & 0\\\hline
4 & 3 & 4 & 0\\\hline
\end{array}$
\caption{Adjacency matrices for tournaments with score sequences in $\mathfrak{S}^2(2^4)$ and $\mathfrak{S}^4(1^4)$, respectively, which follow the correspondence outlined in Example \ref{ex:bad02}.}
\label{fig:t8ex}
\end{figure}
However, we claim that $T$ does not have a TTD, and in fact,
for any $T\in\mathcal{T}_8^2$ with a TTD and $s(T)=s$, its corresponding digraph $T'$ with outdegree vector $s'$ is not a tournament.

Suppose to the contrary; 
assume there exists $T\in \mathcal{T}_8^2$ such that $T$ has a TTD, $s(T)=s$, and the reduction $T'\in \mathcal{T}_4^4$.
Then by Lemma \ref{lem:ttop}, there exists $\phi\in S_8$ such that $T^2(\phi)=T$.
There are four permutations $\phi$ for which $c(\phi)=s$, namely $\phi\in \{21634705, 25034761, 61034725, 27034165\}$.
However, in each case, the resulting digraph $T'$ on $Z_4$ with outdegrees $\{1,5,7,11\}$ does not have integral edge weights; see Figure \ref{fig:bad02} for the adjacency matrices for each.
\begin{figure}
\centering
\begin{tabular}{cc}
$\begin{array}{|>{\rule[-6pt]{0pt}{17pt}}l|l|l|l|}
\hline
0 & 1 & 0 & 0 \\\hline
3 & 0 & \frac32 & \frac12 \\\hline
4 & \frac52 & 0 & \frac12 \\\hline
4 & \frac72 & \frac72 & 0 \\\hline
\end{array}$
&
$\begin{array}{|>{\rule[-6pt]{0pt}{17pt}}l|l|l|l|}
\hline
0 & \frac12 & \frac12 & 0 \\\hline
\frac72 & 0 & \frac32 & 0 \\\hline
\frac72 & \frac52 & 0 & 1 \\\hline
4 & 4 & 3 & 0\\\hline
\end{array}$ 
\\\\[-2ex]
(a) & (b)
\end{tabular}
\caption{The adjacency matrices for the digraph $T'$ when (a) $\phi=21634705$ or $61034725$, and when (b) $\phi=25034761$ or $27034165$, as outlined in Example \ref{ex:bad02}.}
\label{fig:bad02}
\end{figure}
This suggests that Conjecture \ref{conj:D4Conj} may not have a proof built on finding an appropriate injection from a subset of $S_{2n}$ into $\mathcal{T}_n^4$.
\end{example}

For a final note on TTDs, we say that a $k$-tournament $T\in \mathcal{T}_n^k$ is \defn{stably transitive} if there exist transitive tournaments $T_{1}, \ldots, T_{m}$ so that $T + T_{1} + \cdots + T_{m}$ has a TTD. 
(This terminology is borrowed from the notion of a stably free module.)
We have the following.

\begin{theorem} 
\label{thm:stable}
Let $n$ and $k$ be positive integers. 
Then every $k$-tournament is stably transitive.
\end{theorem}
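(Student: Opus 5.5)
We show that any $T\in\mathcal{T}_n^k$, after adding suitably many transitive tournaments, becomes a sum of transitive tournaments. The plan is to reduce to a single tournament and then handle that case by an edge-counting trick.

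First, write $T=T_1+\cdots+T_k$ as a tournament decomposition, which exists for every $k$-tournament. It then suffices to prove the statement for a single tournament $W\in\mathcal{T}_n$: if for each $W=T_j$ we find transitive $R_{j,1},\dots,R_{j,m_j}$ with $W+R_{j,1}+\cdots+R_{j,m_j}$ having a TTD, then adding all the $R_{j,\ell}$ to $T$ gives a sum of $k$ such TTDs. Since a sum of transitive tournaments is again a sum of transitive tournaments, this sum has a TTD.

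For a single tournament $W$, the key observation is that $W+\overline{W}=D_n$, the $1$-tournament on every pair, so its adjacency matrix is $J_n-I_n$. This matrix also equals $A(\epsilon)+A(\epsilon')$, where $\epsilon'$ is the reversal $i\mapsto n-1-i$. So I would use an inversion trick. Let $E$ be the set of edges of $W$ pointing from the smaller label to the larger label, i.e.\ $(x,y)$ with $x<y$. For each such edge, consider the transitive tournament $R_{xy}$ obtained from $T(\epsilon)$ by reversing the single pair $\{x,y\}$, made consistent with transitivity. A single reversal generally breaks transitivity, so instead I would take $R_{xy}$ to be the transitive tournament whose order is $\epsilon$ with $x,y$ swapped, and compare matrices.

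A cleaner route is to aim for an identity of the form
\[
W + \sum_{\text{some transitive }R} \;=\; \sum_{\text{transitive }R'},
\]
by working in the free abelian group on tournament edge-indicators. Every antisymmetric change is generated by differences $A(\sigma)-A(\tau)$ of transitive tournaments. The steps are as follows.

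(i) Show that for every pair $x\neq y$, the matrix $E_{xy}-E_{yx}$ is an integer combination of differences $A(\sigma)-A(\tau)$. Concretely, take $\sigma,\tau$ orderings that differ by swapping the adjacent elements $x,y$ (adjacent in the order). Their difference is exactly $E_{xy}-E_{yx}$.

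(ii) Since $A_W-A(\epsilon)$ is a sum of terms $E_{xy}-E_{yx}$, one per pair where $W$ disagrees with $T(\epsilon)$, we get
\[
A_W + \sum_p A(\tau_p) \;=\; A(\epsilon)+\sum_p A(\sigma_p).
\]
(iii) Read the left side as $W$ plus the transitive tournaments $T(\tau_p)$, and the right side as a TTD. This proves the theorem for $W$, hence for $T$.

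The main obstacle is step (ii). The terms $E_{xy}-E_{yx}$ all appear with coefficient $+1$ in the direction of $W$'s edges, so no negative transitive terms are needed and the identity is a genuine sum of tournaments. I need to verify that the count of tournaments on each side is consistent: both sides must be $(m+1)$-tournaments. This holds since each side has exactly one more tournament than the number of swaps. I will check this bookkeeping carefully, making sure the adjacent-swap orderings exist for every pair, which they do by placing $x,y$ consecutively in some ordering.
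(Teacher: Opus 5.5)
Your steps (i)--(iii) are correct and are essentially the paper's proof. The paper likewise writes each unit of weight $\omega_T(x,y)$ with $x<y$ as $A(\phi_i)-A(\sigma\phi_i)$, a difference of two transitive tournaments whose orders differ by swapping two adjacent elements (it uses the bottom two ranks: $\phi_i(x_i)=1$, $\phi_i(y_i)=0$, $\sigma=(01)$), and concludes $T+\sum_i T(\sigma\phi_i)=kT(\epsilon)+\sum_i T(\phi_i)$; the only difference is that you first split $T$ into $k$ tournaments, whereas the paper handles all layers at once by listing each pair $(x,y)$, $x<y$, with multiplicity $\omega_T(x,y)$, and both routes add the same number $\sum_{x<y}\omega_T(x,y)$ of transitive tournaments.
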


\begin{proof}

Let $T\in \mathcal{T}_n^k$, and let  
$S$ be the (multi)set of ordered pairs $(x,y)$ for which $0\leq x < y \leq n-1$, and $(x,y)$ has multiplicity $\omega_T(x,y)$ in $S$.
Let $m = |S|$, and label the pairs in $S$ so that $S = \{(x_i,y_i):i\in Z_m\}$.

Then, let $\sigma$ be the transposition $(01) \in S_n$, and for $i \in Z_{m}$, let $\phi_{i}$ be a permutation in $S_n$ with $\phi_i(x_i) = 1$ and $\phi_i(y_i) = 0$. 
We see that $\omega_{T(\phi_i)}(x_i,y_i) = 1$ and $\omega_{T(\sigma \phi_i)}(x_i,y_i) = 0$, and the weights of all other edges in $T(\phi_i)$ and $T(\sigma \phi_i)$ are identical. 
Let $T' = T(\phi_0) + \cdots + T(\phi_{m-1})$ and $T'' = T(\sigma\phi_0) + \cdots + T(\sigma\phi_{m-1})$.
Therefore, if $0\leq x < y \leq n-1$, we have that $\omega_{T}(x,y) = \omega_{T'}(x,y) - \omega_{T''}(x,y)$, and furthermore, $\omega_{kT(\epsilon)}(x,y) = 0$.
It follows that 
\begin{align*} 
\omega_{T+T''}(x,y)  
& =  \omega_{T}(x,y) + \omega_{T''}(x,y)
 = \omega_{T'}(x,y)
\\& = \omega_{kT(\epsilon)}(x,y) + \omega_{T'}(x,y)
 = \omega_{kT(\epsilon) + T'}(x,y) 
 \end{align*}
Thus, $T + T'' = kT(\epsilon) + T'$.
Since $kT(\epsilon) + T'$ has a TTD, $T$ is stably transitive.
\end{proof}




Furthermore, given any two tournaments $T,U\in \mathcal{T}_n^k$, there exist tournaments $\widetilde{T}$ and $\widetilde{U}$ with TTDs such that $T + \widetilde{T} = U + \widetilde{U}$.
In fact, if $T',U',T'',U''$ are tournaments with TTDs such that $T + T'' = kT(\epsilon) + T'$ and $U + U'' = kT(\epsilon) + U'$, then we let $\widetilde{T} = U' + T''$ and $\widetilde{U} = T' + U''$.


Given that all tournaments are stably transitive, we have the following open question.

\begin{question}
Let $n$ and $k$ be positive integers and let $T\in \mathcal{T}_n^k$.
What is the smallest integer $m(T)$ such that there exists $T'\in \mathcal{T}_n^m$ such that $T'$ and $T+T'$ have TTDs?
\end{question}

If $m(T)=0$ for a tournament $T$, then $T$ has a TTD, however as illustrated in Examples \ref{ex:333777ex}, there exist some tournaments $T$ for which $m(T)$ is at least $1$.
%
We finish the section with an additional open question.
\begin{question}
Let $n$ and $k$ be positive integers.
Define $m(n,k) = \max\{ m(T): T\in \mathcal{T}_n^k\}$.
What is $m(n,k)$, and how does $m(n,k)$ grow with $n$ or $k$?
\end{question}

It can be shown exhaustively that $m(n,1)=1$ if $1 \leq n \leq 5$, and it follows from Example \ref{ex:333777ex} that $m(n,1)>1$ if $n\geq 6$.
For positive integers $n$ and $k$, a trivial upper bound arising from Theorem \ref{thm:stable} is that $m(n,k) \leq \frac k2\binom n2$.

\section{Restricted score sequences and xrays}
\label{sec:restricted}

As highlighted in Example \ref{ex:333777ex}, we have that $\mathfrak{C}_n \subseteq \mathfrak{S}_n^2$, and $\mathfrak{C}_n \neq \mathfrak{S}_n^2$ if and only if $n\geq 6$.
However, we can find some cases of equality when restricted to score sequences of particular shapes.
In this section, we provide evidence for a conjecture which classifies all integers $n$ and partitions $\lambda$ of $n$ for which $\mathfrak{C}(\lambda) = \mathfrak{S}^2(\lambda)$.
To that end, we say that a partition  $\lambda$ is \emph{incomplete} if $\mathfrak{C}(\lambda) \neq \mathfrak{S}^2(\lambda)$, and hence we seek to determine all complete partitions.
We begin with an inheritance property, as well as a property of characteristic sets of permutations.

\begin{lemma}
\label{lem:rprime}
Let $\lambda$ be a partition of a positive integer $n$ and let $\lambda'\subseteq \lambda$ be a partition of an integer $n'\leq n$.
If $\lambda$ is complete, then $\lambda'$ is complete.
\end{lemma}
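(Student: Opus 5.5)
We argue by contraposition. Assume $\lambda'$ is incomplete, so there is a sequence $s'\in\mathfrak{S}^2(\lambda')\setminus\mathfrak{C}(\lambda')$ of length $n'$. We will extend $s'$ to a sequence $s\in\mathfrak{S}^2(\lambda)$ that cannot lie in $\mathfrak{C}(\lambda)$; this shows $\lambda$ is incomplete. The extension is modeled on the proof of Corollary~\ref{cor:0k}. Write $\lambda\setminus\lambda'=\{q_1,\dots,q_r\}$ as a multiset of parts. We append $r$ blocks, the $j$th block consisting of $q_j$ equal terms. Each block is the characteristic set of a backward identity matrix $B_{q_j}$ placed diagonally after the blocks already built. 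Concretely, if the first $m$ indices are used so far, the new block occupies indices $m,\dots,m+q_j-1$ and contributes the term $2m+q_j-1$ with multiplicity $q_j$.

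The first step is to check that $s\in\mathfrak{S}^2_n$. At each block boundary $t=n'+q_1+\cdots+q_j$, the partial sum equals exactly $2\binom t2$. This holds because the previous boundary is tight and the block adds $q_j(2m+q_j-1)=2\binom{m+q_j}{2}-2\binom m2$. Inside a block, the partial sum exceeds the bound, since each added term $2m+q_j-1$ is at least $2(m+i)$ for $i<q_j$ and the bound increments by $2(m+i)$. Monotonicity needs a separate argument. By Landau-type reasoning, the last term of $s'$ is at most $2(n'-1)$, because tightness at $t=n'-1$ and $t=n'$ forces $s'_{n'-1}\le 2(n'-1)$. The first new term $2n'+q_1-1$ exceeds this, so the sequence is nondecreasing and Theorem~\ref{thm:klandau} applies.

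The next step is to check the shape. The new block values are strictly increasing across blocks and larger than every value of $s'$, so they never merge with earlier terms. The shape of $s$ is therefore $\lambda'\cup\{q_1,\dots,q_r\}=\lambda$, and $s\in\mathfrak{S}^2(\lambda)$.

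Finally, suppose $s=c(\phi)$ for some $\phi\in S_n$. Equality in \eqref{eq:xraycondition} holds at $t=n'$, so Lemma~\ref{lem:PermReduc} gives $\phi=\phi_1\phi_2$ with $\phi_1\in S_{n'}$. The first $n'$ terms of $c(\phi)$ come from $\phi_1$, because every entry of $P_{\phi_2}$ lies on an antidiagonal of index at least $2n'$, which exceeds all terms of $s'$. Hence $c(\phi_1)=s'$, so $s'\in\mathfrak{C}(\lambda')$, a contradiction. The main point needing care is exactly this separation: values coming from $\phi_2$ must be at least $2n'$ while $s'$ has maximum at most $2(n'-1)$, so the multisets split cleanly. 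This is the same bound used for monotonicity above.
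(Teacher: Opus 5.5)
Your proof follows the paper's. The paper appends one block $(2n'+p-1:p)$ to $s'$, applies Lemma~\ref{lem:PermReduc} to split $P_\phi=P_{\phi'}\oplus B_p$, reads off $c(\phi')=s'$, and handles $\lambda\setminus\lambda'$ one part at a time. Appending all blocks at once, arguing by contraposition, and splitting only at $t=n'$ is the same argument, and your boundary-tightness, monotonicity, shape, and separation steps are all correct.

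One intermediate claim is false, though its conclusion is true. Inside a block you assert that each added term $2m+q_j-1$ is at least the increment $2(m+i)$ of the bound for $i<q_j$. This fails once $i>(q_j-1)/2$. For example, with $s'=(0)$ and $q_1=2$ the block value is $3$, while the second increment is $2(1+1)=4$. The inequality holds only cumulatively. Since $2\binom{m+i}{2}-2\binom m2=2mi+i(i-1)$, the surplus after $i$ terms of the block, starting from the tight boundary at $m$, is
\begin{equation*}
i(2m+q_j-1)-\bigl(2mi+i(i-1)\bigr)=i(q_j-i)\geq 0 .
\end{equation*}
The first half of the block builds a surplus that the second half uses up exactly, returning to equality at the next boundary. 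Replace your termwise comparison with this computation.

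A minor wording point: the bound $s'_{n'-1}\le 2(n'-1)$ comes from the inequality at $t=n'-1$ combined with equality at $t=n'$. It does not require tightness at $t=n'-1$, which generally fails.
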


\begin{proof}
The result holds if $\lambda = \lambda'$, and thus it is sufficient to show the result holds when $\lambda \backslash \lambda'$ has one element; let $\lambda' \cup \{p\} = \lambda$.
Suppose that $\lambda$ is complete.
Let $s\in\mathfrak{S}^2(\lambda')$.
A direct computation shows that $s(2n'+p-1:p)\in \mathfrak{S}^2(\lambda)$, so there exists $\phi\in S_n$ such that $c(\phi)=s$.
It follows from Lemma \ref{lem:PermReduc} that $P_\phi = P_{\phi'}\oplus B_p$ for some permutation $\phi'\in S_{n'}$, and necessarily $c(\phi') = s$.
So $s\in\mathfrak{C}(\lambda')$, and hence $\lambda'$ is complete.
\end{proof}

\begin{lemma}
\label{lem:handyresult'}
Let $n$ be a positive integer, $\phi\in S_n$, and $d\in Z_n$.
Let $M = \max C(\phi)$, and $\sigma = |\{x\in C(\phi): x \leq d\}|$.
Then $n+\sigma \leq \max\{ M+1, 2d+2 \}$.
\end{lemma}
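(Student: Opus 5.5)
The plan is to split on whether $2d+2$ already dominates. Write $A=\{i\in Z_n : i+\phi(i)\le d\}$, so $|A|=\sigma$ (this is how $\sigma$ counts elements of the multiset $C(\phi)$). If $n+\sigma\le 2d+2$ there is nothing to prove. So I would assume
\[
\sigma \ge 2d+3-n
\]
and aim to show $M\ge n+\sigma-1$. For that it suffices to find a single $i$ with $i+\phi(i)\ge n+\sigma-1$.

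First I would record two basic facts about $A$. Every $i\in A$ satisfies $i\le d$ and $\phi(i)\le d$; since the elements of $A$ are distinct indices in $\{0,\dots,d\}$, this gives $\sigma\le d+1$. Second, $\phi$ maps $Z_n\setminus A$ bijectively onto $Z_n\setminus\phi(A)$. In other words, the $n-\sigma$ rows of $P_\phi$ outside $A$ carry their $1$s exactly in the $n-\sigma$ columns outside $\phi(A)$.

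Next comes a pigeonhole argument on a lower-right block of $P_\phi$. Set $a=d+1$ and $b=n+\sigma-d-2$. The case assumption gives $b\ge d+1$, and $\sigma\le d+1$ gives $b\le n-1$. Thus the rows $[a,n-1]$ and the columns $[b,n-1]$ are nonempty, and they avoid $A$ and $\phi(A)$ respectively, since those lie in $[0,d]$. So we have $n-d-1$ rows outside $A$ and $n-b=2d+2-\sigma$ columns outside $\phi(A)$, all inside a universe of only $n-\sigma$ rows and $n-\sigma$ columns matched by $\phi$. Since
\[
(n-d-1)+(2d+2-\sigma)=n+d+1-\sigma>n-\sigma,
\]
the image under $\phi$ of the chosen rows must meet the chosen columns. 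Hence some $i\ge d+1$ has $\phi(i)\ge b$, which gives $i+\phi(i)\ge a+b=n+\sigma-1$ and therefore $M+1\ge n+\sigma$.

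The only delicate step is choosing $a$ and $b$. They must sum to exactly $n+\sigma-1$, and both must exceed $d$ so that $A$ cannot interfere with the count. Choosing $a=d+1$ and checking that $b$ lands in $[d+1,n-1]$ under the case assumption is exactly what makes the argument work. The remaining verifications are routine inequalities.
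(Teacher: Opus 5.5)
Your argument is correct, but one count is wrong. The column block $[b,n-1]$ has $n-b=d+2-\sigma$ elements, not $2d+2-\sigma$. So the pigeonhole total is $(n-d-1)+(d+2-\sigma)=n+1-\sigma$, not $n+d+1-\sigma$. This still exceeds $n-\sigma$, so the intersection is forced and the conclusion stands. However, the margin is exactly one, not $d+1$. Indeed, for any $a,b\ge d+1$ with $a+b=n+\sigma-1$, the total is $2n-a-b=n-\sigma+1$. So this argument cannot push $a+b$ any higher, and the slack your display suggests is not there.

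The paper reaches the same bound without a case split.
\begin{itemize}
\item It takes $X=\{d+1,\dots,n-1\}$ and $Y=\phi^{-1}(X)$.
\item Both sets avoid your set $A$, so $|X\cup Y|\le n-\sigma$.
\item Every $x\in X\cap Y$ satisfies $d+1\le x\le M-\phi(x)\le M-d-1$, so $|X\cap Y|\le\max\{0,M-2d-1\}$.
\item Inclusion--exclusion then gives $n+\sigma\le 2n-|X|-|Y|+|X\cap Y|\le\max\{2d+2,M+1\}$ in one line.
\end{itemize}

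The underlying fact is the same in both proofs: rows and columns beyond $d$ lie in the $(n-\sigma)$-element complements of $A$ and $\phi(A)$, on which $\phi$ is a bijection. The difference is in the bookkeeping. The paper uses the threshold $d+1$ on both sides, gets a lower bound on $|X\cap Y|$, and converts it into a lower bound on $M$ through the range of the row indices. You instead shift the column threshold to $n+\sigma-d-2$, so that pigeonhole forces a single entry lying on an antidiagonal of index at least $n+\sigma-1$. The paper's version yields the maximum uniformly with no case analysis. Yours isolates one witnessing entry directly. Both are equally elementary.
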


\begin{proof}
Let $X = Z_n-Z_{d+1}$ and $Y = \{ x\in Z_n: \phi(x)\in X\}$.
Then $|X| = |Y| = n - 1 - d$, and $x+\phi(x) > d$ for each $x\in X\cup Y$.
So $|X\cup Y| \leq n-\sigma$, and therefore $n + \sigma \leq 2n - |X\cup Y|$.
Observe that, if $M \leq 2d+1$, then $X\cap Y$ is empty; otherwise $|X\cap Y| \leq M - (2d+1)$.
Hence, $|X\cap Y| \leq \max\{ 0, M - (2d+1) \}$.
So
\begin{align*}
n + \sigma 
&\leq 2n - |X\cup Y|
= 2n - |X| - |Y| + |X\cap Y| \\
&\leq 2n - 2n+2+2d + \max\{ 0, M-(2d+1)\}\\
&= \max\{ 2d+2, M+1 \}.\qedhere
\end{align*}
\end{proof}

The contrapositive of Lemma \ref{lem:handyresult'} is useful to determine the incompleteness of partitions.
To that end, suppose $\lambda$ is a partition of a positive integer $n$ and $s\in\mathfrak{S}^2(\lambda)$, and let $M$ be the maximum term in $s$.
If there exists $d\in Z_n$ such that
$n+|\{i:s_i\leq d\}| > \max\{ 2d+2, M+1\}$, we say $s$ is \emph{deficient}.
We may restate Lemma \ref{lem:handyresult'} in terms of deficient sequences.

\begin{lemma}
\label{lem:handyresult}
Let $\lambda$ be a partition of a positive integer $n$.
If $\mathfrak{S}^2(\lambda)$ contains a deficient sequence, then $\lambda$ is incomplete.
\end{lemma}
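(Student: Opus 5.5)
The plan is to argue by contrapositive, reducing the statement directly to Lemma \ref{lem:handyresult'}. Suppose $s\in\mathfrak{S}^2(\lambda)$ is deficient. Fix $d\in Z_n$ such that
\begin{equation*}
n+|\{i:s_i\leq d\}| > \max\{2d+2,\,M+1\},
\end{equation*}
where $M$ is the maximum term of $s$. We aim to show $s\notin\mathfrak{C}(\lambda)$; since $s\in\mathfrak{S}^2(\lambda)$, this gives $\mathfrak{C}(\lambda)\neq\mathfrak{S}^2(\lambda)$, so $\lambda$ is incomplete.

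Assume for contradiction that $s\in\mathfrak{C}(\lambda)$. Then there is a permutation $\phi\in S_n$ with $c(\phi)=s$, and hence $C(\phi)$ is exactly the multiset of terms of $s$. Two quantities in Lemma \ref{lem:handyresult'} must be matched to $s$. First, $\max C(\phi)$ is the largest term of $s$, namely $M$. Second, $\sigma=|\{x\in C(\phi):x\leq d\}|$ is counted with multiplicity, which is how the proof of Lemma \ref{lem:handyresult'} counts it (it counts indices $x\in Z_n$ with $x+\phi(x)\leq d$). So $\sigma=|\{i:s_i\leq d\}|$.

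Applying Lemma \ref{lem:handyresult'} with this $\phi$ and $d$ gives
\begin{equation*}
n+\sigma\leq\max\{M+1,\,2d+2\},
\end{equation*}
which contradicts the deficiency inequality. So no such $\phi$ exists, and the claim follows.

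There is no genuine obstacle here; the result is a restatement of the earlier lemma. The only point needing care is the multiset bookkeeping: $\sigma$ must be interpreted with multiplicity, and the index $d$ in the definition of deficiency must range over the same set $Z_n$ as in Lemma \ref{lem:handyresult'}. Both hold by the definitions as written.
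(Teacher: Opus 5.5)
Your proposal is correct and matches the paper's approach: the paper presents this lemma as the contrapositive restatement of Lemma \ref{lem:handyresult'}. Your argument does exactly that, supposing $c(\phi)=s$ and applying that lemma with $M=\max C(\phi)$ and $\sigma$ counted with multiplicity, which is the right bookkeeping.
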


See Table \ref{tab:counterex} for deficient sequences for various partitions.
\begin{table}
\centering
$\begin{array}{|>{\rule[-5pt]{0pt}{16pt}}c|c|c|c|c|c|c|c|}
\hline
n
& \lambda
& s\in\mathfrak{S}^2(\lambda)
& d
& \sigma
& n+\sigma
& 2d + 2
& M + 1 \\\hline
7 & \{1,2,4\} & (4\dv4)(8\dv1)(9\dv2) & 4 & 4 & 11 & 10 & 10 \\\hline
9 & \{1,2,6\} & (6\dv6)(10\dv1)(13\dv2) & 6 & 6 & 15 & 14 & 14 \\\hline
9 & \{2,3,4\} & (5\dv4)(10\dv3)(11\dv2) & 5 & 4 & 13 & 12 & 12 \\\hline
13 & \{2^4,5\} & (6\dv2)(7\dv2)(8\dv2)(16\dv5)(17\dv2) & 8 & 6 & 19 & 18 & 18 \\\hline
\end{array}$
\caption{Examples of sequences in $\mathfrak{S}^2(\lambda)$ which are not in $\mathfrak{C}(\lambda)$, hence demonstrating that $\lambda$ is not complete.}
\label{tab:counterex}
\end{table}
A majority of the results which follow demonstrate the incompleteness of families of partitions by showing the existence of deficient sequences.

\subsection{Partitions with at most two part sizes}

We first observe that the sequence $(k:k)(3k-2:k)$, as given in in Example \ref{ex:333777ex}, is deficient with $d=k$, which is a fact that, when coupled with Lemma \ref{lem:rprime}, classifies several incomplete partitions with only one part size.
In fact, the proof of Corollary \ref{cor:0k} follows this reasoning as well.

\begin{proposition}
\label{prop:k}
Let $k$ and $n$ be positive integers.
Then $\{k\}$ is complete, and if $k\geq 3$, then $k^n$ is incomplete if and only if $n\geq 2$.
\end{proposition}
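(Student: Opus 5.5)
We prove the two assertions separately. For completeness of the one-part partition $\{k\}$, we must show $\mathfrak{S}^2(\{k\})\subseteq\mathfrak{C}(\{k\})$. A sequence with shape $\{k\}$ is constant, say $(a:k)$. The equality case $t=k$ in \eqref{eq:klandaucondition} gives $ka=2\binom k2=k(k-1)$, so $a=k-1$ and $\mathfrak{S}^2(\{k\})=\{(k-1:k)\}$. The reversal permutation $\phi(i)=k-1-i$, whose matrix is the backward identity $B_k$, satisfies $i+\phi(i)=k-1$ for every $i\in Z_k$. Hence $c(\phi)=(k-1:k)$ and $\{k\}$ is complete.

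Now let $k\ge 3$. If $n=1$, then $k^n=\{k\}$, which was just shown complete. For $n\ge 2$, Corollary \ref{cor:0k} already gives $\mathfrak{C}(k^n)\subsetneq\mathfrak{S}^2(k^n)$, so $k^n$ is incomplete. There is also a more self-contained route that fits this section.

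\begin{itemize}
\item First show that $k^2$ is incomplete. Take $s=(k:k)(3k-2:k)$ and verify that it lies in $\mathfrak{S}^2(k^2)$. The partial sums for $t\le k$ are $kt\ge t(t-1)$, which holds since $t\le k$. For $t=k+j$ the sum is $k^2+j(3k-2)$, and we need this to be at least $(k+j)(k+j-1)$. The difference is $j(k-j-1)+k \ge 0$ for $0\le j\le k$, with equality at $j=k$.
\item Next check that $s$ is deficient with $d=k$. Here $\sigma=k$ and $n'=2k$, so $n'+\sigma=3k$, while $\max\{2k+2,\,3k-1\}=3k-1$ because $k\ge 3$. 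Since $3k>3k-1$, the sequence is deficient, and Lemma \ref{lem:handyresult} shows $k^2$ is incomplete. This is the same conclusion as Example \ref{ex:333777ex}.
\item Finally, for $n\ge 2$ we have $k^2\subseteq k^n$. The contrapositive of Lemma \ref{lem:rprime} then shows $k^n$ is incomplete.
\end{itemize}

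The only step needing care is checking the partial-sum inequalities for $s$. Everything else is a direct citation of earlier results, so the proof should be short.
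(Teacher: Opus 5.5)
Your proposal is correct and follows essentially the paper's argument: the paper gives no separate proof, but its preceding remark uses the deficient sequence $(k:k)(3k-2:k)$ with $d=k$ together with Lemma~\ref{lem:rprime}, as you do (and as in Corollary~\ref{cor:0k}). The completeness of $\{k\}$ via $B_k$ is the evident argument the paper leaves implicit. Your partial-sum and deficiency checks are accurate.
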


The next four propositions identify a series of incomplete partitions which have exactly 2 distinct part sizes.




\begin{proposition}
\label{prop:knk}
Let $a$ and $b$ be distinct, positive integers such that $a<b$.
Then $\{a,b\}$ is complete if and only if $\gcd(a,b)\leq 2$.
\end{proposition}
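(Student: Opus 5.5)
The plan is to prove the two directions separately, starting from a description of $\mathfrak{S}^2(\{a,b\})$. Here $n=a+b$, and every $s\in\mathfrak{S}^2(\{a,b\})$ has the form $(x:p)(y:q)$ with $\{p,q\}=\{a,b\}$ and $x<y$. By Theorem~\ref{thm:klandau}, the only constraints are $px+qy=n(n-1)$ and $x\ge p-1$. The remaining prefix inequalities for $t>p$ should follow from the equality at $t=n$, because the partial sums are piecewise linear in $t$. If $x=p-1$, then $s$ is realized by $B_p\oplus B_q$, consistent with Lemma~\ref{lem:PermReduc}, so we may assume $x\ge p$.

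\emph{Incompleteness when $g=\gcd(a,b)\ge 3$.} By Lemma~\ref{lem:handyresult} it suffices to exhibit a deficient sequence, generalizing $(k:k)(3k-2:k)$ from Example~\ref{ex:333777ex}, which is the case $a=b$. I would take $p=a$, $q=b$ and test at $d=x$. Then deficiency means $n+a>\max\{2x+2,\,y+1\}$, that is,
\[ x\le \tfrac{n+a-3}{2} \quad\text{and}\quad y\le n+a-2 . \]
Since $g\mid n(n-1)$, the integer solutions of $ax+by=n(n-1)$ form a progression in which $x$ moves in steps of $b/g$ and $y$ in steps of $a/g$. I would choose the solution with $x$ as small as possible subject to $x\ge a$. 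The hypothesis $g\ge3$ makes the step $b/g\le b/3$ small enough that this $x$ lands in the window $[a,(n+a-3)/2]$. One must then verify $y\le n+a-2$ and $x<y$; if this placement fails in some edge case, I would try the order $p=b$, $q=a$ instead. This is a finite, explicit computation, analogous to the rows of Table~\ref{tab:counterex}.

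\emph{Completeness when $g\le2$.} Here I would construct, for every admissible $(x:p)(y:q)$ with $x\ge p$, a permutation $\phi$ with $i+\phi(i)\in\{x,y\}$, taking the value $x$ exactly $p$ times. Consider the bipartite graph between rows and columns in $Z_n$ that joins row $i$ to columns $x-i$ and $y-i$ when these lie in $Z_n$. Each vertex has degree at most $2$, so the graph is a disjoint union of paths and cycles, and the two reflections $i\mapsto x-i$ and $i\mapsto y-i$ compose to translation by $y-x$. Consequently each component is a zigzag running along residues modulo $y-x$. A permutation of the required type is exactly a perfect matching of this graph that uses precisely $p$ edges of ``type $x$''. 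Each path component must have an even number of vertices, and its matching is then forced; each cycle admits two matchings, which differ in how many type-$x$ edges they use.

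The main obstacle is the counting in this last step. One must show that when $g\le2$, every component is an even path or a cycle, and that the free choices on the cycles can be made to hit exactly $p$ type-$x$ edges. The arithmetic input should be the relation $px+qy=n(n-1)$ reduced modulo $y-x$, where the condition $\gcd(a,b)\le2$ should control the parity of the path lengths. I would first work out the structure of the components for small cases, namely $\{1,b\}$ and $\{2,b\}$ with $b$ odd. I would then try to reduce the general case by induction, peeling off a reducible block via Lemma~\ref{lem:PermReduc}.
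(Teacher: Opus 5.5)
Your incompleteness half is sound and is in fact the paper's argument. The smallest $x\ge a$ in your progression is $x=a-1+b/g$, which gives exactly the paper's sequence $(a-1+b/g:a)(2a+b-1-a/g:b)$ with $d=x$. Because $a<b$ forces $b\ge 2g$, the conditions $2x+2\le n+a-1$, $y\le n+a-2$ and $x<y$ all hold for every $g\ge 3$, so no fallback to the other ordering is needed.

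The genuine gap is the completeness half, which you do not prove. You sketch a framework, and the perfect-matching reformulation is valid, but you explicitly leave the decisive step open. Three claims are only things one ``should'' be able to show: that every component is an even path or a cycle, that $\gcd(a,b)\le 2$ controls the parities, and that cycle choices can be tuned to give exactly $p$ type-$x$ edges. The proposed induction via Lemma~\ref{lem:PermReduc} is also not specified.

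The missing idea is that your own progression already settles this direction by enumeration, with no matching analysis. For a fixed ordering $(p,q)$ the solutions are $x=n-1+tq/g$ and $y=n-1-tp/g$. The requirement $x<y$ forces $t<0$, and $x\ge p-1$ forces $t\ge -g$, so there are exactly $g$ admissible sequences per ordering.

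If $g=1$, the only one is $t=-1$, i.e.\ $x=p-1$. You already realize this by $B_p\oplus B_q$, so after your reduction to $x\ge p$ nothing remains.

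If $g=2$, the only extra sequence is $t=-1$, namely $(p-1+q/2:p)(n-1+p/2:q)$. It is realized by $P_\phi=B_2\otimes(B_{p/2}\oplus B_{q/2})$. This matrix has two copies of $B_{p/2}\oplus B_{q/2}$ in its off-diagonal blocks. Hence each term of $c(B_{p/2}\oplus B_{q/2})=(p/2-1:p/2)(p+q/2-1:q/2)$ is shifted by $n/2$ and doubled, which gives the required sequence. This is precisely the paper's proof of completeness.
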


\begin{proof}
Let $g = \gcd(a,b)$, and let $s \in \mathfrak{S}^2(\{a,b\})$.
Then there exist integers $x$ and $y$ such that $xa + yb = (a+b)(a+b-1)$, where $a-1 \leq x \leq a+2b-1$ and $b-1 \leq y \leq 2a+b-1$.
In fact, $x$ and $y$ must be of the form
\begin{equation*}
x = a+b-1+tb/g\text{ and }y = a+b-1 - ta/g,
\end{equation*}
where $t\in\{\pm1,\dots,\pm g\}$.
In particular, if $t=g$ or $t=-g$, then $c(\phi)=s$ if $P_\phi = B_b\oplus B_a$ or $P_\phi = B_a\oplus B_b$, respectively.
If $2$ divides $g$ and $t=g/2$ or $t=-g/2$, then $c(\phi)=s$ if $P_\phi = B_2\otimes(B_{b/2}\oplus B_{a/2})$ or $P_\phi = B_2\otimes(B_{a/2}\oplus B_{b/2})$, respectively.
Hence, $s\in\mathfrak{C}(\{a,b\})$, and therefore $\{a,b\}$ is complete if $g\leq 2$.

Now suppose $g\geq 3$.
Consider the sequence $s$ given by
\begin{equation*}
s = \left(a-1+b/g: a\right)\left(2a+b-1-a/g:b\right),
\end{equation*}
A direct computation shows that $s\in\mathfrak{S}^2(\{a,b\})$.
Let $d=a-1+b/g$; 
then $d\leq a+b-1$ and 
$2a+b > \max\{ 2a-1+2b/g, 2a+b-a/g \}$.
So $s$ is deficient, and therefore $\{a,b\}$ is incomplete by Lemma \ref{lem:handyresult}.
\end{proof}

\begin{proposition}
\label{prop:1k}
Let $k\geq 6$.
Then $\{1^2,k\}$ is complete and $\{1^3,k\}$ is incomplete.
\end{proposition}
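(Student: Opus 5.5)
For the incomplete half, I will exhibit a deficient sequence in $\mathfrak{S}^2(\{1^3,k\})$ and then apply Lemma \ref{lem:handyresult}. Here $n=k+3$. I propose
\begin{equation*}
s=(k\dv k)(k+5\dv1)(2k\dv1)(2k+1\dv1).
\end{equation*}
The three singleton values are distinct exactly when $k+5<2k$, i.e.\ $k\geq 6$, which is where the hypothesis enters. The total is $k^2+5k+6=(k+3)(k+2)=2\binom{n}{2}$. The condition \eqref{eq:klandaucondition} with $k=2$ is checked in three ranges:
\begin{itemize}
\item for $t\leq k$ it holds since $kt\ge t(t-1)$;
\item for $t=k+1$ it reads $k^2+k+5\geq k^2+k$;
\item for $t=k+2$ it reads $k^2+3k+5\ge k^2+3k+2$.
\end{itemize}
So $s\in\mathfrak{S}^2(\{1^3,k\})$. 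Taking $d=k$ gives $\sigma=k$ and $n+\sigma=2k+3$, while $\max\{2d+2,M+1\}=2k+2$. Hence $s$ is deficient and $\{1^3,k\}$ is incomplete.

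For the complete half, let $n=k+2$ and take an arbitrary $s\in\mathfrak{S}^2(\{1^2,k\})$. Such an $s$ consists of a block $(a\dv k)$ together with two distinct singletons $b<c$. There are three possible orders: block first, block in the middle, or block last. In each case I will write down the constraints \eqref{eq:xraycondition} together with the total-sum identity $ka+b+c=(k+2)(k+1)$, which cuts the set of admissible $(a,b,c)$ down to a short list of one-parameter families.

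Whenever one of the inequalities is an equality at some $t<n$, I will split $s$ into a prefix and a suffix (shifted back) and realize each part separately. The prefix and suffix are sequences whose shapes are sub-partitions of $\{1^2,k\}$, namely $\{1\}$, $\{k\}$, $\{1,1\}$ or $\{1,k\}$. These are complete by Proposition \ref{prop:k}, Proposition \ref{prop:knk} (since $\gcd(1,k)=1$), and a trivial check for $\{1,1\}$. I then take the direct sum of the resulting permutation matrices; this is the converse direction of Lemma \ref{lem:PermReduc}, and under it characteristic sets concatenate with the appropriate shift.

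The remaining, irreducible cases form the main obstacle. All inequalities are strict there, so a genuinely non-block-diagonal permutation is required. My first attempt is to start from the backward identity $B_{k+2}$, whose characteristic sequence is $(k+1\dv k+2)$. I then apply one or two transpositions of rows: moving two $1$s off the central antidiagonal shifts exactly two entries of the characteristic set by opposite amounts $\pm j$. This produces the pattern $(k+1\dv k)(k+1-j)(k+1+j)$, which handles the case $a=k+1$. For $a\neq k+1$, I will instead use $B_k$ placed on a shifted antidiagonal, padded so that two extra rows and columns carry the singletons. This uses the freedom that $x$ and $y$ in Lemma \ref{lem:xrayconditions} may be matched in either order.

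Finally, I will verify that these constructions sweep out every irreducible $(a,b,c)$ permitted by the inequalities. I expect this verification to be the fiddly part, possibly requiring a separate treatment of small $k$ near $6$.
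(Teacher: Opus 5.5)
Your incompleteness half is correct, and it uses a simpler witness than the paper's. The sequence $(k\dv k)(k+5\dv1)(2k\dv1)(2k+1\dv1)$ satisfies the Landau conditions and uses $k\ge 6$ only to ensure $k+5<2k$. It is deficient at $d=k$, since $n+\sigma=2k+3>2k+2=\max\{2d+2,M+1\}$. The paper instead takes three singletons near $5k/3$, splits into cases modulo $3$, and also uses $d=k$.

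The completeness half, however, stops exactly where the work is.

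\emph{The enumeration is never carried out.} It is short: the block value $a$ lies in $\{k-1,\dots,k+3\}$. The block comes first for $a\in\{k-1,k\}$, sits in the middle for $a=k+1$, and comes last for $a\in\{k+2,k+3\}$. Your splitting handles $a=k-1$, $a=k+3$ and $(0\dv1)(k+1\dv k)(2k+2\dv1)$. It is legitimate: equality at $t<n$ forces $s_{t-1}\le 2t-2<2t\le s_t$, so the block is never cut. Your transposition of $B_{k+2}$ is exactly the paper's $\sigma_j\gamma$. What remains are the irreducible families
\[ (k\dv k)(b\dv1)(3k+2-b\dv1),\ k+1\le b\le\lfloor (3k+1)/2\rfloor, \qquad (i\dv1)(k+2-i\dv1)(k+2\dv k),\ 1\le i\le \lfloor (k+1)/2\rfloor . \]

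\emph{Your construction for these families fails as described.} Suppose $B_k$ is a contiguous block on antidiagonal $a$. Then its corner is forced: $r_0+c_0=3$ for $a=k+2$ and $r_0+c_0=1$ for $a=k$. There are two ways to fill the two leftover rows and columns, and one of them puts a singleton onto the block's antidiagonal and changes the shape. So you obtain only $(1\dv1)(k+1\dv1)(k+2\dv k)$ and $(k\dv k)(k+1\dv1)(2k+1\dv1)$, one member of each family. For $k\ge 6$ each family has at least three members. The ``freedom to match $x$ and $y$ in either order'' is exactly these two fillings, and it does not help.

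\emph{The missing idea} is to use $k$ of the $k+1$ cells of the relevant antidiagonal, rather than a contiguous $B_k$.
\begin{itemize}
\item For $a=k+2$, take $I_1\oplus B_{k+1}$ and exchange the columns of its entries in rows $0$ and $k+2-i$; this is the paper's $\sigma_i\beta$. It yields characteristic values $i$ and $k+2-i$, while the other $k$ entries stay on antidiagonal $k+2$.
\item For $a=k$, you have two options. First, you can omit the cell in row $r$ of antidiagonal $k$, with $0\le r\le (k-1)/2$, and place entries at $(r,k+1)$ and $(k+1,k-r)$; this gives $b=k+1+r$ and $c=2k+1-r$. Second, you can apply the $180^\circ$ rotation $C(\phi)\mapsto 2n-2-C(\phi)$. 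It preserves both $\mathfrak{C}_n$ and $\mathfrak{S}^2_n$ together with shapes, and it is what lets the paper restrict to modal term at least $k+1$.
\end{itemize}
With these constructions the completeness half closes. No separate small-$k$ analysis is needed, because completeness of $\{1^2,k\}$ does not use $k\ge 6$ at all.
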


\begin{proof}
To show $\{1^2,k\}$ is complete, it is sufficient to show that for each sequence $s\in\mathfrak{S}^2(\{1^2,k\})$ whose modal term is at least $k+1$, there exists $\phi\in S_{k+2}$ such that $c(\phi)=s$.
The sequences in $\mathfrak{S}^2(\{1^2,k\})$ for which the modal term is at least $k+1$ are of the form
\begin{align*}
s^0 &= (0:1)(2:1)(k+3:k), \\
s^1_i &= (i:1)(k+2-i:1)(k+2:k),\text{ or}\\
s^2_j &= (k+1-j:1)(k+1:k)(k+1+j:1),
\end{align*}
for some integer $i$ such that $1\leq i \leq (k+1)/2$, or $j\in \{1,\dots,k+1\}$.
Let $\alpha,\beta,\gamma\in S_{k+2}$ such that $P_\alpha = I_2\oplus B_{k}$, $P_\beta = I_1\oplus B_{k+1}$, and $P_\gamma = B_{k+2}$.
Let $\sigma_i$ be the transposition $(0i)$ for each $i\in Z_{k+2}$.
Then, for each integer $i$ such that $1\leq i \leq (k+1)/2$ and $j\in\{1,\dots,k+1\}$,
$s^0 = c(\alpha)$,
$s^1_i = c(\sigma_i\beta)$, and 
$s^2_j = c(\sigma_j\gamma)$.
Therefore $\{1^2,k\}$ is complete.

Define the set $S$ as 
\begin{equation*}
S = \left\{\begin{array}{ll}
\{(5k+3)/3, (5k+6)/3,  (5k+9)/3\} & \text{if $k\equiv 0\!\!\!\pmod{3}$},\\
\{(5k+1)/3, (5k+7)/3, (5k+10)/3\} & \text{if $k\equiv 1\!\!\!\pmod{3}$, and}\\
\{(5k+2)/3, (5k+5)/3, (5k+11)/3\} & \text{if $k\equiv 2\!\!\!\pmod{3}$}.
\end{array}\right.
\end{equation*}
Let $s$ be the nondecreasing sequence corresponding to the set $\cup_k\{k\}\cup S$; a direct computation shows that $s\in \mathfrak{S}^2(\{1^3,k\})$.
Since $k\geq 6$, it follows that 
$2k+3 > \max\{2k+2,(5k+14)/3\}$, and hence $s$ is deficient, with $d=k$.
Therefore $\{1^3,k\}$ is incomplete.
\end{proof}

\begin{proposition}
\label{prop:2k}
For all $k\geq 3$, $\{2^2,2k\}$ is incomplete.
\end{proposition}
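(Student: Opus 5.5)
The plan is to exhibit, for each $k\geq 3$, an explicit deficient sequence in $\mathfrak{S}^2(\{2^2,2k\})$ and then apply Lemma~\ref{lem:handyresult}. This follows the pattern of Table~\ref{tab:counterex} and of the proofs of Propositions~\ref{prop:knk} and~\ref{prop:1k}.

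Set $n=2k+4$. I will use a sequence with two small values of multiplicity $2$ followed by one large value of multiplicity $2k$,
\begin{equation*}
s=(4\dv 2)(k+2\dv 2)(2k+6\dv 2k).
\end{equation*}
For $k\geq 3$ we have $4<k+2<2k+6$, so the three values are distinct and $s$ has shape $\{2^2,2k\}$.

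The first step is to check that $s\in\mathfrak{S}^2_n$ using Theorem~\ref{thm:klandau} with $k=2$.
\begin{itemize}
\item The total is $8+2(k+2)+2k(2k+6)=4k^2+14k+12=(2k+4)(2k+3)=n(n-1)$, so equality holds at $t=n$.
\item For $t\leq 4$ the partial sums $4,8,k+6,2k+12$ clearly dominate $t(t-1)\in\{0,2,6,12\}$.
\item For $4\leq t\leq n$, consider $f(t)=t(t-1)-\bigl(2k+12+(t-4)(2k+6)\bigr)$. It is convex in $t$, satisfies $f(4)=-2k<0$, and satisfies $f(n)=0$. Hence $f\leq 0$ throughout, which is exactly the required inequality on that range.
\end{itemize}

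The second step is to verify deficiency with $d=k+2\in Z_n$. Here $\sigma=|\{i:s_i\leq d\}|=4$, so $n+\sigma=2k+8$. On the other side, $2d+2=2k+6$ and $M+1=2k+7$. Since $2k+8>\max\{2k+6,2k+7\}$, the sequence $s$ is deficient, and Lemma~\ref{lem:handyresult} shows that $\{2^2,2k\}$ is incomplete.

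The only real obstacle is choosing the sequence. The constraints that drive the choice are:
\begin{itemize}
\item deficiency at $d=b$ with $\sigma=4$ forces $b\leq k+2$ and $M\leq 2k+6$;
\item the total-sum condition then forces $a+b=k+6$ when $M=2k+6$;
\item this makes $a=4$, $b=k+2$ the natural extreme choice.
\end{itemize}
Once the sequence is fixed, the Landau-type verification is routine by the convexity argument above. The hypothesis $k\geq 3$ enters only to keep the values $4$ and $k+2$ distinct, which is consistent with $\{2^2,4\}$ not being covered.
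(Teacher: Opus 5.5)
Your proof is correct and follows the paper's strategy: you exhibit an explicit deficient sequence in $\mathfrak{S}^2(\{2^2,2k\})$ and invoke Lemma~\ref{lem:handyresult}. The only real difference is the witness. The paper puts the $2k$-fold block of value $2k$ at the bottom and the two pairs near $7k/2$ on top, with $d=2k$, which forces a parity split on $k$. You reverse the configuration with $(4\dv 2)(k+2\dv 2)(2k+6\dv 2k)$, $d=k+2$ and $\sigma=4$, so no case split is needed, and your convexity argument makes the Landau check explicit.

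One harmless slip: the third partial sum is $4+4+(k+2)=k+10$, not $k+6$; it still dominates $6$.
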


\begin{proof}
Suppose $k\geq 3$.
Define the set $S$ as
\begin{align*}
S &= 
\left\{ \begin{array}{ll}
\{(7k+4)/2, (7k+8)/2\} & \text{ if $k$ is even,}\\
\{(7k+5)/2, (7k+7)/2\} & \text{ if $k$ is odd.}
\end{array}\right.
\end{align*}
Let $s$ be the nondecreasing sequence corresponding to the set $\cup_{2k}\{2k\}\cup (\cup_2(S))$; a direct computation shows that $s\in \mathfrak{S}^2(\{2^2,2k\})$.
Since $k\geq 3$, we have that $4k+4 > \max\{ 4k+2,(7k+10)/2\}$; 
so $s$ is deficient, with $d=2k$.
Therefore $\{2^2,2k\}$ is incomplete by Lemma \ref{lem:handyresult}.
\end{proof}

\begin{proposition}
\label{prop:22k+1}
The partitions $\{2^3,5\}$ and $\{2^4,5\}$ are complete and incomplete, respectively, and for all $k\geq 3$, $\{2^2,2k+1\}$ and $\{2^3,2k+1\}$ are complete and incomplete, respectively.
\end{proposition}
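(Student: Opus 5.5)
The statement has four parts: two incompleteness claims and two completeness claims. I will treat them separately, following the pattern of Propositions \ref{prop:1k} and \ref{prop:2k}.

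\textbf{Incompleteness.} Table \ref{tab:counterex} already gives a deficient sequence in $\mathfrak{S}^2(\{2^4,5\})$, namely $(6\dv2)(7\dv2)(8\dv2)(16\dv5)(17\dv2)$ with $d=8$. Lemma \ref{lem:handyresult} then shows that $\{2^4,5\}$ is incomplete.

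For $\{2^3,2k+1\}$ with $k\geq 3$, I would build a sequence modeled on that example. The $2k+1$ copies of a middle value sit in the middle block, and the three pairs are split so that two pairs lie below $d$ and one lies above. Concretely, take $n=2k+7$ and let the low block consist of the $2k+1$ copies of a value near $2k$, together with pairs at values slightly above it. Choose $d$ so that $\sigma$ counts the large block plus two pairs, giving $n+\sigma$ roughly $4k+12$. Then tune the remaining pair values so that the Landau-type inequalities \eqref{eq:xraycondition} hold, with total $2\binom{n}{2}$, while keeping $M+1$ and $2d+2$ strictly below $n+\sigma$.

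A cleaner alternative is to place the large block low. Take values $(a\dv 2k+1)$ followed by three pairs, with $d=a$, so that $\sigma=2k+1$ and $n+\sigma=4k+8$. We then need $2a+2\leq 4k+7$ and $M\leq 4k+6$. The inequality at $t=2k+1$ forces $a\geq 2k$, so take $a=2k$ and $d=2k$. This gives $2d+2=4k+2$.

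The three pairs $p_1<p_2<p_3$ must satisfy $2(p_1+p_2+p_3)=(2k+7)(2k+6)-2k(2k+1)=24k+42$, so $p_1+p_2+p_3=12k+21$, with $p_3\leq 4k+6$. The prefix conditions at $t=2k+2,\dots,2k+7$ must also hold. Choosing the $p_i$ near $4k+7$ gives a candidate such as $(4k+5,4k+8,4k+8)$, adjusted for distinctness. I would verify the prefix inequalities directly, in the same way as in Proposition \ref{prop:2k}.

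\textbf{Completeness.} For $\{2^3,5\}$ (with $n=11$) and $\{2^2,2k+1\}$ (with $n=2k+5$), I would follow the proof of Proposition \ref{prop:1k}.

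First, enumerate the sequences in $\mathfrak{S}^2(\lambda)$. If some proper prefix achieves equality in \eqref{eq:xraycondition}, the sequence splits into a concatenation of sequences for smaller partitions. Each piece then comes from a smaller complete partition: $\{2^j\}$, $\{2k+1\}$, $\{2,2k+1\}$ and $\{2^2\}$, all complete by Propositions \ref{prop:k} and \ref{prop:knk}. The direct sum of the corresponding permutation matrices, shifted appropriately, then realizes the whole sequence.

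So only the irreducible sequences remain, where every proper prefix inequality is strict. These form a small parametrized family, which I would describe explicitly in terms of the position of the $(2k+1)$-block and the two (or three) pair values. For each family I would give explicit permutations. The base permutation is $B_{2k+5}$, whose characteristic sequence is constant at $2k+4$. Composing it with a few transpositions, or using a block form such as $B_2\oplus B_{2k+1}\oplus B_2$ with swaps, moves pairs of antidiagonal values symmetrically off the middle value. This is analogous to how $\sigma_j\gamma$ was used in Proposition \ref{prop:1k}.

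For $\{2^3,5\}$ the problem is finite, so an exhaustive check (or an explicit list of permutations) suffices.

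\textbf{Main obstacle.} The hardest step is the completeness of $\{2^2,2k+1\}$ for general $k$. It requires a correct and complete parametrization of the irreducible sequences, and a uniform family of permutations realizing every one of them.

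I would first pin the $(2k+1)$-block near the value $2k+4$, which the sum constraints essentially force. I would then show that the two pairs must be $\{2k+4-u,2k+4+v\}$-type perturbations with few degrees of freedom, and realize each by a product of at most two transpositions applied to $B_{2k+5}$.
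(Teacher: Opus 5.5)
\textbf{Incompleteness of $\{2^3,2k+1\}$.} Your treatment of $\{2^4,5\}$ (the tabulated deficient sequence) and of $\{2^3,5\}$ (exhaustive check) matches the paper. The witness you propose for $\{2^3,2k+1\}$, however, is not deficient, and adjusting the pairs cannot make it so.

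With low block $(2k:2k+1)$, the pair values satisfy $p_1+p_2+p_3=12k+21$. Their average is $4k+7$, so $M\ge 4k+7$ and $M+1\ge 4k+8=n+\sigma$. Your own candidate $(4k+5,4k+8,4k+8)$ already violates the bound $M\le 4k+6$ you had just derived.

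There is also a structural reason this cannot work. Taking $a=2k$ makes the prefix inequality at $t=2k+1$ an equality. By Lemma~\ref{lem:PermReduc}, any such sequence is realized by $B_{2k+1}\oplus P_{\phi_2}$ as soon as the shifted tail is realizable. For instance, $(2k:2k+1)(4k+5:2)(4k+7:2)(4k+9:2)$ is realized with $\phi_2=325410$.

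Your first variant, with $\sigma=2k+5$, fails for a similar reason. Those $2k+5$ terms sum to at least $(2k+5)(2k+4)$ and are not all equal, so $d\ge 2k+5$ and $2d+2\ge 4k+12=n+\sigma$.

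Taking the minimal $a$ is the wrong direction. You want the block value as \emph{large} as $2d+2<4k+8$ allows, namely $a=d=2k+2$. The value $2k+1$ is excluded by parity, since $(2k+7)(2k+6)-(2k+1)^2$ is odd. Then the pairs sum to $10k+20$. Three distinct values near $(10k+20)/3$, as in the paper's set $S$, give $M+1\le(10k+28)/3<4k+8$ for $k\ge 3$, while $2d+2=4k+6$.

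\textbf{Completeness of $\{2^2,2k+1\}$.} Here you give a plan, and its concrete realization step misses a family. Your splitting of reducible sequences is sound: equality at $t$ forces $s_{t-1}\le 2(t-1)<2t\le s_t$, so no block is cut. Note that $\{2^2\}$ is covered by neither Proposition~\ref{prop:k} nor Proposition~\ref{prop:knk}, but $(1,1,5,5)$ and $(2,2,4,4)$ are easily checked by hand.

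Working out the constraints, the irreducible sequences are exactly:
\begin{itemize}
\item $(i:2)(2k+7-i:2)(2k+6:2k+1)$ for $2\le i\le k+3$;
\item $(2k+4-j:2)(2k+4:2k+1)(2k+4+j:2)$ for $1\le j\le 2k+2$;
\item the images of the first family under $s\mapsto 2(n-1)-s$.
\end{itemize}

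For the middle family your idea works. Let $P_\delta=B_{2k+5}$ and $\tau=(0\,j)(1\;j+1)$, or $\tau=(0\,1)(2\,3)$ if $j=1$. Then $x\mapsto\delta(\tau(x))$ has the required characteristic sequence. This even avoids a slip in the paper, whose $\sigma_j\delta\sigma_j$ fixes $0$ when $j=k+2$.

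For the first family your mechanism cannot work. The $2k+1$ block entries equal $2k+6$ and the two entries equal to $i$ are below $2k+4$, so $\phi(x)\neq\delta(x)$ for at least $2k+3\ge 9$ values of $x$. Composing or conjugating $\delta$ with at most two transpositions changes it in at most $8$ places. The base $B_2\oplus B_{2k+1}\oplus B_2$ does not help either, since its modal value is also $2k+4$.

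You need the paper's other base, $P_\gamma=B_2\oplus B_{2k+3}$, which has $c(\gamma)=(1:2)(2k+6:2k+3)$. Conjugate it by $(0\,m)$, with $m=i-1$ for $3\le i\le k+3$ and $m=2k+4$ for $i=2$. The reversed family then follows by rotating $P_\phi$ through $180^\circ$. This is the symmetry the paper uses at the outset to assume the modal term is at least $2k+4$.
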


\begin{proof}
An exhaustive search shows that $|\mathfrak{S}^2(\{2^3,5\})|=|\mathfrak{C}(\{2^3,5\})|=146$, so $\{2^3,5\}$ is complete.
A deficient sequence in $\mathfrak{S}^2(\{2^4,5\})$ is given in Table~\ref{tab:counterex}, and hence $\{2^4,5\}$ is incomplete by Lemma \ref{lem:handyresult}.

Let $k\geq 3$.
To show completeness of $\{2^2,2k+1\}$, it is sufficient to show that for each sequence $s$ of $\mathfrak{S}^2(\{2^2,2k+1\})$ whose modal term is at least $2k+4$, there exists $\phi\in S_{2k+5}$ such that $c(\phi)=s$.
The sequences in $\mathfrak{S}^2(\{2,2k+1\})$ for which the modal term is at least $2k+4$ are of the form
\begin{align*}
s^0_0 &= (1:2)(5:2)(2k+8:2k+1), \\
s^0_1 &= (2:2)(4:2)(2k+8:2k+1), \\
s^1_i &= (i:2)(2k+7-i:2)(2k+6:2k+1),\text{ or}\\
s^2_j &= (2k+4-j:2)(2k+4:2k+1)(2k+4-j:2),
\end{align*}
where $i\in\{2,\dots,k+3\}$, and $j\in\{1,\dots,2k+3\}$.
Let $\alpha,\beta,\gamma,\delta\in S_{2k+5}$ such that $P_\alpha = B_{2}\oplus B_2\oplus B_{2k+1}$, $P_\beta = (B_2\otimes I_2)\oplus B_{2k+1}$, $P_\gamma = B_2\oplus B_{2k+3}$, and $P_\delta = B_{2k+5}$.
Let $\sigma_i$ be the transposition $(0i)$ for each $i\in Z_{2k+5}$.
Then, for each $i\in\{2,\dots,k+2\}$ and $j\in\{1,\dots,2k+3\}$,
$s^0_0 = c(\alpha),\ s^0_1 = c(\beta)$, 
$s^1_i = c(\sigma_i\gamma\sigma_i)$, 
$s^1_{k+3} = c(\sigma_{2k+4}\gamma\sigma_{2k+4})$, and 
$s^2_j = c(\sigma_j\delta\sigma_j)$.
Therefore, $\{2^2,2k+1\}$ is complete.

Define the set $S$ as
\begin{align*}
S &= 
\left\{ \begin{array}{ll}
\{(10k + \gamma)/3: \gamma\in\{15,21,24\}\} & \text{ if } k\equiv 0\!\!\!\pmod{3},\\
\{(10k + \gamma)/3: \gamma\in\{17,20,23\}\} & \text{ if $k\equiv 1\!\!\!\pmod{3}$, and}\\
\{(10k + \gamma)/3: \gamma\in\{16,19,25\}\} & \text{ if } k\equiv 2\!\!\!\pmod{3}.
\end{array}\right.
\end{align*}
Let $s$ be the nondecreasing sequence corresponding to the set $\cup_{2k+1}\{2k+2\}\cup (\cup_2S)$; a direct computation shows that $s\in \mathfrak{S}^2(\{2^3,2k+1\})$.
Since $k\geq 3$, it follows that $4k+8 > \max\{ 4k+6,(10k+28)/3\}$,
and hence $s$ is deficient with $d=2k+2$.
Therefore $\{2^3,2k+1\}$ is incomplete by Lemma \ref{lem:handyresult}.
\end{proof}

We summarize the results of this section with the following.
Let $\Gamma_1$ and $\Gamma_2$ be the sets of partitions given by 
\begin{align*}
\Gamma_1 &= \{ \{a\}, \{1^a\}, \{2^a\}: a\geq 1 \}\text{ and }\\
\Gamma_2 
&= \ \ \,\,\ \{ \{a,b\}: a,b\geq 1,\ \gcd(a,b)\leq 2\} 
\cup \{ \{1^a,2^b\}: a\geq 1,\ b\geq 1\} 
\\&\phantom{=\ \ }\cup \{ \{1^a,b\}: a\geq 2,\ 3\leq b\leq 5\} 
 \cup \{ \{1^2,b\}: b\geq 3\} 
\\&\phantom{=\ \ } \cup \{ \{2^a,b\}: a\geq 2,\ b\in\{3,4\}\} 
 \cup \{ \{2^2,b\}: b\geq 5\text{ and odd}\}
\cup \{ \{2^3,5\}\}.
\end{align*}

\begin{theorem}
\label{thm:12}
Let $\lambda$ be a complete partition of a positive integer $n$.
If $\lambda$ contains exactly one distinct part size, then $\lambda\in \Gamma_1$.
If $\lambda$ contains exactly two distinct part sizes, then $\lambda \in \Gamma_2$.
\end{theorem}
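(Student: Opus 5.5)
The proof is a contrapositive sorting argument. We take a partition $\lambda$ not in $\Gamma_1$ (respectively $\Gamma_2$) and exhibit a sub-partition $\lambda'\subseteq\lambda$ that one of Propositions \ref{prop:k}--\ref{prop:22k+1} (or Table \ref{tab:counterex}) shows to be incomplete. Lemma \ref{lem:rprime} then makes $\lambda$ incomplete. No new constructions should be needed; the work is an exhaustive case split on part sizes and multiplicities.

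\textbf{One part size.} Write $\lambda=k^n$. If $k\in\{1,2\}$ or $n=1$, then $\lambda\in\Gamma_1$ already. Otherwise $k\geq 3$ and $n\geq 2$, and Proposition \ref{prop:k} gives that $k^n$ is incomplete. Equivalently, $\{k,k\}\subseteq\lambda$ is incomplete by Example \ref{ex:333777ex}, and we conclude by Lemma \ref{lem:rprime}.

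\textbf{Two part sizes.} Write $\lambda=\{a^p,b^q\}$ with $a<b$. The case split is as follows.
\begin{compactenum}
\item If some part size $c\geq 3$ has multiplicity at least $2$, then $\{c,c\}\subseteq\lambda$ is incomplete. So we may assume every part of size at least $3$ appears exactly once.
\item If $a,b\geq 3$, then $p=q=1$, and Proposition \ref{prop:knk} forces $\gcd(a,b)\leq 2$; this is the first family of $\Gamma_2$.
\item If $\{a,b\}=\{1,2\}$, then $\lambda\in\Gamma_2$ directly.
\item If $a=1$ and $b\geq 3$ (so $q=1$): when $p=1$, $\gcd(1,b)=1$ and we are done. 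When $p=2$, $\lambda\in\Gamma_2$. When $p\geq 3$ and $b\geq 6$, $\{1^3,b\}\subseteq\lambda$ is incomplete by Proposition \ref{prop:1k}. The remaining cases $3\leq b\leq 5$ are in $\Gamma_2$.
\item If $a=2$ and $b\geq 3$ (so $q=1$): when $p=1$, use Proposition \ref{prop:knk}. Then $\gcd(2,b)\le 2$ always, so $\{2,b\}$ lies in the first family of $\Gamma_2$. When $p\geq 2$ and $b\in\{3,4\}$, $\lambda\in\Gamma_2$. When $b=2k\geq 6$ is even, $\{2^2,2k\}$ is incomplete by Proposition \ref{prop:2k}. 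When $b=5$, $p\leq 3$ lands in $\Gamma_2$ and $p\geq 4$ contains the incomplete $\{2^4,5\}$ from Proposition \ref{prop:22k+1}. When $b=2k+1\geq 7$, $p=2$ is in $\Gamma_2$ and $p\geq 3$ contains the incomplete $\{2^3,2k+1\}$.
\end{compactenum}

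The main obstacle is bookkeeping rather than mathematics. We must make sure the boundary cases match $\Gamma_2$ exactly, for instance $b\in\{3,4,5\}$ with $a=1$, and $b=4$ with $a=2$ (where $2k=4$ falls outside Proposition \ref{prop:2k}'s range $k\geq3$). In each such case we only need that $\lambda\in\Gamma_2$, since the theorem asserts a necessary condition, not completeness. So the boundary cases require no further argument beyond checking membership in $\Gamma_2$.
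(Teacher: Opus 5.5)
Your proposal is correct and follows essentially the same route as the paper: rule out repeated parts of size at least $3$ using Proposition~\ref{prop:k} with Lemma~\ref{lem:rprime}, then split on the smaller part size and apply Propositions~\ref{prop:knk}, \ref{prop:1k}, \ref{prop:2k} and~\ref{prop:22k+1}. Your bookkeeping is more explicit than the paper's, which handles the whole smaller-part-$2$ case in one sentence, and your boundary checks ($b\in\{3,4,5\}$ with smaller part $1$; $b=4$ and $b=5$ with smaller part $2$) match $\Gamma_2$ exactly.
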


\begin{proof}
If $\lambda$ has only one distinct part size, the result follows directly from Proposition \ref{prop:k}.
Now suppose $\lambda$ has exactly two distinct part sizes $p$ and $q$ with $p < q$.

First, suppose $p\geq 3$. 
Then $\{p^2\}$ and $\{q^2\}$ are not contained in $\lambda$, by Proposition \ref{prop:k} and Lemma \ref{lem:rprime}.
So $\lambda = \{p,q\}$.
So by Proposition \ref{prop:knk}, $\lambda\in \Gamma_2$.

Now, suppose $p=2$.
By Propositions \ref{prop:knk}, \ref{prop:2k} and \ref{prop:22k+1}, it follows that $\lambda\in\Gamma_2$.

Last, suppose $p=1$, and hence $\lambda = \{1^a,q^b\}$.
If $q\geq 3$, then necessarily $b=1$, and by Proposition \ref{prop:1k}, $\lambda\in \Gamma_2$.
Finally, if $q=2$, then $\lambda\in \Gamma_2$ as well.
\end{proof}

\subsection{Partitions with at least three part sizes}

Observe that $|\mathfrak{S}^2(\{1,2,3\})|=|\mathfrak{C}(\{1,2,3\})|=18$, so $\{1,2,3\}$ is complete, and in what follows, we show this is only complete partition with at least three distinct parts.

We begin with first classifying the completeness of partitions with three parts for which the sum of smallest two parts divide the largest part.
Then, we give some number-theoretic results, which we then leverage to classify the complete partitions with three or more parts.

\begin{proposition}
\label{prop:aa12a1}
The partition $\{a,a+1,2a+1\}$ is complete if and only if $a=1$.
The partition $\{a,a+2,2a+2\}$ is incomplete for all positive integers $a$.
\end{proposition}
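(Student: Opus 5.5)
The case $a=1$ of the first statement is the partition $\{1,2,3\}$, which was observed just before this proposition to be complete by the exhaustive count $|\mathfrak{S}^2(\{1,2,3\})|=|\mathfrak{C}(\{1,2,3\})|=18$. Every other case asserts incompleteness, so my plan is to write down, for each such partition $\lambda$, an explicit deficient sequence $s\in\mathfrak{S}^2(\lambda)$ and then apply Lemma~\ref{lem:handyresult}. This follows the pattern of Propositions~\ref{prop:knk}--\ref{prop:22k+1}.

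\textbf{Choice of $d$.} The threshold $d$ has to absorb more than half of the terms. If $d$ only absorbs the block of size $2a+1$ (or $2a+2$), the bound $M\le n+\sigma-2$ fails. In that case the top $2a+1$ terms would have to average about $6a+2$, and this exceeds the allowed maximum $M\le 6a+1$. So for $\{a,a+1,2a+1\}$ (with $n=4a+2$) I will place the block of size $2a+1$ and the block of size $a$ at or below $d$. This gives $\sigma=3a+1$ and $n+\sigma=7a+3$. The remaining block of size $a+1$ sits at a single large value $M\le 7a+1$, and I need $2d+2<7a+3$, i.e.\ $d\le \lfloor (7a)/2\rfloor$.

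\textbf{The ansatz.} Concretely, take $s=(u:2a+1)(v:a)(M:a+1)$ with $u<v=d<M$. The constraints are:
\begin{itemize}
\item the total equation $(2a+1)u+av+(a+1)M=(4a+2)(4a+1)$;
\item the Landau condition at $t=2a+1$, which forces $u\ge 2a$;
\item the Landau condition at $t=3a+1$, i.e.\ $(2a+1)u+av\ge(3a+1)3a$;
\item the Landau conditions at intermediate $t$.
\end{itemize}
Within a constant block the partial sums are linear in $t$ while $t(t-1)$ is convex. So it suffices to check each $t$ at a block boundary.

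\textbf{Feasibility and residues.} Counting heuristically, the low part can carry up to about $(3a+1)\cdot 3.5a$. This leaves about $5.5a^2$ for the top $a+1$ terms, an average of about $5.5a<7a$, so there is slack. The real work is solving the total equation in integers with $u,v,M$ in the allowed ranges. As in Propositions~\ref{prop:1k} and~\ref{prop:22k+1}, I expect this to require splitting into residue classes (here of $a$ modulo $2$ or $3$). If one value per block cannot meet the total exactly, I will adjust by moving the low block and the $d$-block in opposite directions while keeping the shape fixed.

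\textbf{The second family.} For $\{a,a+2,2a+2\}$ (with $n=4a+4$) I will use the same template: blocks of sizes $2a+2$ and $a$ at or below $d$, so $\sigma=3a+2$, and the block of size $a+2$ on top with $M\le 7a+4$ and $d\le(7a+4)/2$ roughly. Small $a$ (say $a\le 2$) may need separate hand-chosen sequences, since the slack is thin there.

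\textbf{Main obstacle.} I expect the hardest step to be satisfying the exact total equation simultaneously with $d\le\lfloor(7a)/2\rfloor$ and with the Landau inequality at the boundary $t=3a+1$. The first thing I will try is $u=2a$ and $v=d$ maximal, solving for $M$. If divisibility fails, I will perturb $u$ and $v$, and if necessary split the low block into two values; that keeps every value at or below $d$, so $\sigma$ is unchanged.
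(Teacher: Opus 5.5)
There is a genuine gap. The template you commit to cannot work for any $a\ge 2$, and the fallback you name is not allowed.

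\textbf{Why the template fails.} Take $s=(u:2a+1)(v:a)(M:a+1)$ with $n=4a+2$, and reduce the total equation $(2a+1)u+av+(a+1)M=(4a+2)(4a+1)$ modulo $a+1$. Since $4a+2\equiv -2$, $4a+1\equiv -3$ and $2a+1\equiv a\equiv -1$, you need $u+v\equiv -6 \pmod{a+1}$. Deficiency at $d=v$ forces $u<v\le\lfloor 7a/2\rfloor$, so $u+v\le 7a-1$. The largest member of the class of $-6$ not exceeding $7a-1$ is $6a$ (the next one is $7a+1$). Hence $u+v\le 6a$ and $u\le 3a-1$, which gives $(2a+1)u+av=(a+1)u+a(u+v)\le (a+1)(3a-1)+6a^2=9a^2+2a-1$. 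But $M\le 7a+1$ is equivalent to $(2a+1)u+av\ge 9a^2+4a+1$. For example, when $a=2$ you need $5u+2v\ge 46$ with $u<v\le 7$, and the maximum is $44$. Taking $d=u$ fails as well, as you note, and $d\ge M$ never gives deficiency.

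So the slack in your heuristic count is destroyed by this congruence; this is not a matter of residue bookkeeping. Perturbing $u$ and $v$ stays inside this infeasible family. Splitting the low block into two values produces four distinct terms, so the sequence is no longer in $\mathfrak{S}^2(\{a,a+1,2a+1\})$.

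The same obstruction hits your template for $\{a,a+2,2a+2\}$ whenever $a$ is odd. There $u+v\equiv -10\pmod{a+2}$ forces $u+v\le 6a+2$, hence $(2a+2)u+av\le 9a^2+8a$, whereas $M\le 7a+4$ requires at least $9a^2+10a+4$. It also fails for $a=2$. Incidentally, your claim that $d$ must absorb more than half the terms is false. The paper's closing remarks (after Proposition~\ref{prop:nodef1n}) give, for odd $a$, a deficient sequence with $d=(5a-1)/2$ and $\sigma=a$.

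\textbf{How to repair it, and the paper's route.} The deficiency approach can be fixed by swapping the two lower blocks, so that the large block sits at $d$. One checks that the following sequences lie in the appropriate $\mathfrak{S}^2(\lambda)$ and are deficient:
\begin{itemize}
\item for $a\ge 2$, $(\lceil 5a/2\rceil : a)(\lfloor 7a/2\rfloor : 2a+1)(\lceil 13a/2\rceil+2 : a+1)$ with $d=\lfloor 7a/2\rfloor$;
\item for $a\ge 1$, $(\lceil (5a+1)/2\rceil : a)(\lfloor (7a+3)/2\rfloor : 2a+2)(\lceil (13a+9)/2\rceil : a+2)$ with $d=\lfloor (7a+3)/2\rfloor$.
\end{itemize}

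The paper avoids deficiency altogether. It takes the non-deficient sequence $(a:a)(3a+1:a+1)(6a+1:2a+1)$ and observes that any $\phi$ with this characteristic sequence must have $\phi(a+1)=\phi(4a+1)=2a$, since the other antidiagonals fall outside $Z_{4a+2}$ once $a\ge 2$. This is a contradiction. Similarly, for $(a:a)(3a+2:a+2)(6a+4:2a+2)$, rows $a+1$ and $4a+3$ are both forced to $2a+1$. That argument is shorter and needs no parity cases. The repaired deficiency route has the advantage of fitting the uniform framework of Lemma~\ref{lem:handyresult}.
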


\begin{proof}
If $a=1$, then $\{a,a+1,2a+1\}=\{1,2,3\}$, which is complete.
Now, suppose $a\geq 2$.
Define the sequence $s$ as 
$(a:a)(3a+1:a+1)(6a+1:2a+1)$.
A direct computation shows that $s\in\mathfrak{S}^2(\{a,a+1,2a+1\})$.
Suppose there exists $\phi\in S_{4a+2}$ such that $c(\phi)=s$.
Then $\phi(a+1)+(a+1)\in\{a,3a+1,6a+1\}$ and $\phi(a+1)\in Z_{4a+2}$, so $\phi(a+1)=2a$ since $a\geq 2$.
Similarly, $\phi(4a+1)+4a+1\in \{a,3a+1,6a+1\}$ and $\phi(4a+1)\in Z_{4a+2}$, so necessarily $\phi(4a+1) = 2a$ as well, which is a contradiction, and therefore $\{a,a+1,2a+1\}$ is incomplete when $a\geq 2$.

In a similar manner, for all $a\geq 1$, the sequence $s'=(a:a)(3a+2:a+2)(6a+4:2a+2)$ belongs to $\mathfrak{S}^2(\{a,a+2,2a+2\})$ and, if there exists $\phi'\in S_{4a+4}$ such that $c(\phi')=s'$, then $\phi(a+1)=\phi(4a+3)=2a+1$, which is a contradiction.
So $\{a,a+2,2a+2\}$ is incomplete for all $a\geq 1$.
\end{proof}

\begin{proposition}
\label{prop:abaplusb}
Let $\lambda$ be a partition of the form $\{a,b,k(a+b)\}$ for positive integers $a,b,k$ with $a < b$. Then $\lambda$ is complete if and only if $\lambda = \{1,2,3\}$.
\end{proposition}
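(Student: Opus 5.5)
The plan is to handle the two directions separately. The ``if'' direction is already done: $\{1,2,3\}=\{1,2,1\cdot(1+2)\}$, and the exhaustive count $|\mathfrak{S}^2(\{1,2,3\})|=|\mathfrak{C}(\{1,2,3\})|=18$ was noted at the start of this subsection. For the ``only if'' direction, fix $a<b$ and $k\geq1$ with $\{a,b,k(a+b)\}\neq\{1,2,3\}$. Write $m=a+b$, $c=km$ and $n=m+c$. I will exhibit a sequence $s\in\mathfrak{S}^2(\{a,b,c\})$ and show $s\notin\mathfrak{C}(\{a,b,c\})$. For this I will use one of two tools. The first is the ``forced collision'' argument from the proof of Proposition~\ref{prop:aa12a1}: two row indices $x\neq x'$ must have $\phi(x)=\phi(x')$. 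The second, where it is easier, is deficiency together with Lemma~\ref{lem:handyresult}.

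The candidate sequence generalizes the one used in Proposition~\ref{prop:aa12a1}, namely $(a:a)(3a+1:a+1)(6a+1:2a+1)$. I will take
\begin{equation*}
s=(u:a)(v:b)(w:c), \qquad u<v<w,
\end{equation*}
with $u$ small (near $a$) and $v$ near $a+2b-1$. Then $w$ is determined by the equality $au+bv+cw=n(n-1)$. Because $c=k(a+b)$, this divisibility condition is where the hypothesis ``$a+b$ divides the largest part'' enters. I will choose $(u,v)$ from a short list of nearby values so that $c$ divides $n(n-1)-au-bv$. Modulo $m$ this condition becomes $au+bv\equiv 0\pmod m$, i.e.\ $a(u-v)\equiv0\pmod m$. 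For example $u-v\equiv 0\pmod{m/\gcd(a,m)}$ suffices, and I will then adjust $u$ and $v$ by a multiple of $k$ to fix the remaining residue modulo $k$. Checking the Landau-type inequalities~\eqref{eq:klandaucondition} with $k=2$ is routine, since only the two breakpoints $t=a$ and $t=a+b$ matter (the partial sums are piecewise linear in $t$). I expect the argument to need $v\geq a+2b-1$ and $u\geq a-1$.

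For impossibility I will follow Proposition~\ref{prop:aa12a1} and look at two rows $x,x'$. I choose each of them so that exactly one of $u,v,w$ minus that row index lies in $Z_n$, and both land on the same column. Natural choices are $x'=n-1$, where $w-(n-1)$ is forced once $u,v<n-1$, and a middle row $x\in(u,v]$ with $w-x\geq n$, which forces $\phi(x)=v-x$. Setting $v-x=w-n+1$ gives the collision. If these parameter choices cannot be made simultaneously for some small cases (for instance $a=1$, or $k=1$ with $b=a+1$, the latter already covered by Proposition~\ref{prop:aa12a1} except for $a=1$), I will instead take $d=u$ or $d=v$ and verify that $s$ is deficient, in the style of Propositions~\ref{prop:1k} and~\ref{prop:2k}.

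The main obstacle will be reconciling three requirements at once: the divisibility that fixes $w$, the Landau inequalities, and the equation $v-x=w-n+1$ for a legal row $x$ (equivalently, deficiency). These pull in different directions as $k$ grows. I expect to split into the cases $k=1$ and $k\geq2$; for $k\geq2$ the large block has so much room that $w$ is close to $n-1+m$, and deficiency with $d=v$ should follow easily. The genuinely delicate small cases, $k=1$ with small $a$, are where the exclusion of $\{1,2,3\}$ should appear as the unique failure of the construction.
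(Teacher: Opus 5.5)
\textbf{Gap: the template cannot handle $\{1,2,6\}$.} Your plan never fixes $(u,v)$, and the template you commit to, $s=(u:a)(v:b)(w:c)$ with $u<v<w$, provably fails in one case. Take $\{1,2,6\}$, so $a=1$, $b=2$, $k=2$, $n=9$. The Landau conditions, together with $u+2v+6w=72$ and $u<v<w$, leave exactly five sequences:
$(0:1)(3:2)(11:6)$, $(0:1)(6:2)(10:6)$, $(2:1)(5:2)(10:6)$, $(2:1)(8:2)(9:6)$ and $(4:1)(7:2)(9:6)$.
These are the characteristic sequences of $021876543$, $058761432$, $518760432$, $287654310$ and $487652301$, respectively. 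So no choice of $(u,v)$ works here, and neither deficiency nor a forced collision can succeed. Your expectation that $k\ge 2$ is easy, and that $\{1,2,3\}$ is the unique failure of the construction, is therefore false. The paper treats $\{1,2,6\}$ separately with $(6:6)(10:1)(13:2)$ from Table~\ref{tab:counterex}, which is deficient with $d=6$. Note that in this sequence the largest block comes first, which your template does not allow.

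\textbf{The missing explicit choice for the other cases.} Take $u=a+k-1$, $v=2a+b+k-1$ and $w=(k+2)(a+b)-2$. This means $u-v=-(a+b)$ together with a common shift of $(a,2a+b)$ by $\delta\equiv -1 \pmod k$. A shift by a multiple of $k$, as you wrote, would not change the residue mod $k$. With $d=v$ you get $n+\sigma=(k+2)(a+b)$, $2d+2=4a+2b+2k$ and $M+1=(k+2)(a+b)-1$. So $s$ is deficient if and only if $(k-2)a+k(b-2)>0$. This fails only in two situations:
\begin{itemize}
\item $k=1$ with $b\in\{a+1,a+2\}$, which is covered by both parts of Proposition~\ref{prop:aa12a1}, not only $b=a+1$;
\item $\{1,2,6\}$.
\end{itemize}

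\textbf{A slip in your anticipated constraint.} You expect to need $v\ge a+2b-1$, but Landau at $t=a+b$ with $u=a-1$ only forces $v\ge 2a+b-1$. Imposing $v\ge a+2b-1$ would make deficiency at $d=v$ impossible when $k=1$, since then $2v+2\ge 2a+4b>3(a+b)=n+\sigma$.
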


\begin{proof}
Suppose $\lambda \neq \{1,2,3\}$.
By Proposition \ref{prop:aa12a1}, the result holds whenever $k=1$ and $b\in\{a+1,a+2\}$.
Hence, we may assume that either $k\geq 2$ or $b\geq a+3$.
These assumptions imply that either $b=k=2$ or $(k-2)a+k(b-2)>0$.
If $b=k=2$, then $a=1$, and a deficient sequence of $\{1,2,6\}$ is given in Table \ref{tab:counterex}, so we may assume $(k-2)a+k(b-2)>0$, which is equivalent to $(k+2)(a+b) > 4a+2b+2k$.
Furthermore $2a+b+k-1 \leq 2a + kb + ka - 1 + 2b = (k+2)(a+b)-1$.

Let $n=(k+1)(a+b)$, and consider the sequence given by \[ s = (a+k-1:a)(2a+b+k-1:b)((k+2)(a+b)-2:k(a+b)).\] 
A direct computation shows that $s \in \mathfrak{S}^{2}(\{a,b,k(a+b)\})$. 
It follows that $(k+2)(a+b) > \max\{4a+2b+2k,(k+2)(a+b)-1\}$; so $s$ is deficient, with $d = 2a+b+k-1$.
So $\{a,b,k(a+b)\}$ is incomplete by Lemma \ref{lem:handyresult}.
%
\end{proof}

\begin{lemma}
\label{lem:rmwork}
Let $a$, $b$, and $c$ be distinct positive integers such that $a < b < c$, 
\begin{compactenum}[\rm(a)]
\item If $c\geq 5$, then $(c+ab)/(a+b) < c/2$.
\item If $a$ and $b$ are even, $c$ is odd, and $c\geq 7$, then $(2c+ab)/(a+b) < (c+1)/2$. 
\end{compactenum}
\end{lemma}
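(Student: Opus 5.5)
Both parts reduce to elementary polynomial inequalities, so the plan is to clear denominators, use the lower bound on $c$ coming from $a<b<c$ together with the parity hypotheses, and handle the few small cases where that bound is not strong enough on its own.

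For (a), since $a+b>0$, the inequality $(c+ab)/(a+b)<c/2$ is equivalent to $2c+2ab<c(a+b)$, that is,
\begin{equation*}
c(a+b-2) > 2ab.
\end{equation*}
Because $a<b$ we have $a+b-2\geq 1>0$, so the left side is increasing in $c$. It therefore suffices to check the inequality at the smallest admissible value $c=b+1$, unless that value is below the hypothesis $c\geq 5$. Expanding, $(b+1)(a+b-2)-2ab = b(b-1-a)+a-2$. If $b\geq a+2$, this is at least $b+a-2>0$. If $b=a+1$, it equals $a-2$, which is positive once $a\geq 3$. The remaining cases are $(a,b)=(1,2)$ and $(2,3)$. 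Here the requirement becomes $c>4$ and $3c>12$ respectively, and both hold because $c\geq 5$. This is exactly where the hypothesis $c\geq 5$ is used.

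For (b), clearing denominators turns $(2c+ab)/(a+b)<(c+1)/2$ into $4c+2ab<(c+1)(a+b)$, that is,
\begin{equation*}
c(a+b-4) > 2ab-a-b.
\end{equation*}
Since $a,b$ are even with $a<b$, we have $a\geq 2$, $b\geq a+2\geq 4$, and so $a+b-4\geq 2>0$. Since $c$ is odd and $c>b$ with $b$ even, also $c\geq b+1$. Again it suffices to check $c=b+1$. Expanding, $(b+1)(a+b-4)-(2ab-a-b)=b(b-2-a)+2a-4$. If $b\geq a+4$, this is at least $2b+2a-4>0$. If $b=a+2$, it equals $2a-4$, which is positive for $a\geq 4$. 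The only leftover case is $(a,b)=(2,4)$. There the inequality reads $2c>10$, which holds since $c\geq 7$.

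I do not expect any real obstacle. The main care needed is to identify the correct minimal value of $c$, which is $b+1$ in both parts, and in (b) also uses the parity of $c$. One must also make sure that the small exceptional pairs are exactly the ones covered by the numerical hypotheses $c\geq 5$ and $c\geq 7$.
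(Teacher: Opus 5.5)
Your proof is correct and follows essentially the same route as the paper: you dispose of the small pairs $(a,b)\in\{(1,2),(2,3)\}$ (resp.\ $(2,4)$) directly using $c\geq 5$ (resp.\ $c\geq 7$), and otherwise use that the coefficient of $c$ is positive to reduce to $c=b+1$. The only difference is cosmetic: the paper substitutes $a=m-r$, $b=m+r$, and its key quantities $r^2+rm-r-1$ and $r^2+rm-2r-2$ are exactly half of your $b(b-1-a)+a-2$ and $b(b-2-a)+2a-4$.
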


\begin{proof}[Proof of \rm(a).]
Suppose $c\geq 5$.
Then $(c+2)/3 < c/2$ and $(c+6)/5 < c/2$.
So if $(a,b)\in\{(1,2),(2,3)\}$, then $(c+ab)/(a+b)<c/2$.

Now, suppose $(a,b)\neq (1,2)$ and $(a,b)\neq (2,3)$.
Let $m$ and $r$ be defined such that $a = m-r$ and $b=m+r$.
Then either $r\geq 1$ and $m\geq 2$, or $r=1/2$ and $m\geq 7/2$.
Observe that, under these conditions, we have that $r^2+rm-r-1 > 0$, and additionally, $c \geq m+r+1$.
So
\begin{align*}
\dfrac{c+ab}{a+b}
 &=\dfrac{c+m^2-r^2}{2m}
=\dfrac{m^2-r^2-(m-1)c}{2m} + \dfrac c2\\
&\leq \dfrac{m^2-r^2-(m-1)(m+r+1)}{2m} + \dfrac c2\\
&=\dfrac{r+1-rm-r^2}{2m} + \dfrac c2
< \dfrac c2.\qedhere
\end{align*}
\end{proof}
\begin{proof}[Proof of \rm(b).]
Suppose $c\geq 7$.
Then $(2c+8)/6 < (c+1)/2$, so $(2c+ab)/(a+b)<(c+1)/2$ if $(a,b)=(2,4)$.
Now, suppose $(a,b)\neq (2,4)$.
Again, let $m$ and $r$ be defined such that $a = m-r$ and $b=m+r$.
Then $m$ and $r$ are integers such that 
$m\geq 4$ and $r\geq 1$.
Observe that, under these conditions, we have that $r^2+rm-2r-2 > 0$, and similarly, $c\geq m +r+1$.
So
\begin{align*}
\dfrac{2c+ab}{a+b}
&=\dfrac{2c+m^2-r^2}{2m} 
=\dfrac{m^2-r^2-m-c(m-2)}{2m} + \dfrac{c+1}2 \\
&\leq\dfrac{m^2-r^2-m-(m+r+1)(m-2)}{2m} + \dfrac{c+1}2 \\
&=\dfrac{-r^2-rm+2r+2}{2m} + \dfrac{c+1}2
<\dfrac{c+1}2.\qedhere
\end{align*}
\end{proof}

\begin{lemma}
\label{lem:ntresult}
Let $a$, $b$, and $c$ be positive integers such that $a<b<c$ and $a+b$ does not divide $c$.
There exist distinct integers $x$ and $y$ such that $-a \leq y \leq (c-1)/2$, $-b\leq x \leq (c-1)/2$, and 
\begin{compactenum}[\rm(a)]
\item if $\gcd(a,b)=1$ and $c\geq 5$, then $ax+by = c$; 
\item if $\gcd(a,b)=2$, $c$ is odd, and $c\geq 7$, then $ax+by = 2c$.
\end{compactenum}
\end{lemma}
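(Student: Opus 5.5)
The plan is to parametrize the solutions of the linear Diophantine equation by the difference $\delta = x-y$. Then I pick the solution whose $\delta$ lies in the window $(-b,a]$, and read off all four bounds from Lemma \ref{lem:rmwork}.

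\emph{Part (a).} Since $\gcd(a,b)=1$, the equation $ax+by=c$ has integer solutions. They form the family $(x_0+bt,\ y_0-at)$ for $t\in\mathbb{Z}$. Along this family $\delta=x-y$ changes by exactly $a+b$ per step, so $\delta$ runs through a full residue class modulo $a+b$. Any solution satisfies
\begin{equation*}
(a+b)x = c + b\delta, \qquad (a+b)y = c - a\delta .
\end{equation*}
The half-open interval $(-b,a]$ has length $a+b$, so I choose the unique solution with $-b<\delta\le a$. Then:
\begin{compactenum}[(i)]
\item Distinctness: $\delta=0$ would give $(a+b)x=c$, contradicting $a+b\nmid c$. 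Hence $x\neq y$.
\item Upper bounds: $\delta\le a$ gives $x\le (c+ab)/(a+b)$, and $\delta\ge -b$ gives $y\le (c+ab)/(a+b)$. By Lemma \ref{lem:rmwork}(a) this quantity is $<c/2$. Since $x,y$ are integers strictly below $c/2$, they are at most $(c-1)/2$.
\item Lower bounds: $\delta\ge -b+1$ gives $(a+b)x\ge c-b(b-1)>-b(a+b)$, so $x\ge -b$. Similarly $\delta\le a$ gives $(a+b)y\ge c-a^2>-a(a+b)$, so $y\ge -a$.
\end{compactenum}

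\emph{Part (b).} Now $\gcd(a,b)=2$ and $2c$ is even, so $ax+by=2c$ is solvable. The solution family steps by $(b/2,-a/2)$, so $\delta$ runs through a residue class modulo $(a+b)/2$. The identities become $(a+b)x=2c+b\delta$ and $(a+b)y=2c-a\delta$.
\begin{compactenum}[(i)]
\item Choice of $\delta$: the window $(-b,a]$ has length $a+b$, so it contains two consecutive representatives of the class. At least one of them is nonzero, and I take that one. (This handles cases such as $a=2$, $b=4$, $c=9$, where $(a+b)/2$ divides $c$ and $\delta=0$ really is attainable.) A nonzero $\delta$ gives $x\ne y$.
\item Upper bounds: exactly as before, $x,y\le (2c+ab)/(a+b)$. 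By Lemma \ref{lem:rmwork}(b) this is $<(c+1)/2$. Because $c$ is odd, $(c+1)/2$ is an integer, so the integers $x,y$ are at most $(c-1)/2$.
\item Lower bounds: these follow as in part (a), with $c$ replaced by $2c>0$.
\end{compactenum}

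\emph{Main obstacle.} The key design choice is the window for $\delta$. It must balance the two upper bounds so that both reduce to the single quantity controlled by Lemma \ref{lem:rmwork}, while still being long enough to contain a (nonzero) representative. The other delicate point is distinctness in case (b), where the hypothesis $a+b\nmid c$ does not rule out $\delta=0$ by itself. The remaining steps are routine inequality checks.
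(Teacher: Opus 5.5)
Your proof is correct and follows the paper's route. You move along the solution family until $x-y$ lies in a window of length $a+b$ between $-b$ and $a$, read off $x\ge -b$, $y\ge -a$ and $x,y\le (c+ab)/(a+b)$ (resp.\ $(2c+ab)/(a+b)$), and finish with Lemma~\ref{lem:rmwork}.

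Your handling of distinctness in (b) actually improves on the paper. The paper asserts $x\neq y$ ``since $a+b$ does not divide $c$'', but in case (b) $a+b$ is even and never divides the odd $c$, while $x=y$ only needs $(a+b)/2\mid c$ (e.g.\ $(a,b,c)=(2,4,9)$ with $x=y=3$). Your use of the finer step $(b/2,-a/2)$ to pick a nonzero $\delta$ in $(-b,a]$ is exactly what closes that gap.
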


\begin{proof}[Proof of \rm(a).]
Suppose $\gcd(a,b)=1$ and $c\geq 5$.
Then there exist integers $x$ and $y$ such that $ax + by = c$.
Furthermore, since $a(x-b)+b(y+a)=c$ as well, we may conclude after some iterations, that $-b \leq x - y \leq a$; note that $x\neq y$ since $a+b$ does not divide $c$.

Since $x-y \leq a$ and $ax+by=c$, we have that $(c-by)/a \leq a+y$, or equivalently $y \geq (c-a^2)/(a+b)$,  and hence
\begin{align*}
y \geq \dfrac{c-a^2}{a+b} 
\geq \dfrac{-a^2}{a+b} 
\geq -a\left(\dfrac{a}{a+b}\right)
\geq -a.
\end{align*}
Similarly, we have that $(c-ax)/b \geq x - a$, or equivalently $x \leq (c+ab)/(a+b)$.
In addition, since $y-x \leq b$, we also have that $x\geq -b$ and $y \leq (c+ab)/(a+b)$.
From Lemma \ref{lem:rmwork}, we have that $x<c/2$ and $y<c/2$, and since $x$ and $y$ are integers, $x\leq (c-1)/2$ and $y\leq (c-1)/2$.
\end{proof}
\begin{proof}[Proof of \rm(b).]
Suppose $\gcd(a,b)=2$, $c$ is odd, and $c\geq 7$.
Then there exist integers $x$ and $y$ such that $ax + by = 2c$.
Furthermore, since $a(x-b)+b(y+a)=2c$ as well, we may conclude after some iterations, that $-b \leq x - y \leq a$; again, note that $x\neq y$ since $a+b$ does not divide $c$.

Through similar reasoning in the previous proof, we may deduce that 
$-b \leq x \leq (2c+ab)/(a+b)$ and $-a \leq y \leq (2c+ab)/(a+b)$.
By Lemma \ref{lem:rmwork}, we have that $x < (c+1)/2$ and $y< (c+1)/2$.
Since $c$ is odd and $x$ and $y$ are both integers, it follows that $x\leq (c-1)/2$ and $y\leq (c-1)/2$.
\end{proof}

\begin{proposition}
\label{prop:abc}
Let $a,b,c$ be distinct, positive integers such that $a < c$ and $b < c$.
Then $\{a,b,c\}$ is complete if and only if $\{a,b,c\}=\{1,2,3\}$.
\end{proposition}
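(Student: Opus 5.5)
The ``if'' direction is the observation above that $|\mathfrak{S}^2(\{1,2,3\})|=|\mathfrak{C}(\{1,2,3\})|=18$. For the converse, I relabel so that $a<b<c$ and assume $\{a,b,c\}\neq\{1,2,3\}$; the goal is to show the partition is incomplete. The reduction cascades through the earlier results in three stages.

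First, Lemma~\ref{lem:rprime} handles the two-part subpartitions: if $\gcd(a,b)\geq 3$, then $\{a,b\}$ is incomplete by Proposition~\ref{prop:knk}, so $\{a,b,c\}$ is incomplete. The same applies if $\gcd(a,c)\geq 3$ or $\gcd(b,c)\geq 3$. Second, if $a+b$ divides $c$, Proposition~\ref{prop:abaplusb} finishes the case. The remaining cases are therefore $\gcd(a,b)\in\{1,2\}$ with $a+b\nmid c$. Within these, $c$ small (here $c\le 6$) leaves only finitely many triples, which I would check by listing them and exhibiting a deficient sequence for each, in the style of Table~\ref{tab:counterex}. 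Similarly, when $\gcd(a,b)=2$ and $c$ is even, every part is even. I would treat this case separately: either one of the pairwise gcds is at least $3$, or I use a doubled version of the construction below, or I exhibit a deficient sequence directly.

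The main case uses Lemma~\ref{lem:ntresult} to build a deficient sequence. In case (a), with $\gcd(a,b)=1$ and $c\ge5$, take the distinct integers $x,y$ with $ax+by=c$, $-a\le y\le (c-1)/2$ and $-b\le x\le (c-1)/2$. The $c$ copies of the repeated value occupy the middle, and the $a$ copies and the $b$ copies are placed symmetrically offset by $x$ and $y$ from a central value. The intended shape, mirroring the sequence in Proposition~\ref{prop:abaplusb}, is
\[
s=(u+x:a)(u+y:b)(v:c),
\]
up to ordering, with $u$ and $v$ chosen so that the total sum equals $n(n-1)$, where $n=a+b+c$. The identity $ax+by=c$ is exactly what makes this total sum come out right. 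The bounds on $x$ and $y$ are designed so that the Landau inequalities~\eqref{eq:klandaucondition} hold with $k=2$ for every prefix. They also ensure that the small block (the $a$ or $b$ copies with the lower value) sits at or below some threshold $d$ with $n+\sigma>\max\{2d+2,M+1\}$. Lemma~\ref{lem:handyresult} then gives incompleteness. Case (b), with $\gcd(a,b)=2$, $c$ odd and $c\ge7$, is handled the same way using $ax+by=2c$ and half-integer shifts; the parity of $c$ is what keeps the entries integral.

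The main obstacle is choosing the explicit entries in the main case and verifying both the Landau prefix inequalities and the deficiency inequality. The difficulty is the sign ambiguity of $x$ and $y$: which of the $a$-block and the $b$-block comes first in sorted order depends on whether $x<y$ or $y<x$. I would split into these two subcases and take $d$ equal to the lower of the two small-block values. Since $x,y\le (c-1)/2$, the value $M+1$ and the value $2d+2$ should both fall below $n+\sigma$, where $\sigma$ is the size of the lower block. Beyond this, the finite check of small $c$ is routine but tedious.
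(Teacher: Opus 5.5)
\textbf{Where the proposal falls short.} Your reduction is the paper's: the gcd step via Proposition~\ref{prop:knk} and Lemma~\ref{lem:rprime}, the case $a+b\mid c$ via Proposition~\ref{prop:abaplusb}, and then Lemma~\ref{lem:ntresult} plus a deficient sequence. The main case, however, is never carried out, and the one concrete choice you do make there fails.

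You leave $u,v$ free, and you take $d$ at the \emph{lower} small block, with $\sigma$ the size of that block. A single small block can never beat $M+1$. Take the natural choice $u=a+b-1$, $v=2a+2b+c-2$. This gives the paper's sequence $(a+b-1+x:a)(a+b-1+y:b)(2a+2b+c-2:c)$, in which the $c$-block sits on top, not in the middle. Then $M+1=2a+2b+c-1$, whereas $n+\sigma\in\{2a+b+c,\ a+2b+c\}$ is at most $M+1$. For instance, for $\{1,3,5\}$ with $(x,y)=(-1,2)$ the sequence is $(2:1)(5:3)(11:5)$, and $d=2$ gives $n+\sigma=10<12=M+1$. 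Your justification via $x,y\le (c-1)/2$ only makes sense if $d$ involves $\max\{x,y\}$.

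The right witness is $d=a+b-1+\max\{x,y\}$. Then $\sigma=a+b$ and $n+\sigma=2a+2b+c$, which exceeds both $2d+2\le 2a+2b+c-1$ and $M+1=2a+2b+c-1$. This is exactly where the upper bounds of Lemma~\ref{lem:ntresult} are used. The Landau inequalities, which you flag as the main obstacle, are then easy. The bounds $x\ge -b$ and $y\ge -a$ handle the first block. Since $\sum_{i<t}s_i-t(t-1)$ is concave on each block, it suffices that it equals $c$ at $t=a+b$ (this is where $ax+by=c$ enters) and $0$ at $t=n$.

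\textbf{Case (b).} No half-integer shifts are needed. Keep integers $x,y$ with $ax+by=2c$, and absorb the extra $c$ by lowering the top value to $2a+2b+c-3$. The parity of $c$ matters only in two places: it forces $2c$ rather than $c$, and in Lemma~\ref{lem:ntresult}(b) it upgrades $x,y<(c+1)/2$ to $x,y\le (c-1)/2$. Then $M+1=2a+2b+c-2$, and the same choice of $d$ works.

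\textbf{The all-even case.} You leave this open with three alternatives, but it needs none of them. If $\gcd(a,b)=2$ and $c$ is even, then $ax+by=c$ is solvable. Lemma~\ref{lem:rmwork}(a) has no gcd hypothesis, and the proof of Lemma~\ref{lem:ntresult}(a) uses $\gcd(a,b)=1$ only for solvability. So the case (a) sequence works verbatim. The paper's written proof silently skips this case too, e.g.\ $\{2,4,10\}$.

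\textbf{The small triple.} The triple with $\gcd(a,b)=2$, $c\le 6$ and $a+b$ not dividing $c$ is $\{2,4,5\}$. The paper's text says $\{2,4,6\}$, which Proposition~\ref{prop:abaplusb} already covers. The case (b) recipe still applies to $\{2,4,5\}$: $(6:2)(7:4)(14:5)$ is deficient with $d=7$.
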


\begin{proof}
Suppose $\{a,b,c\}$ is complete.
Then $\gcd(a,b)\leq 2$; otherwise $\{a,b\}$ is incomplete by Proposition \ref{prop:knk}, and hence $\{a,b,c\}$ is incomplete by Lemma \ref{lem:rprime}.

Suppose that $a+b$ does not divide $c$.
If $\gcd(a,b)=1$ and $c\leq 4$, then $\{a,b,c\}\in\{\{1,2,4\},\{2,3,4\}\}$, and  if $\gcd(a,b)=2$ and $c\leq 6$, then $\{a,b,c\}=\{2,4,6\}$. See Table \ref{tab:counterex} for deficient sequences for each of these partitions.

Suppose $\gcd(a,b)=1$ and $c\geq 5$.
let $x$ and $y$ be distinct integers such that $ax +by = c$, $-b \leq x \leq (c-1)/2$, and $-a \leq y \leq (c-1)/2$, as guaranteed by Lemma \ref{lem:ntresult}.
Without loss of generality, assume $y > x$.
Let 
\begin{equation*}
s = (a+b-1+x:a)(a+b-1+y:b)(2a+2b+c-2:c).
\end{equation*}
A direct computation shows that $s\in\mathfrak{S}^2(\{a,b,c\})$, and since 
\begin{equation*}
2a+2b+c > 2a + 2b +c - 1 = \max\{2a+2b+2y,2a+2b+c-1\},
\end{equation*}
we have that $s$ is deficient by Lemma \ref{lem:handyresult}, with $d=a+b-1+y$.

Suppose $\gcd(a,b)=2$, $c$ is odd, and $c\geq 7$.
Let $x$ and $y$ be distinct integers such that $ax +by = 2c$, $-b \leq x \leq (c-1)/2$, and $-a \leq y \leq (c-1)/2$, as guaranteed by Lemma \ref{lem:ntresult}.
Again, without loss of generality, assume $y > x$.
Let 
\begin{equation*}
s = (a+b-1+x:a)(a+b-1+y:b)(2a+2b+c-3:c).
\end{equation*}
A direct computation shows that $s\in\mathfrak{S}^2(\{a,b,c\})$, and since
\begin{equation*}
2a+2b+c > 2a+2b+c-2 \geq \max\{2a+2b+2y,2a+2b+c-2\},
\end{equation*}
we have that $s$ is deficient by Lemma \ref{lem:handyresult}, with $d=a+b-1+y$.

So $a+b$ must divide $c$.
Therefore, by Proposition \ref{prop:abaplusb}, $\{a,b,c\}=\{1,2,3\}$.
\end{proof}

Let $\Gamma_3$ be the set of partitions 
$\Gamma_3 = \{ \{1^a,2^b,3\}: a\geq 1,\ b\geq 1\}$.
We may then summarize this section with the following theorem.

\begin{theorem}
\label{thm:3ormore}
Let $\lambda$ be a partition of a positive integer $n$.
Suppose $\lambda$ has at least three part distinct  sizes and $\lambda$ is complete.
Then $\lambda\in \Gamma_3$.
\end{theorem}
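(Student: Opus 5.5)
The plan is to reduce to sub-partitions and use the inheritance property, Lemma \ref{lem:rprime}. If $\lambda$ is complete and has at least three distinct part sizes, then for any three distinct part sizes $a<b<c$ occurring in $\lambda$, the sub-partition $\{a,b,c\}$ is complete. Proposition \ref{prop:abc} then forces $\{a,b,c\}=\{1,2,3\}$. So $\lambda$ has exactly three distinct part sizes, namely $1$, $2$ and $3$: a fourth size $d$ would give a triple such as $\{1,2,d\}$ or $\{2,3,d\}$ different from $\{1,2,3\}$. Hence $\lambda=\{1^a,2^b,3^e\}$ with $a,b,e\geq 1$.

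It remains to show $e=1$. Suppose $e\geq 2$. The sub-partition $\{2,3^2\}$ (or $\{1,3^2\}$) has exactly two distinct part sizes, so if it were complete, Theorem \ref{thm:12} would place it in $\Gamma_2$. I would check the families of $\Gamma_2$ one by one. The family $\{\{a,b\}\}$ has both parts of multiplicity one. The family $\{1^a,2^b\}$ has no part $3$. The families $\{1^a,b\}$ and $\{1^2,b\}$ have the larger part with multiplicity one. The families $\{2^a,b\}$, $\{2^2,b\}$ and $\{2^3,5\}$ again have the larger part with multiplicity one. So $\{2,3^2\}\notin\Gamma_2$, hence it is incomplete. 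By Lemma \ref{lem:rprime}, $\lambda$ is then incomplete, a contradiction. Therefore $e=1$ and $\lambda\in\Gamma_3$.

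A cleaner alternative for the $e\geq 2$ step is that $\{3^2\}$ is already incomplete by Proposition \ref{prop:k} (with $k=3$, $n=2$), so Lemma \ref{lem:rprime} immediately rules out $e\geq2$. I would use this, since it avoids scanning $\Gamma_2$. The main point requiring care is that Proposition \ref{prop:abc} applies to every triple of distinct part sizes, and that its hypotheses ($a,b<c$, distinct) are met by ordering the triple so that the largest size plays the role of $c$. Beyond this the argument is bookkeeping, so I expect no real obstacle.
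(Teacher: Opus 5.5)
Your proof is correct and matches the paper's argument: Lemma \ref{lem:rprime} together with Proposition \ref{prop:abc}, applied to any three distinct part sizes, forces the sizes to be exactly $1,2,3$. The incompleteness of $\{3^2\}$ (Proposition \ref{prop:k}) with Lemma \ref{lem:rprime} then rules out a repeated $3$; this is the step the paper uses, while your detour through Theorem \ref{thm:12} and a scan of $\Gamma_2$ is also valid but unnecessary.
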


\begin{proof}
Let $a$, $b$, and $c$ be distinct parts in $\lambda$.
By Lemma \ref{lem:rprime}, $\{a,b,c\}$ is complete, and hence $\{a,b,c\}=\{1,2,3\}$ by Proposition \ref{prop:abc}, and hence the only distinct part sizes in $\lambda$ are $1$, $2$, and $3$.
In addition, since the partition $3^2$ is incomplete, it follows that $\lambda$ cannot contain multiple instances of $3$.
\end{proof}

\subsection{Conjectured classification of complete partitions}

Let $\Gamma = \Gamma_1\cup \Gamma_2\cup \Gamma_3$.
Then we may combine the results of Theorems \ref{thm:12} and \ref{thm:3ormore} with the following.

\begin{theorem}
\label{thm:main}
If $\lambda$ is a complete partition, then $\lambda\in \Gamma$.
\end{theorem}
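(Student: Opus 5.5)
The plan is a case split on the number of distinct part sizes of $\lambda$, reducing each case to an earlier result. Let $\lambda$ be a complete partition of a positive integer $n$, and let $m$ be the number of distinct part sizes in $\lambda$. Since $n\geq 1$, $\lambda$ is nonempty, so $m\geq 1$, and the cases $m=1$, $m=2$ and $m\geq 3$ are exhaustive.

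If $m=1$ or $m=2$, Theorem \ref{thm:12} applies directly. It places $\lambda$ in $\Gamma_1$ when there is one part size, and in $\Gamma_2$ when there are exactly two. If $m\geq 3$, Theorem \ref{thm:3ormore} gives $\lambda\in\Gamma_3$. In every case $\lambda\in\Gamma_1\cup\Gamma_2\cup\Gamma_3=\Gamma$, which is the statement.

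There is no real obstacle, since all the substantive work (the deficient-sequence constructions via Lemma \ref{lem:handyresult} and the inheritance property of Lemma \ref{lem:rprime}) already sits inside Theorems \ref{thm:12} and \ref{thm:3ormore}. The only point to check is that the case split covers every partition, which the argument above confirms.
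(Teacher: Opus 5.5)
Your proposal is correct and is exactly the paper's argument: Theorem~\ref{thm:main} follows by combining Theorem~\ref{thm:12} (one or two distinct part sizes, giving $\Gamma_1$ or $\Gamma_2$) with Theorem~\ref{thm:3ormore} (three or more distinct part sizes, giving $\Gamma_3$), since $\Gamma=\Gamma_1\cup\Gamma_2\cup\Gamma_3$. Your explicit check that these cases on the number of distinct part sizes are exhaustive is the only added step, and it is fine.
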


For positive integers $n$, let $\mathcal{P}_n$ be the set of partitions 
\begin{equation*}
\mathcal{P}_n = \{ \{1^a,2^b,3^\epsilon\}: a\geq 0,\ b\geq 0,\ \epsilon\in\{0,1\}\}.
\end{equation*}
Values of $|\mathfrak{C}(\mathcal{P}_n)|$ and $|\mathfrak{C}(\lambda)|$ for various partitions $\lambda\in \Gamma_2$ are given in Table~\ref{tab:clambda}, and 
in all cases presented in the table, through exhaustive search, we have that $|\mathfrak{S}^2(\mathcal{P}_n)|=|\mathfrak{C}(\mathcal{P}_n)|$ and $|\mathfrak{S}^2(\lambda)|=|\mathfrak{C}(\lambda)|$ for each $\lambda\in\Gamma_2$.
Observe that if $|\mathfrak{S}^2(\mathcal{P}_n)|=|\mathfrak{C}(\mathcal{P}_n)|$, then $|\mathfrak{S}^2(\lambda)|=|\mathfrak{C}(\lambda)|$ for each $\lambda\in \mathcal{P}_n$.
Hence, this numerical evidence leads to the following conjecture which, in turn, implies Conjectures \ref{conj:bebeacua} and \ref{conj:D4Conj}.

\begin{conjecture}
\label{conj:main}
A partition $\lambda$ is complete if and only if $\lambda\in \Gamma$.
\end{conjecture}

\begin{table}
\centering
$\begin{array}[t]{|>{\rule[-5pt]{0pt}{16pt}}c|c|c|c|}
\hline
a & |\mathfrak{C}(\{1^a,4\})| & |\mathfrak{C}(\{1^a,5\})| & |\mathfrak{C}(\{2^a,4\})| \\\hline
1 & 2 & 2 & 4\\\hline
2 & 11 & 14 & 30\\\hline
3 & 52 & 72 & 232\\\hline
4 & 228 & 339 & 1787\\\hline
5 & 932 & 1518 & 13910 \\\hline
6 & 3712 & 6551 \\\cline{1-3}
7 & 14630 & 27480 \\\cline{1-3}
\end{array}\ 
\begin{array}[t]{|>{\rule[-5pt]{0pt}{16pt}}c|c|}
\hline
n & |\mathfrak{C}^2(\mathcal{P}_n)| \\\hline
1 & 1\\\hline
2 & 2\\\hline
3 & 5\\\hline
4 & 15\\\hline
5 & 56\\\hline
6 & 226\\\hline
7 & 992\\\hline
\end{array}\ 
\begin{array}[t]{|>{\rule[-5pt]{0pt}{16pt}}c|c|}
\hline
n & |\mathfrak{C}^2(\mathcal{P}_n)| \\\hline
8 & 4553\\\hline
9 & 21708\\\hline
10 & 106363\\\hline
11 & 533554\\\hline
12 & 2726987\\\hline
13 & 14157710\\\hline
\end{array}$
\caption{A table of values for $|\mathfrak{C}_n(\lambda)|$ for various $\lambda\in\Gamma_2$, as well as $|\mathfrak{C}_n(\mathcal{P}_n)|$ for $n\leq 13$.
For each $\lambda$ presented in the table, $|\mathfrak{C}(\lambda)| = |\mathfrak{S}^2(\lambda)|$, and for each $n\leq 13$, $|\mathfrak{C}_n(\mathcal{P}_n)|=|\mathfrak{S}_n^2(\mathcal{P}_n)|$.}
\label{tab:clambda}
\end{table}

We close by mentioning that a new method of proof will be needed for these conjectures.
Consider the following proposition.

\begin{proposition}
\label{prop:nodef1n}
For all positive integers $n$, $\mathfrak{S}^2(1^n)$ does not contain any deficient sequences.
\end{proposition}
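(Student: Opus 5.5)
We argue by contradiction, using only two facts about a sequence $s=(s_0,\dots,s_{n-1})\in\mathfrak{S}^2(1^n)$. First, $s$ is strictly increasing. Second, by Theorem \ref{thm:klandau} with $k=2$, its partial sums satisfy $\sum_{i<t}s_i\geq t(t-1)$, with total sum $n(n-1)$. Fix $d\in Z_n$, let $\sigma=|\{i:s_i\leq d\}|$, and write $t=\sigma$. Since $s$ is increasing, $s_0,\dots,s_{t-1}\leq d<s_t,\dots,s_{n-1}$. Suppose, for contradiction, that $s$ is deficient at $d$, i.e.\ $n+t>\max\{2d+2,M+1\}$. Because all quantities are integers, this gives $2d\leq n+t-3$ and $M\leq n+t-2$.

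The key step is to bound the sum of the first $t$ terms from above, and the sum of the last $n-t$ terms from above as well, using strict increase.

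\begin{itemize}
\item For the first $t$ terms, $s_i\leq d-(t-1-i)$ for $i<t$. Hence $\sum_{i<t}s_i\leq td-\binom t2$.
\item For the last $n-t$ terms, $s_{n-1-j}\leq M-j$. Hence $\sum_{i\geq t}s_i\leq (n-t)M-\binom{n-t}2$.
\end{itemize}

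Adding the two bounds and using that the total is $n(n-1)$ gives
\begin{equation*}
n(n-1)\leq td-\tbinom t2+(n-t)M-\tbinom{n-t}2 .
\end{equation*}
Now substitute the deficiency bounds $d\leq (n+t-3)/2$ and $M\leq n+t-2$. The coefficients $t$ and $n-t$ are nonnegative, so the substitution only increases the right-hand side.

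The remaining step is a routine expansion, which I expect to be the only real work. After multiplying by $2$ and collecting terms, the inequality should reduce to
\begin{equation*}
0\geq n^2-3nt+3t^2+n-t .
\end{equation*}
The quadratic form $n^2-3nt+3t^2$ has discriminant $9-12<0$, so it is positive whenever $(n,t)\neq(0,0)$. Moreover $n-t\geq 0$ because $t\leq n$. So the right-hand side is strictly positive for every $n\geq1$, which is the desired contradiction.

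Two details need checking. First, the boundary cases $t=0$ and $t=n$: there one of the two sums is empty, and the corresponding bound holds trivially as $0\leq 0$, so the same computation applies. Second, the substitution step uses that the coefficients $t$ and $n-t$ are nonnegative, which holds. Hence no $d$ witnesses deficiency, and $\mathfrak{S}^2(1^n)$ contains no deficient sequence.
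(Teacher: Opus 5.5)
Your proposal is correct and matches the paper's proof: the same upper bounds on the first $\sigma$ terms and the last $n-\sigma$ terms via strict increase, the same substitution of the deficiency bounds $d\leq (n+\sigma-3)/2$ and $M\leq n+\sigma-2$, and the same residual quantity $n^2-3nt+3t^2+n-t$ with $t=\sigma$. The only cosmetic difference is how positivity is shown: the paper writes this quantity as $\frac14\bigl((2n-3\sigma)^2+3\sigma^2+4(n-\sigma)\bigr)$, while you use the negative discriminant of the quadratic form, which amounts to the same thing.
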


\begin{proof}
Let $n$ be a positive integer.
Assume there exists a deficient sequence $s\in \mathfrak{S}^2(1^n)$ for some $d\in Z_n$.
Let $M$ the maximal term in $s$ and $\sigma = |\{ i: s_i \leq d \}|$.
Then $n + \sigma \geq 2d + 3$ and $n + \sigma \geq M + 2$.
So $d \leq (n + \sigma - 3)/2$ and $M \leq n + \sigma - 2$.
Additionally, $\sigma \leq n$.
Therefore, 
\begin{align*}
\sum_{i=0}^{n-1} s_i 
&\leq d + \cdots + (d - (\sigma -1)) + M + \cdots + (M - (n - \sigma - 1))
\\&= \sigma d - \frac{\sigma(\sigma-1)}2+ M(n-\sigma) - \frac{(n-\sigma)(n-\sigma-1)}2
\\&\leq \frac{\sigma (n + \sigma -3)}2 - \frac{\sigma(\sigma-1)}2 + (n + \sigma - 2)(n-\sigma) - \frac{(n-\sigma)(n-\sigma-1)}2
\\&= -\frac18\left(\left(2n-3\sigma\right)^2 + 3\sigma^2 + 4(n - \sigma) \right) + n(n-1)
< n(n-1),
\end{align*}
which is a contradiction.
So $\mathfrak{S}^2(1^n)$ does not contain any deficient sequences.
\end{proof}
Using an argument similar to that which is given in the proof of Proposition \ref{prop:nodef1n}, one can show that for all $\lambda\in \Gamma$, $\mathfrak{S}^2(\lambda)$ contains no deficient sequences.

With the exception of Proposition \ref{prop:aa12a1}, every incompleteness result hinged on the existence of a deficient sequence.
In fact, the sequences presented in the proof of Proposition \ref{prop:aa12a1} demonstrate that a score sequence of a $2$-tournament can both be not deficient and not the characteristic sequence for a permutation.
However, one can show that $\mathfrak{S}^2(\{a,a+1,2a+1)$ with $a\geq 2$ and $\mathfrak{S}^2(\{a,a+2,2a+2)$ with $a\geq 1$ contain deficient sequences. 
For example, for each odd integer $a$,
\begin{equation*}
((5a-1)/2:a)((9a+1)/2:a+1)((9a+3)/2:2a+1)
\end{equation*}
is a deficient sequence in $\mathfrak{S}^2(\{a,a+1,2a+1\})$, and similar sequences can be found for the other cases arising in Proposition \ref{prop:aa12a1}.
However, a partition $\lambda$ can be incomplete while $\mathfrak{S}^2(\lambda)$ has no deficient sequences. 
Observe that while $\{1^5,6\}$ is incomplete by Proposition \ref{prop:1k} and Lemma \ref{lem:rprime}, we have that $\mathfrak{S}^2(\{1^5,6\})$ has no deficient sequences by a similar argument as given in the proof of Proposition~\ref{prop:nodef1n}.
In fact, $\mathfrak{S}^2(\{1^a,6\})$ has no deficient sequences for any $a\geq 5$.

These observations lead us to the following summative result.

\begin{theorem}
Let $\lambda$ be a partition.
If $\lambda\in \Gamma$, then $\mathfrak{S}^2(\lambda)$ contains no deficient sequences.
If $\lambda\notin\Gamma$, then there exists $\lambda'\subseteq\lambda$ such that $\lambda'$ is deficient.
\end{theorem}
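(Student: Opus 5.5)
The statement has two halves, and I would treat them separately. The first half is a generalization of Proposition~\ref{prop:nodef1n}. The second half comes from the case analysis already carried out in Theorems~\ref{thm:12} and~\ref{thm:3ormore}. Throughout, "$\lambda'$ is deficient" means that $\mathfrak{S}^2(\lambda')$ contains a deficient sequence.

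\emph{Second half.} Let $\lambda\notin\Gamma$. Then $\lambda$ is incomplete by Theorem~\ref{thm:main}, but I need a subpartition that admits a deficient sequence, not merely one that is incomplete. I would re-run the proofs of Theorems~\ref{thm:12} and~\ref{thm:3ormore} and record, in each branch, the minimal subpartition $\lambda'\subseteq\lambda$ that was exhibited as incomplete.

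\begin{compactenum}[(i)]
\item For $\{k^2\}$ with $k\ge 3$, the sequence $(k:k)(3k-2:k)$ is deficient.
\item For $\{a,b\}$ with $\gcd(a,b)\ge 3$, Proposition~\ref{prop:knk} gives a deficient sequence.
\item For $\{1^3,k\}$ with $k\ge6$, $\{2^2,2k\}$ with $k\ge 3$, $\{2^3,2k+1\}$ with $k\ge 3$, and $\{2^4,5\}$, Propositions~\ref{prop:1k}, \ref{prop:2k}, \ref{prop:22k+1} and Table~\ref{tab:counterex} give deficient sequences.
\item For three distinct parts with $a+b\nmid c$, Proposition~\ref{prop:abc} produces deficient sequences. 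The cases $\{1,2,4\}$, $\{2,3,4\}$, $\{2,4,6\}$ are handled by the table; for $\{2,4,6\}$ I would need to add an explicit sequence, which is a finite check.
\item For $\{a,b,k(a+b)\}$ other than the families $\{a,a+1,2a+1\}$ and $\{a,a+2,2a+2\}$, Proposition~\ref{prop:abaplusb} gives deficiency.
\end{compactenum}

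The only branch not handled by deficiency is Proposition~\ref{prop:aa12a1}. There the remark after Proposition~\ref{prop:nodef1n} supplies the deficient sequence $((5a-1)/2:a)((9a+1)/2:a+1)((9a+3)/2:2a+1)$ for odd $a$. I would construct analogous sequences for even $a\ge2$ in $\{a,a+1,2a+1\}$ and for all $a\ge1$ in $\{a,a+2,2a+2\}$. The approach is to take three constant blocks whose values are chosen to satisfy the Landau conditions of Theorem~\ref{thm:klandau} with equality at the end, and then to verify the deficiency inequality with $d$ equal to the middle value.

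The remaining work in this half is bookkeeping: every partition outside $\Gamma$ falls into one of the branches above, exactly as in the proofs of Theorems~\ref{thm:12} and~\ref{thm:3ormore}. One subtlety is that $\{1^a,b\}$ with $a\ge3$ and $b\ge6$ is excluded from $\Gamma$ through $\{1^3,b\}$, whose deficient sequence is in hand. In contrast, $\{1^a,b\}$ itself may have no deficient sequence, but the statement only requires a subpartition.

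\emph{First half.} Take $\lambda\in\Gamma$ and suppose $s\in\mathfrak{S}^2(\lambda)$ is deficient with witness $d$. I would imitate Proposition~\ref{prop:nodef1n} exactly: the $\sigma$ terms that are at most $d$ are bounded above, and the remaining $n-\sigma$ terms are at most $M\le n+\sigma-2$, with $d\le(n+\sigma-3)/2$. In Proposition~\ref{prop:nodef1n}, distinctness forced the bounds to decrease by one per term. Here, terms may repeat with multiplicity at most the largest part $p$ of $\lambda$. Thus the sum of the low terms is at most the sum of $\sigma$ values taken greedily from $d,d,\dots$ (each value used at most $p$ times), and similarly for the high terms. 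Comparing this bound with the required total $n(n-1)$ should give a contradiction.

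The real obstacle is that $\Gamma$ contains partitions with one large part, such as $\{a\}$, $\{a,b\}$, $\{1^2,b\}$, $\{2^2,b\}$ and $\{1^a,b\}$ with $b\le5$. For these, the crude multiplicity bound is too weak. I would therefore split into cases. The families with all parts at most $3$ (including $\Gamma_1$ and $\Gamma_3$) work with multiplicity bound $3$ via a quadratic estimate like that in Proposition~\ref{prop:nodef1n}. For partitions with a single big part $b$, I would track separately whether the big block lies below or above $d$, and use the Landau inequality at $t=\sigma$ (the partial-sum condition of Theorem~\ref{thm:klandau}) rather than only the total sum. That sharper use of the partial sums is what I expect to make the two-part families, especially $\{a,b\}$ with $\gcd\le2$, go through.
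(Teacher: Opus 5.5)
Your route is the paper's route. The paper itself offers only this sketch: a count in the style of Proposition~\ref{prop:nodef1n} for $\lambda\in\Gamma$, plus the remark that every incompleteness proof is witnessed by a deficient sequence, with new sequences for Proposition~\ref{prop:aa12a1}. Two steps of your plan fail as written.

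\emph{First half.} The greedy bound with ``every value used at most $p$ times, $p$ the largest part'' is too weak already for $p=3$. Take $n=12$, $\sigma=6$, $d=7$, $M=16$; these satisfy $n+\sigma=18>\max\{2d+2,M+1\}$. The greedy sums are $7+7+7+6+6+6=39$ and $16+16+16+15+15+15=93$, totalling exactly $132=n(n-1)$, so there is no contradiction.

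The first missing idea is that Lemma~\ref{lem:handyresult'} says a characteristic sequence is never deficient. Hence a partition already proved complete has no deficient sequences at all. This settles, with no estimate:
\begin{itemize}
\item $\{a\}$;
\item $\{a,b\}$ with $\gcd(a,b)\le 2$;
\item $\{1^2,b\}$ with $b\ge 6$;
\item $\{2^2,b\}$ with $b\ge5$ odd (for $b=5$ via $\{2^3,5\}$ and Lemma~\ref{lem:rprime});
\item $\{2^3,5\}$ and $\{1,2,3\}$.
\end{itemize}
These are most of the families you single out as the obstacle.

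The second missing idea concerns the remaining members of $\Gamma$: $\{1^a\}$, $\{2^a\}$, $\{1^a,2^b\}$, $\{1^a,2^b,3\}$, $\{2^a,3\}$, $\{2^a,4\}$, and $\{1^a,b\}$ with $3\le b\le 5$. Each has at most one block of size exceeding $2$. Bound each group using the exact shape: put that block at the top of its group, and give every other value multiplicity at most $2$ (exactly $1$ for $\{1^a,b\}$). A short quadratic computation as in Proposition~\ref{prop:nodef1n} then shows the bound is strictly below $n(n-1)$; in the example above one gets $128<132$. For $\{2^a,4\}$ you must also use $d\le (n+\sigma-4)/2$, which holds because $n$ and $\sigma$ are even. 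With only $d\le(n+\sigma-3)/2$ the estimate fails there: for $n=12$, $\sigma=6$ it gives $43+90\ge 132$. No appeal to the partial-sum conditions is needed.

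\emph{Second half.} Your claim that every $\lambda\notin\Gamma$ falls into one of your branches ``exactly as in'' Theorems~\ref{thm:12} and~\ref{thm:3ormore} is false. The case split of Proposition~\ref{prop:abc} that you copy has holes, so this gap is inherited from the paper. Take $a<b<c$:
\begin{itemize}
\item The case $\gcd(a,b)=2$, $c$ even, $a+b$ not dividing $c$ is never treated; $\{2,4,10\}$ is an example.
\item The leftover case with $\gcd(a,b)=2$ and $c\le 6$ is $\{2,4,5\}$, not $\{2,4,6\}$. The latter has $a+b=c$ and is just the $a=2$ member of $\{a,a+2,2a+2\}$, so the extra sequence you plan to add is aimed at the wrong partition.
\end{itemize}
For $\{2,4,5\}$ and $\{2,4,10\}$ every proper subpartition is complete, hence has no deficient sequence. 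So $\lambda'=\lambda$ is forced, and a new sequence is required.

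The repair is easy. The $\gcd(a,b)=1$ construction of Proposition~\ref{prop:abc}, with Lemmas~\ref{lem:ntresult}(a) and~\ref{lem:rmwork}(a), uses only the solvability of $ax+by=c$. It therefore applies verbatim whenever $\gcd(a,b)$ divides $c$; for $\{2,4,10\}$ it gives $(6:2)(7:4)(20:10)$, deficient with $d=7$. For $\{2,4,5\}$, the sequence $(6:2)(7:4)(14:5)$ is deficient in $\mathfrak{S}^2(\{2,4,5\})$ with $d=7$.

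Your plan for Proposition~\ref{prop:aa12a1} does work once the largest block is placed in the middle. Each of the following is deficient with $d$ the middle value:
\begin{itemize}
\item $(5a/2:a)(7a/2:2a+1)(13a/2+2:a+1)$, for $\{a,a+1,2a+1\}$ with $a$ even;
\item $(5a/2+1:a)(7a/2+1:2a+2)(13a/2+5:a+2)$, for $\{a,a+2,2a+2\}$ with $a$ even;
\item $((5a+1)/2:a)((7a+3)/2:2a+2)((13a+9)/2:a+2)$, for $\{a,a+2,2a+2\}$ with $a$ odd.
\end{itemize}
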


With this in mind, the following conjecture is equivalent to Conjecture \ref{conj:main}.

\begin{conjecture}
\label{conj:badsequences}
A partition $\lambda$ is complete if and only if $\mathfrak{S}^2(\lambda')$ has no deficient sequences for all $\lambda'\subseteq \lambda$.
\end{conjecture}

Therefore, if a partition in $\Gamma$ were incomplete, another method of proof would be necessary, as there are no deficient sequences in $\mathfrak{S}^2(\Gamma)$ to demonstrate otherwise.

\bibliographystyle{abbrv}
\bibliography{biblio}

\end{document}